\DeclareMathSymbol\HH 0{AMSb}{`H}
\DeclareMathSymbol\I  0{AMSb}{`I}
\DeclareMathSymbol\R  0{AMSb}{`R}
\theoremstyle{plain}
\newtheorem{theorem}{Theorem}[section]
\newtheorem{lemma}[theorem]{Lemma}
\newtheorem{proposition}[theorem]{Proposition}
\newtheorem{corollary}[theorem]{Corollary}
\newtheorem{definition}[theorem]{Definition}
\theoremstyle{definition}
\newtheorem{question}{Question}[section]
\theoremstyle{remark}
\numberwithin{equation}{section}
\DeclareMathOperator{\acc}{acc}
\DeclareMathOperator{\dom}{dom}
\DeclareMathOperator{\sakne}{root}
\newcommand{\cprec}{\operatorname{<\!\!{\cdot}}}
\newcommand{\ttimes}{{\times}}
\title
{On the bounding, splitting, and distributivity numbers}
\author[A. Dow]{Alan Dow}
\address{University of North Carolina at Charlotte, 
Charlotte, NC 28223}
\email{adow@uncc.edu}
\author[S. Shelah]{Saharon Shelah}
\address{Department of Mathematics, Rutgers University, Hill Center,
 Piscataway, 
 New Jersey, U.S.A. 08854-8019}
\curraddr{Institute of Mathematics\\Hebrew University\\
Givat Ram, Jerusalem 91904, Israel}
\email{shelah@math.rutgers.edu}
\date{\today}
\thanks{
The research of the second  author was
 supported by
   the United States-Israel Binational Science Foundation (BSF Grant
   no. 2010405), and by the 
 NSF grant No. NSF-DMS 1101597.}
\keywords{
  cardinal invariants of the continuum, matrix forcing
}
\subjclass{03E15 }
\begin{document}
\begin{abstract}
The cardinal invariants $  \mathfrak h, \mathfrak b,
 \mathfrak s$ of $\mathcal P (\omega)$ are known to satisfy
that $\omega_1 \leq \mathfrak h
 \leq\min\{\mathfrak b, \mathfrak s\}$. We prove
 that all inequalities can be strict. 
 We also introduce a new upper bound for $\mathfrak h$
and show that it can be less than $\mathfrak s$. 
 The key method is to utilize finite support
matrix iterations of ccc posets following \cite{BlassShelah}.
\end{abstract}
\maketitle

\bibliographystyle{plain}

\section{Introduction}

Of course the cardinal invariants of the continuum discussed in this
article are very well known (see 
\cite{vDHandbook}*{van Douwen, p111})
so we just give a brief reminder.  They deal with the mod finite 
ordering of the infinite subsets of the integers. We follow
convention and let $[\omega]^\omega$ (or $[\omega]^{\aleph_0}$)
denote the family of infinite subsets of $\omega$.
A set $A$ is a pseudo-intersection of a family
$\mathcal Y\subset 
[\omega]^\omega $ if $A$ is infinite and
$A\setminus Y$ is finite for all $Y\in \mathcal
Y$.  The family $\mathcal Y$ has the strong finite intersection property
 (sfip) if every finite subset has infinite intersection and
 $\mathfrak p$ is the minimum cardinal for which there is such
 a family with no pseudointersection. 
 A family $\mathcal I\subset \mathcal
 P(\omega)$ is an ideal if it is closed under finite unions and mod
 finite subsets. An ideal $\mathcal I \subset \mathcal P(\omega)$ is 
 dense if every  $Y\in [\omega]^\omega$ contains an infinite member
of $\mathcal I$. 
A set $S\subset \omega$
 is \textit{unsplit} by a family $\mathcal Y\subset [\omega]^\omega$ 
if $S$ is mod finite contained in one member of $\{ Y, \omega\setminus
Y\}$ for each $Y\in \mathcal Y$. 
The splitting number
 $\mathfrak s$ is the minimum cardinal of a family $\mathcal Y$ for
 which there is no infinite set unsplit by $\mathcal Y$ (i.e. 
every  $S\in [\omega]^\omega$  is \textit{split} by some member of
$\mathcal Y$ and $\mathcal Y$ is called a 
splitting family). 
The bounding number
 $\mathfrak b$ can easily be defined in these same terms, but it is
 best defined by the mod finite ordering, $<^*$, on the family of
 functions $\omega^\omega$. The cardinal $\mathfrak b$ is the minimum
 cardinal for which there is a $<^*$-unbounded  family $B
\subset \omega^\omega$ with $|B| = \mathfrak b$. 

The finite support iteration of the 
standard Hechler poset was shown in \cite{BaumDordal} to
 produce models 
 of $\aleph_1 = \mathfrak s < \mathfrak b$. 
 The consistency of $\aleph_1 = \mathfrak b <
 \mathfrak s=\aleph_2$ was established
  in \cite{Boulder}  with a countable support iteration
  of a special poset we now call $\mathcal Q_{Bould}$.
It is shown in \cite{FischerSteprans} that
 one can use Cohen forcing to select ccc subposets of 
 $\mathcal Q_{Bould}$ and finite support iterations
 to obtain  models of
 $\aleph_1< \mathfrak b   < \mathfrak s = \mathfrak b^+$. 
This result was improved in   \cite{BrendleFischer}
to show that the gap between $\mathfrak b$
and $\mathfrak s$ can be made arbitrarily large. 
The papers
\cite{BlassShelah} and \cite{BrendleFischer}
are able to use  ccc versions of the well-known
Mathias forcing in their iterations in place of
those  discovered in \cite{FischerSteprans}. 
The paper \cite{BrendleFischer} also nicely expands on the
method  of matrix iterated forcing first introduced
in \cite{BlassShelah}, as do a number of more recent
papers (see \cites{Fischer18, Diego13}
and 
\cite{Fischer17} using template forcing).
 The distributivity number (degree)
 $\mathfrak h$ was first studied in \cite{BPS80}. It equals
  the minimum number of dense ideals whose
 intersection is simply the Fr\'echet ideal $[\omega]^{<\omega}$. 
 It was shown in \cite{BPS80}, that 
 $\mathfrak p\leq \mathfrak h\leq \min\{\mathfrak b, \mathfrak s\}$.
 Our goal is to separate all these cardinals. 
 We succeed but confront  a new problem since we   
 use the result, also from \cite{BPS80}, 
 that $\mathfrak h\leq \operatorname{cf}(\mathfrak c)$.

\section{A new bound on $\mathfrak h$}
In \cite{BPS80}, a family $\mathfrak A$ of maximal almost disjoint families
of infinite subsets of $\omega$ is called a matrix. A matrix $\mathfrak A$
is   \textit{shattering\,} if the entire collection $\bigcup\mathfrak A$ is splitting. 
Evidently, if $\{ s_\alpha : \alpha < \kappa\}$ is  a splitting family,
then the family $\mathfrak A = \{ \{s_\alpha,\omega\setminus s_\alpha \} : 
\alpha <\kappa\}$ is a shattering matrix.   A shattering matrix
 $\mathfrak A = \{ \mathcal A_\alpha : \alpha <\kappa\}
 $ is \textit{refining\/}, if for all $\alpha < \beta <\kappa$,
  $\mathcal A_\beta$ refines $ \mathcal A_\alpha$ in the natural sense that 
  each member of $\mathcal A_\beta$ is mod finite contained in
  some member of $\mathcal A_\alpha$. Finally, a \textit{base matrix} 
  is a refining shattering matrix $\mathfrak A$ satisfying
  that $\bigcup\mathfrak A$ is dense in $(\mathcal P(\omega)
  /\operatorname{fin}, \subset^*)$ (i.e. a $\pi$-base for $\omega^*$).

We add condition (6) to the following result from \cite{BPS80}.

\begin{lemma} The\label{newh}
 value of $\mathfrak h$ is the least cardinal 
$\kappa$ such that any of the following hold:
\begin{enumerate}
\item the Boolean algebra $\mathcal P(\omega)/\operatorname{fin}$ is not 
$\kappa$-distributive,
\item there is a shattering matrix of cardinality $\kappa$,
\item there is a shattering and refining matrix indexed by $\kappa$,
\item there is a base matrix of cardinality $\kappa$,
\item there is a family of $\kappa$ many nowhere dense subsets of $\omega^*$ whose union is dense,
\item there is a sequence $\{\mathcal S_\alpha : \alpha <\kappa\}$ of splitting families satisfying that no 1-to-1 selection $\langle s_\alpha : \alpha\in \kappa\rangle \in \Pi\{\mathcal S_\alpha : \alpha\in\kappa\}$ has a pseudo-intersection.  
\end{enumerate} 
\end{lemma}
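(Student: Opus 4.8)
The plan is to show that the least cardinal $\kappa$ for which (6) holds equals the least cardinal for which (1)--(5) hold; since the equivalence of (1)--(5) (and that their common least value is $\mathfrak h$) is exactly \cite{BPS80}, it suffices to prove the two inequalities $\min\{\kappa:\text{(6)}\}\le\mathfrak h$ and $\min\{\kappa:\text{(6)}\}\ge\mathfrak h$. For the first I will produce, from a base matrix (available by (4)), a sequence of $\mathfrak h$ splitting families admitting no $1$-to-$1$ selection with a pseudo-intersection; for the second I will show that for $\kappa<\mathfrak h$ \emph{every} sequence of $\kappa$ splitting families admits such a selection, so (6) fails at $\kappa$.

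For the upper bound, fix a base matrix $\mathfrak A=\{\mathcal A_\alpha:\alpha<\mathfrak h\}$ and set $\mathcal S_\alpha:=\bigcup_{\beta\ge\alpha}\mathcal A_\beta$. First, each tail $\{\mathcal A_\beta:\beta\ge\alpha\}$ is again a refining shattering matrix: if some infinite $A$ were almost contained in a member of every $\mathcal A_\beta$ with $\beta\ge\alpha$, then by the refining property it would be almost contained in a member of $\mathcal A_\gamma$ for every $\gamma<\alpha$ too, and a set almost contained in a member of each level of a matrix is split by no member of it (the other members of that level are almost disjoint from it), contradicting that $\mathfrak A$ is shattering. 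Hence each $\mathcal S_\alpha$ is splitting. Now if $\langle s_\alpha:\alpha\in\mathfrak h\rangle\in\Pi\{\mathcal S_\alpha:\alpha\in\mathfrak h\}$ had a pseudo-intersection $A$, writing $s_\alpha\in\mathcal A_{f(\alpha)}$ with $f(\alpha)\ge\alpha$ we see $\operatorname{ran}(f)$ is cofinal in $\mathfrak h$ (as $\mathfrak h$ is regular), so $A$ is almost contained in a member of $\mathcal A_\gamma$ for cofinally many $\gamma$, hence—again by refining—for every $\gamma<\mathfrak h$, contradicting that $\mathfrak A$ is shattering. Thus no selection at all (a fortiori no $1$-to-$1$ selection) from $\langle\mathcal S_\alpha\rangle$ has a pseudo-intersection, so $\min\{\kappa:\text{(6)}\}\le\mathfrak h$.

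For the lower bound, fix $\kappa<\mathfrak h$ and splitting families $\langle\mathcal S_\alpha:\alpha<\kappa\rangle$. For $\alpha<\kappa$ put $D_\alpha:=\{X\in\mathcal P(\omega)/\operatorname{fin}:\lvert\{s\in\mathcal S_\alpha:X\subset^* s\}\rvert\ge\kappa\}$; each $D_\alpha$ is downward closed, since shrinking $X$ only enlarges the set of members of $\mathcal S_\alpha$ that almost contain it. Granting that each $D_\alpha$ is dense, the $\kappa$-distributivity of $\mathcal P(\omega)/\operatorname{fin}$ (which holds by (1), as $\kappa<\mathfrak h$) makes $\bigcap_{\alpha<\kappa}D_\alpha$ dense; choose an infinite $A$ in it. Build $\langle s_\alpha:\alpha<\kappa\rangle$ by recursion, taking $s_\alpha\in\{s\in\mathcal S_\alpha:A\subset^* s\}\setminus\{s_\beta:\beta<\alpha\}$, which is nonempty because the first set has size $\ge\kappa>\lvert\alpha\rvert$. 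This is a $1$-to-$1$ selection from $\langle\mathcal S_\alpha\rangle$ with pseudo-intersection $A$, so (6) fails at $\kappa$ and $\min\{\kappa:\text{(6)}\}\ge\mathfrak h$.

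The step I expect to be the main obstacle is the density of $D_\alpha$, i.e.\ the following further characterization of $\mathfrak h$: \emph{for $\kappa<\mathfrak h$, every splitting family $\mathcal S$, and every infinite $Y$ there is an infinite $X\subset^* Y$ almost contained in at least $\kappa$ members of $\mathcal S$.} For the base-matrix family $\bigcup\mathfrak A$ this is immediate—a set in level $\gamma$ is almost contained in one member of every lower level—but for an arbitrary splitting family it requires a construction in the spirit of the base-tree theorem of \cite{BPS80}. One builds inside $Y$ a $\subset^*$-decreasing sequence of infinite sets carrying the required almost-containments: at a successor step one splits the current set $B$ by a member $s$ of $\mathcal S$ and passes to $B\cap s$, thereby acquiring the almost-containment $B\cap s\subset^* s$, which is fresh because $s$ splits $B$ (so $B$ itself is not almost contained in $s$) while every previously used member of $\mathcal S$ almost contains $B$; at a limit step one must keep the sequence from becoming a tower, which is handled exactly as at limit levels of the base-tree construction—using distributivity to pass to a common refinement and re-anchoring the retained branch to a surviving pseudo-intersection—this being the delicate point. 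Combining the two inequalities yields $\mathfrak h=\min\{\kappa:\text{(6)}\}$, which is what the addition of (6) to the list of \cite{BPS80} asserts.
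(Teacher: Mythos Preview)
Your upper bound is the paper's argument (the paper starts from (3) rather than (4), but a base matrix is in particular a refining shattering matrix, so this is cosmetic). For the lower bound the paper takes a different route: instead of showing that (6) fails for every $\kappa<\mathfrak h$, it shows that (6) at $\kappa$ together with $\kappa\le\mathfrak h$ yields (3) at $\kappa$. Concretely, it builds a refining matrix $\{\mathcal A_\alpha:\alpha<\kappa\}$ in which each $a\in\mathcal A_\alpha$ carries a label $s_\alpha(a)\in\mathcal S_\alpha$ with $a\subset s_\alpha(a)$, chosen fresh along its branch (this is exactly your ``freshness'' observation, proved there via the fact that fewer than $\mathfrak h$ sets cannot shatter); the hypothesis on $1$-to-$1$ selections is then precisely what forces $\bigcup_\alpha\mathcal A_\alpha$ to be splitting. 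Your route---show each $D_\alpha$ dense, intersect by distributivity, then recursively read off a selection---is also correct and makes the distributivity more visible, at the cost of doing essentially the same matrix construction inside the density proof and then layering a second use of distributivity on top. The paper's single construction is more economical; yours separates concerns more cleanly.

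On your density sketch: you correctly flag that a single $\subset^*$-chain can become a tower before reaching length $\kappa$ (one cannot simply cite $\mathfrak t$ here), and that the cure is the base-tree idea of carrying a refining system of MAD families rather than one branch---but as written the sketch oscillates between ``a $\subset^*$-decreasing sequence'' and ``the base-tree construction,'' which are different objects. A tidier packaging that avoids this tension: prove by induction on cardinals $\mu\le\kappa$ that $E_\mu:=\{X:\lvert\{s\in\mathcal S_\alpha:X\subset^* s\}\rvert\ge\mu\}$ is dense open in $\mathcal P(\omega)/\mathrm{fin}$. Successor steps are your freshness observation (any $s\in\mathcal S_\alpha$ splitting $X$ is a new almost-superset of $X\cap s$), and at limit $\mu$ one has $E_\mu=\bigcap_{\nu<\mu}E_\nu$, dense by distributivity since $\mu<\mathfrak h$. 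Then $D_\alpha=E_\kappa$.
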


\begin{proof}
 Since (1)-(5) are proven  in \cite{BPS80}, it is sufficient
 to prove that, for a cardinal $\kappa$,  (3) and (6) are equivalent. 
 First suppose that $\mathfrak A = \{\mathcal A_\alpha : \alpha < \kappa\}$ is a 
 refining and shattering matrix. Since the matrix is refining, it follows easily
 that, 
 for each $\alpha<\kappa$,  $\{ \mathcal A_\beta : \alpha \leq \beta < \kappa\}$ 
 is a shattering matrix.  Therefore,
for each $\alpha<\kappa$, 
  $\mathcal S_\alpha=
 \bigcup \{ \mathcal A_\beta : \alpha \leq \beta\}$ is a splitting family.
 Similarly, the refining property ensures that
if $\langle a_\alpha : \alpha \in\kappa\rangle \in \Pi\{\mathcal S_\alpha : \alpha \in
 \kappa\}$, then $\{ a_\alpha :\alpha\in\kappa\}$ has no pseudo-intersection. 
 
 Now assume that $\{\mathcal S_\alpha : \alpha <\kappa\}$ is a sequence
 of splitting families as in (6). By \cite{BPS80}, it is sufficient to prove
 that $\mathfrak h\leq \kappa$, so let us assume that $\kappa\leq\mathfrak h$. 
 We now make an observation about $\kappa$:  for each infinite
  $b\subset\omega$, $\alpha<\kappa$ 
  and family $\mathcal S' \subset[\omega]^\omega$ of cardinality
  less than $\kappa$, 
  there is  an infinite $a\subset b$ and 
  an $s \in \mathcal S_\alpha
  \setminus \mathcal S'$ such that $a\subset s$ and
  $s $ splits $b$.
 We prove this claim.   
 We may ignore all members of $\mathcal S'$ that
  are mod finite disjoint, or mod finite include, $b$.
 Since the family $\{ \{ s'\cap b, b\setminus s'\} : 
 s'\in \mathcal S'\} $ is not shattering (as a family of subsets
 of $b$) there is an infinite $b'\subset b$ that is not split by $\mathcal S'$. 
 Choose any $s\in \mathcal S_\alpha$ that splits $b'$ 
 and let $a = s\cap b'$. Evidently,
  $s$ also splits $b$.  Since the ideal generated by a splitting family is dense, 
  we may choose a maximal almost disjoint family
   $\mathcal A_0$ contained in the ideal generated by $\mathcal S_0$.
 Let $s_0 $ denote any mapping from $\mathcal A_0$ 
  into $\mathcal S_0$ satisfying that $a\subset s_0(a)$ for all
   $a\in \mathcal A_0$. Suppose that $\alpha <\kappa$ and that
   we have chosen a refining 
   sequence $\{\mathcal A_\gamma : \gamma<\alpha\}$ 
   of maximal almost disjoint families together with mappings
    $\{s_\gamma : \gamma<\alpha\}$ so that for each
     $a\in \mathcal A_\gamma $ ,
    $a\subset s_\gamma(a)\in \mathcal S_\gamma$.
 The extra induction assumption is that for all $a\in \mathcal A_\gamma$,
   $s_\gamma(a)$ is not an element of
    $\{ s_\beta (a') : \beta <\gamma \ \mbox{and} \ 
    a\subset^* a'\in \mathcal A_\beta\}$. The existence of the family
     $\mathcal A_\alpha$ and the mapping $s_\alpha$ 
     satisfying the induction conditions easily
     follows from the above Observation. Now we verify
     that $\mathfrak A = \{ \mathcal A_\alpha : \alpha < \kappa\}$ 
     satisfies that $\bigcup\mathfrak A$ is splitting.
   Fix any infinite $b\subset\omega$
   and choose $a_\alpha\in \mathcal A_\alpha$, for each $\alpha\in\kappa$
   so that $b\cap a_\alpha$ is infinite. By construction,
    $\{ s_\alpha(a_\alpha) : \alpha\in \kappa\}$ is a 1-to-1 selection
    from $\Pi\{\mathcal S_\alpha : \alpha\in \kappa\}$. Since $b$ is therefore
    not a pseudo-intersection, there is an $\alpha<\kappa$ such
    that $b\setminus s_\alpha(a_\alpha)\subset
    b\setminus a_\alpha$ is infinite. 
\end{proof}

The following is an immediate corollary to condition (6) in Lemma \ref{newh}
and provide two approaches to bounding the value of $\mathfrak h$.
 
\begin{corollary} [\cites{BPS80,Bla89}]\label{2.2}
\begin{enumerate}
\item 
 $\mathfrak h\leq\operatorname{cf}(\mathfrak c)$.
\item A poset $\mathbb P$  forces that $\mathfrak h\leq \kappa$  if
 $\mathbb P$ preserves 
  $\kappa$ and   can be written as an increasing chain 
 $\{\mathbb P_\alpha :\alpha < \kappa\}$ of completely embedded posets
 satisfying that each $\mathbb P_{\alpha+1}$ adds a real not added by
  $\mathbb P_{\alpha}$.
  \end{enumerate}
\end{corollary}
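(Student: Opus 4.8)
The plan is to read both statements off condition (6) of Lemma~\ref{newh} --- equivalently, off the existence of a refining shattering matrix (condition (3)), since from such a matrix $\langle\mathcal A_\alpha:\alpha<\kappa\rangle$ one obtains the required sequence of splitting families by setting $\mathcal S_\alpha:=\bigcup\{\mathcal A_\beta:\alpha\le\beta\}$, exactly as in the proof of Lemma~\ref{newh}. Thus in each case it suffices to produce, in the relevant universe, a refining shattering matrix of cardinality $\kappa$ (or equivalently a base matrix of that cardinality).

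Item (1) is the theorem of \cite{BPS80} (see also \cite{Bla89}). The mechanism: with $\kappa=\operatorname{cf}(\mathfrak c)$, fix an increasing continuous cofinal sequence $\langle\mu_\alpha:\alpha<\kappa\rangle$ in $\mathfrak c$ and an enumeration of $[\omega]^\omega$, and build a refining sequence $\langle\mathcal A_\alpha:\alpha<\kappa\rangle$ of maximal almost disjoint families so that at stage $\alpha$ every one of the first $\mu_\alpha$ sets is split by $\bigcup\{\mathcal A_\beta:\beta\le\alpha\}$ (i.e.\ is almost-contained in no single member of it). Since $\mathfrak c=\sup_\alpha\mu_\alpha$, every infinite set is eventually split, so $\bigcup_\alpha\mathcal A_\alpha$ is shattering; at limits one takes common refinements, which remain maximal almost disjoint as long as the stage is below $\mathfrak h$, and if a common refinement ever fails to exist one has already exhibited fewer than $\kappa$ maximal antichains with no common refinement, whence $\mathfrak h\le\kappa$ directly.

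For item (2), work in $V[G]$ with $G$ a $\mathbb P$-generic filter, and put $G_\alpha:=G\cap\mathbb P_\alpha$, a $\mathbb P_\alpha$-generic. First reduce to the situation of (1). Since $\kappa$ is preserved --- hence still regular of uncountable cofinality --- and each $\mathbb P_\alpha$ is a complete subposet of $\mathbb P=\bigcup_{\beta<\kappa}\mathbb P_\beta$, every real of $V[G]$ already lies in some $V[G_\alpha]$: a real is decided coordinatewise by countably many conditions of $\mathbb P$, each of which lies in some $\mathbb P_{\alpha_n}$, and $\sup_n\alpha_n<\kappa$. Hence $([\omega]^\omega)^{V[G]}=\bigcup_{\alpha<\kappa}([\omega]^\omega)^{V[G_\alpha]}$ is an increasing union of length $\kappa$ which, by the hypothesis that each $\mathbb P_{\alpha+1}$ adds a real not in $V[G_\alpha]$, is strictly increasing; so the models $V[G_\alpha]$ $\kappa$-approximate the continuum of $V[G]$ just as $\langle\mu_\alpha\rangle$ does in (1). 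Now rerun the construction of (1) inside $V[G]$, handling $([\omega]^\omega)^{V[G_\alpha]}$ at stage $\alpha$: build a refining sequence $\langle\mathcal A_\alpha:\alpha<\kappa\rangle$ of maximal almost disjoint families of $V[G]$ with $\mathcal A_{\alpha+1}$ refining $\mathcal A_\alpha$ and splitting every member of $([\omega]^\omega)^{V[G_\alpha]}$, limits as before. Since the chunks exhaust $([\omega]^\omega)^{V[G]}$, the resulting matrix is shattering, hence a refining shattering matrix of cardinality $\kappa$, and Lemma~\ref{newh} gives $\mathfrak h^{V[G]}\le\kappa$.

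The main obstacle is the successor step --- refining a given maximal almost disjoint family to a maximal almost disjoint refinement that splits an entire prescribed family of infinite sets. In (1) this is the refinement lemma of \cite{BPS80}, which exploits that the prescribed family has size $<\mathfrak c$. In (2) the prescribed family is $([\omega]^\omega)^{V[G_\alpha]}$, which need not be small in $V[G]$; here the point is that a single real $r\in V[G_{\alpha+1}]\setminus V[G_\alpha]$ yields, via the bijections $\omega\to b$ available in $V[G_\alpha]$, a splitter for every $b\in([\omega]^\omega)^{V[G_\alpha]}$ simultaneously, and one must organise these into one maximal almost disjoint refinement. This is precisely where the clause that each $\mathbb P_{\alpha+1}$ adds a new real is used, and it is the delicate step.
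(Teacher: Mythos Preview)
Your route through condition~(3) works in principle but is a detour; the paper applies condition~(6) of Lemma~\ref{newh} \emph{directly}, without ever building MAD families. For~(1) it sets $\mathcal S_\alpha=\{x_\xi:(\forall\eta<\kappa_\alpha)\ x_\eta\not\subset^* x_\xi\}$, where $\{x_\xi:\xi<\mathfrak c\}$ enumerates $[\omega]^\omega$ and $\langle\kappa_\alpha:\alpha<\operatorname{cf}(\mathfrak c)\rangle$ is cofinal in~$\mathfrak c$; for~(2) it sets $\mathcal S_\alpha=\{x\in[\omega]^\omega:x\text{ contains no infinite }y\in V[G_\alpha]\}$. In each case a short tree argument shows $\mathcal S_\alpha$ is splitting (any infinite $b$ carries continuum-many almost disjoint branch-subsets, enough of which lie in~$\mathcal S_\alpha$), and no selection can have a pseudo-intersection~$b$, since $b$ is excluded at the first~$\alpha$ with $b\in\{x_\eta:\eta<\kappa_\alpha\}$, respectively $b\in V[G_\alpha]$. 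The ``delicate step'' you flag --- organising splitters for all of $([\omega]^\omega)^{V[G_\alpha]}$ into a single MAD refinement --- is thus bypassed entirely, and you do not actually carry it out. If you insist on your route it can be completed: the same family $\mathcal S_\alpha$ is dense and downward-closed in $([\omega]^\omega,\subset^*)$, so for each $a\in\mathcal A_\alpha$ any maximal almost disjoint subfamily of $\mathcal S_\alpha\cap[a]^\omega$ is already MAD on~$a$, and the union of these over $a\in\mathcal A_\alpha$ is the desired $\mathcal A_{\alpha+1}$. But this is exactly rebuilding the paper's $\mathcal S_\alpha$ and then doing extra work on top of it.
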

\begin{proof}
  For the statement in (1),
  let  $\{\kappa_\alpha :\alpha <  \operatorname{cf}(\mathfrak c)\}$
 be increasing and cofinal in $\mathfrak{c}$.
Let $\{ x_\xi : \xi \in \mathfrak c\}$ be an enumeration of 
  $[\omega]^{\aleph_0}$. To apply (6) from Lemma \ref{newh},
let $\mathcal S_\alpha =
\{ x_\xi :(\forall\eta<\kappa_\alpha)~ x_\eta\not\subset^* 
x_\xi \}$. Since every infinite $Y\subset\omega$ can be
refined by an almost disjoint family of cardinality $\mathfrak c$,
 it follows that $\mathcal S_\alpha$ is splitting.
   For the statement in (2), let $G$
  be a $\mathbb P$-generic filter and, for each $\alpha\in\kappa$,
   let $G_\alpha = G\cap \mathbb P_\alpha$. To apply (6),
    let $\mathcal S_\alpha$ be the set of $x\in [\omega]^{\aleph_0}$ that
    contain no
    infinite $y\in V[G_\alpha]$. To see that $\mathcal S_\alpha$ is splitting
     in either case,  given any infinite $x\subset \omega$,
    consider an enumeration $\{ x_t : t\in 2^{<\omega}\}$.
    Then, for all $\alpha\in \kappa$, there is an $f_\alpha \in 2^\omega$
    so that $\{ x_{f_\alpha\restriction n} : n\in \omega\}\in \mathcal S_\alpha$.     
\end{proof}
Our introduction of condition (6) in Lemma \ref{newh} is motivated
by the fact that
it provides us with a new approach to bounding $\mathfrak h$.
We introduce the following variant of condition (6) in Lemma \ref{newh}
and note that a shattering refining matrix will
 fail to satisfy the second condition.

\begin{definition}
 Let $\kappa <\lambda$ be cardinals\label{kappalambda}
  and say that a family
  $\{ x_\alpha : \alpha < \lambda\}$ of infinite subsets of $\omega$
   is $(\kappa,\lambda)$-shattering
  if, for all infinite $b\subset \omega$
\begin{enumerate}
\item the set $\{\alpha <\lambda: b\subset^* x_\alpha\}$
  has cardinality less than
 $\kappa$, 
  and
  \item the set $\{  \alpha < \lambda : b\cap x_\alpha =^*\emptyset\}$ 
  has cardinality less than $\lambda$.
\end{enumerate}
Say that a $(\kappa,\lambda)$-shattering
family is strongly
 $(\kappa,\lambda)$-shattering if it contains no splitting
 family of size less than $\lambda$.
\end{definition}
 
 Needless to say a $(\kappa,\lambda)$-shattering family
 is strongly $(\kappa,\lambda)$-shattering if $\lambda
 =\mathfrak s$ and this is the kind of families we are interested in.
  However it seems likely that producing strongly $(\kappa,\lambda)$-shattering
  families would be interesting (and as difficult)
  even without requiring
  that $\lambda=\mathfrak s$. 
  Nevertheless $\mathfrak s$ is necessarily
  bounded by  $\lambda$ as we show next.

\begin{proposition}
 If there is a $(\kappa,\lambda)$-shattering family, then 
  $\mathfrak h \leq \kappa$ and $\mathfrak s \leq \lambda$. 
\end{proposition}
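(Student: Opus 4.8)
The plan is to establish the two inequalities separately, both straight from Definition~\ref{kappalambda}: the bound $\mathfrak s\le\lambda$ by showing that the given family is itself splitting, and the bound $\mathfrak h\le\kappa$ by feeding that same family into condition~(6) of Lemma~\ref{newh} as a \emph{constant} sequence of splitting families. Throughout, write $X=\{x_\alpha:\alpha<\lambda\}$ for the given $(\kappa,\lambda)$-shattering family.

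First I would verify that $X$ is a splitting family, which immediately yields $\mathfrak s\le |X|\le\lambda$. Fix an infinite $b\subseteq\omega$. An index $\alpha<\lambda$ fails to split $b$ only when $b\subset^* x_\alpha$ or $b\cap x_\alpha=^*\emptyset$; by clauses (1) and (2) of Definition~\ref{kappalambda} the set of $\alpha$ of the first kind has cardinality $<\kappa$ and the set of the second kind has cardinality $<\lambda$, so — using $\kappa<\lambda$ — the union of the two has cardinality $<\lambda$. Hence some $x_\alpha$ splits $b$. (As a side remark, applying clause (1) to $b=x_\alpha$ shows each value of the enumeration is attained fewer than $\kappa$ times, so in fact $|X|=\lambda$; this is not needed but confirms $|X|\ge\kappa$.)

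For $\mathfrak h\le\kappa$ I would apply condition~(6) of Lemma~\ref{newh} with $\mathcal S_\gamma:=X$ for every $\gamma<\kappa$. Each $\mathcal S_\gamma$ is splitting by the previous step. Let $\langle s_\gamma:\gamma<\kappa\rangle\in\Pi\{\mathcal S_\gamma:\gamma<\kappa\}$ be any $1$-to-$1$ selection, so that $\{s_\gamma:\gamma<\kappa\}$ is a set of $\kappa$ pairwise distinct members of $X$. If some infinite $b$ were a pseudo-intersection of it, then $b\subset^* s_\gamma$ for all $\gamma$; writing each $s_\gamma$ as $x_{\alpha_\gamma}$ we would get $\kappa$ many indices with $b\subset^* x_{\alpha_\gamma}$, contradicting clause~(1). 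So no $1$-to-$1$ selection has a pseudo-intersection, and Lemma~\ref{newh}(6) gives $\mathfrak h\le\kappa$.

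The only genuine idea is the observation that clause~(1) plays exactly the role that the ``refining'' hypothesis plays in the proof of Lemma~\ref{newh}: one need not produce $\kappa$ honestly different splitting families, but may recycle a single family $\kappa$ times, since (1) already forbids any infinite set from lying mod finite below $\kappa$ of its members. I expect the only point requiring care is the trivial cardinal arithmetic in the second paragraph — that a set of size $<\kappa$ together with a set of size $<\lambda$ still has size $<\lambda$ — which uses $\kappa<\lambda$ and remains valid even when $\lambda$ is singular because $|A|+|B|=\max(|A|,|B|)$ for infinite cardinals.
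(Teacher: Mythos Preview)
Your proof is correct and follows essentially the same approach as the paper's: first observe that the family is splitting (hence $\mathfrak s\le\lambda$), then apply Lemma~\ref{newh}(6) with the constant sequence $\mathcal S_\alpha=\mathcal S$ for all $\alpha<\kappa$, using clause~(1) of Definition~\ref{kappalambda} to rule out pseudo-intersections of $1$-to-$1$ selections. Your write-up simply spells out the details (the cardinal arithmetic and the passage from distinct sets to distinct indices) that the paper leaves implicit.
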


\begin{proof}
 Let $\mathcal S = \{ x_\alpha : \alpha <\lambda\}$ be a
  $(\kappa,\lambda)$-shattering family. Given any infinite
   $b\subset\omega$, there is a $\beta<\lambda$ such that
   each of $b\subset^* x_\beta$ and $b\cap x_\beta=^*\emptyset$ fail.
    This means that $\mathcal S$ is splitting. By condition (1) in
 Definition \ref{kappalambda} and 
 applying condition (6) of Lemma \ref{newh} 
 with $\mathcal S_\alpha = \mathcal S$ 
 for all $\alpha<\kappa$, it follows that $\mathfrak h\leq \kappa$.
\end{proof}

   For any index set $I$  the standard poset for adding Cohen reals,
    $\mathcal C_I$, is the set of all finite functions into $2$ with 
    domain a finite subset of 
    $I$ where $p<q$ providing $p\supset q$. If $\lambda$ is an ordinal,
     then we may use $\dot x_\alpha$ to be the canonical 
     $\mathcal C_\lambda$-name $\{ (\check n, 
     \{\langle\alpha{+} n,1\rangle\} : n\in \omega\}$ (i.e.
      for $s\in\mathcal C_{\lambda}$, $s\Vdash n\in \dot x_\alpha$
      providing $s(\alpha+n)=1$).

      It is routine to verify that, for any regular cardinal $\lambda
      >\aleph_1$,
       forcing with $\mathcal C_{\lambda}$ will naturally add
       an $(\aleph_1,\lambda)$-shattering family
       but is is clear that 
       this  family
       would not be strongly $(\aleph_1,\lambda)$-shattering
       because it has a splitting subfamily of
       cardinality $\aleph_1$.
Nevertheless, it may be possible with further forcing,
to  have it become strongly $(\kappa,\lambda)$-shattering
for some $\aleph_1\leq \kappa<\mathfrak s$.

In Theorem \ref{noth} we will prove that it is consistent 
with $\aleph_2 < \kappa^+ < \mathfrak c$ 
that there is a strongly $(\kappa,\kappa^+)$-shattering family.  

\begin{question} Assume that $\kappa <\lambda$ are regular
cardinals and that there is a strongly 
$(\kappa,\lambda)$-shattering family. We
pose the following questions.
\begin{enumerate}
 \item Is it consistent that $\kappa^+<\lambda$?
 \item Is it consistent that $\lambda<\mathfrak b$?
 \item Is it consistent that $\kappa < \mathfrak b < \lambda$?
\end{enumerate}
\end{question}

  \section{Matrix forcing and distinguishing $\mathfrak h, \mathfrak s,
  \mathfrak b$}

  In this section we recall the forcing methods for distinguishing
   $\mathfrak b$ and $\mathfrak s$ and apply them to prove
   the main results. We denote by $\mathbb D$
   the standard (Hechler) poset for adding a dominating 
   real. The poset $\mathbb D$ is an ordering on
    $\omega^{<\omega}\times \omega^\omega$ where
     $(s,f)<(t,g)$ providing
       $g\leq f$ and $s$ extends $t$ by values that
       are coordinatewise above $g$.
Given a sfip family  $\mathcal F$ of subsets
of $\omega$, there are
two main posets for adding a pseudo-intersection. The
Mathias-Prikry style poset is  $\mathbb M({\mathcal F})$
 that
 consists  
 of pairs $(a,A)$
 where 
 and $A  $ is in the filter
 base generated by $ \mathcal F$,
  $a\subset\min(A)$,  and  $\mathbb M(\mathcal F)$ is
  ordered by 
 $(a_1,A_1) < (a_2,A_2)$ providing
  $a_2\subset a_1\subset a_2\cup A_2$
   and $A_1\subset A_2$.  When the context is clear, we will
   let $\dot x_{\mathcal F}$ denote the canonical name,  
    $\{ (\check n, (a,\omega\setminus n{+}1)): n\in a\subset n+1\}$,
    which is forced to be the desired pseudo-intersection.   
        When $\mathcal U$ is a free ultrafilter
   on $\omega$, 
   $\mathbb M({\mathcal U})$ was the poset used
   in \cite{BlassShelah} and \cite{BrendleFischer} and, in
   this case $\dot x_{\mathcal U}$ is unsplit by the set of 
   ground model subsets of $\omega$.   When mixed with matrix iteration 
   methods, the ultrafilter $\mathcal U$ can be constructed so
   as to not add a dominating real. 
   
   The Laver style poset, $\mathbb L(\mathcal F)$, is also very useful
   in matrix iterations and is defined as follows. The members of
    $\mathbb L(\mathcal F)$ are   subtrees $T$ of
     $\omega^{<\omega}$ with a root or  stem, $\sakne(T)$, and for all
      $\sakne(T)\subseteq t\in T$, the set $\operatorname{Br}(T,t) = 
       \{ j \in\omega : t^\frown j\in T\}$ is an element of
       the filter generated by  $\mathcal F$. 
       This poset is ordered by $\subset$. 
  For each $T\in \mathbb L(\mathcal F)$ and $t\in T$,
   the subtree $T_t = \{ t'\in T : t\cup t' \in \omega^{<\omega}
   \}$ is also a condition.      
       The generic function, $\dot f_{\mathbb L(\mathcal F)}$,
       added by $\mathbb L(\mathcal F)$ can be described 
   by
       the name of the  union of the branch of $\omega^{<\omega}$
       named by $  \{ (\check{t}, \left(\omega^{<\omega}\right)_t ) :
        t\in \omega^{<\omega}\}$.  This poset forces
        that $\dot f_{\mathbb L(\mathcal F)}$ dominates the
        ground model reals and the range of $\dot f_{\mathbb L(\mathcal F)}$ 
        is a pseudo-intersection of $\mathcal F$. Again,
        if $\mathcal F$ is an ultrafilter, this pseudo-intersection
        is not split by any ground model set. 
        
      For each sfip family 
      $\mathcal U$ on $\omega$,
        each of the posets $\mathbb D$, $\mathbb M(\mathcal U)$,
        and $\mathbb L(\mathcal U)$ is $\sigma$-centered. We just
        need this for the fact that this ensures that
        they are upwards ccc.
   
   For a poset $P$ and a set $X$, a canonical $P$-name for a subset
   of $X$ will be a name of the form $\bigcup\{\check{x}\ttimes A_x : 
   x\in X\}$ where, for each $x\in X$, $A_x$ is an antichain of $P$. 
   An antichain of $P$ is a set whose elements are pairwise incompatible
   and a subset of $P$ is predense if its downward closure is dense.
The incompatibility relation on $P$ is denoted as $\perp_P$.   Of
   course if $\dot Y$ is any $P$-name of a subset of $X$, there is 
   a canonical name that is forced to equal it. If $P$ is ccc and
    $X$ is countable, then the set of canonical $P$-names for subsets
    of $X$ has cardinality at most $|P|^{\aleph_0}$. When
    we say that a poset $P$ forces a statement, we intend the meaning
    that every element (i.e. $1_P$) of $P$ forces that statement.
   
   Recall that a poset $P$ is a complete suborder of a poset
    $Q$ providing $P\subset Q$, $<_P\subset <_Q$, $\perp_P\subset
    \perp_Q$, and every predense subset of $P$ is predense in $Q$.
    We write $P\cprec Q$ to mean that $P$ is a complete
    suborder of $Q$.     If $G$ is a $Q$-generic filter and if $P
    \cprec Q$, then $G\cap P$ is a $P$-generic filter. 
    If we say that $Q$ forces some property 
    concerning the forcing extension
    by $P$, we mean that for each $Q$-generic filter
     $G$,      that property holds in $V[G\cap P]$.

  We say that $p\in P$ is a reduct (or a $P$-reduct)
    of $q\in Q$
    if every $r\leq p$ in $P$ is compatible with $q$ in $Q$.  If $P\cprec Q$,
     then every $q\in Q$ has a $P$-reduct.    
    If $\{ P_\alpha : \alpha <\delta\}$ is a
     $\cprec$-increasing chain of posets, then 
     the union $P_\delta = \bigcup\{P_\alpha : \alpha<\delta\}$ 
     satisfies that $P_\alpha\cprec P_\delta$ for all $\alpha<\delta$.
    Before we recall the definition of a matrix-iteration,
     we introduce the following generalization used in \cite{Shelahpseudo}.

\begin{definition}
Let $\kappa>\omega_1$ be a regular\label{matrix} cardinal. For
an\label{matrixposet}
   ordinal  $  \zeta$, a $\kappa\ttimes \zeta$-matrix
 of posets is a family $\{ P_{\alpha,\xi} : \alpha < \kappa, \xi<\zeta\}$ 
 of ccc posets satisfying, for each $\alpha<\kappa$,
  and $\xi < \eta<\zeta$:
\begin{enumerate}
 \item $P_{\alpha,\xi} \cprec P_{\beta,\xi}$ for all $\alpha<\beta <\kappa$,
 \item  $P_{\beta,\xi} = 
 \bigcup \{ P_{\eta,\xi} : \eta < \beta\}$ for $\beta < \kappa$ 
 with $\mathop{cf}(\beta)>\omega$,
 and
 \item for some $\gamma<\kappa$, $P_{\beta,\xi}\cprec P_{\beta,\eta}$
 for all $
\gamma\leq \beta<\kappa$.
\end{enumerate}
\end{definition}

\begin{lemma}
 If $\{ P_{\alpha,\xi} : \alpha < \kappa ,\xi<\zeta\}$ is a 
 $\kappa\ttimes \zeta$-matrix 
 of posets, then\label{limit}
 there is a sequence 
  $\{ P_{\kappa,\xi} : \xi \leq \zeta\}$ of
  ccc posets such that, for each $\xi<\eta\leq \zeta$:
 
\begin{enumerate}
 \item $P_{\kappa ,\xi} =\bigcup\{ P_{\alpha,\xi}:\alpha < \kappa \}$
 \item $P_{\kappa,\zeta} = \bigcup \{ P_{\kappa,\xi} : \xi < \zeta\}$,
  
 \item for\label{chain}
  all $\alpha<\kappa$, $P_{\alpha,\xi}\cprec P_{\kappa,\xi}$, and
 \item $P_{\kappa,\xi}\cprec P_{\kappa,\eta} $\label{top}.  
\end{enumerate}
\end{lemma}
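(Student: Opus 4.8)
The plan is to build the top row one column at a time, setting $P_{\kappa,\xi}=\bigcup\{P_{\alpha,\xi}:\alpha<\kappa\}$ for $\xi<\zeta$, and then putting $P_{\kappa,\zeta}=\bigcup\{P_{\kappa,\xi}:\xi<\zeta\}$; almost everything should then be read off from the matrix axioms together with the quoted fact that the union of a $\cprec$-increasing chain $\cprec$-extends each of its members. Fix $\xi<\zeta$. By the first clause of Definition~\ref{matrix}, $\{P_{\alpha,\xi}:\alpha<\kappa\}$ is a $\cprec$-increasing chain, so that fact yields $P_{\alpha,\xi}\cprec P_{\kappa,\xi}$ for every $\alpha<\kappa$; this is condition~(\ref{chain}), while condition~(1) of the Lemma is just the defining equation. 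To see $P_{\kappa,\xi}$ is ccc, suppose $A$ were an antichain of size $\aleph_1$; sending each $a\in A$ to some $\alpha$ with $a\in P_{\alpha,\xi}$ and using that $\kappa>\omega_1$ is regular, $A\subseteq P_{\alpha^*,\xi}$ for a single $\alpha^*<\kappa$. Since $<_{P_{\alpha^*,\xi}}\subseteq <_{P_{\kappa,\xi}}$, a common lower bound in $P_{\alpha^*,\xi}$ of two members of $A$ is still one in $P_{\kappa,\xi}$, so $A$ is an uncountable antichain in the ccc poset $P_{\alpha^*,\xi}$, which is absurd.

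Next I would verify $P_{\kappa,\xi}\cprec P_{\kappa,\eta}$ for $\xi<\eta<\zeta$. Fix the $\gamma<\kappa$ provided by the third clause of Definition~\ref{matrix} for this pair, so $P_{\beta,\xi}\cprec P_{\beta,\eta}$ for all $\beta$ with $\gamma\le\beta<\kappa$; as the chains are increasing, $P_{\kappa,\xi}=\bigcup_{\gamma\le\beta<\kappa}P_{\beta,\xi}$ and likewise for $\eta$. The inclusion $P_{\kappa,\xi}\subseteq P_{\kappa,\eta}$ and the preservation of $<$ and of $\perp$ are then immediate: for instance, a common lower bound in $P_{\kappa,\eta}$ of two elements of $P_{\kappa,\xi}$ lies, together with those two elements, in a single $P_{\beta,\eta}$ with $\beta\ge\gamma$, and hence pulls back to a common lower bound in $P_{\beta,\xi}$ because $P_{\beta,\xi}\cprec P_{\beta,\eta}$. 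For the density requirement, let $A$ be a maximal antichain of $P_{\kappa,\xi}$; it is countable by the ccc just proved, so $A\subseteq P_{\beta,\xi}$ for some $\beta\ge\gamma$, and $A$ is a maximal antichain of $P_{\beta,\xi}$ (maximality transfers down because $\perp_{P_{\beta,\xi}}\subseteq\perp_{P_{\kappa,\xi}}$). Since $P_{\beta,\xi}\cprec P_{\beta,\eta}$, $A$ is predense in $P_{\beta,\eta}$, and then predense in $P_{\beta',\eta}$ for all $\beta'\ge\beta$ by the first clause of Definition~\ref{matrix}; as every condition of $P_{\kappa,\eta}$ lies in some such $P_{\beta',\eta}$, $A$ is predense in $P_{\kappa,\eta}$, which is what remained.

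It remains to treat $P_{\kappa,\zeta}$. By the previous paragraph $\{P_{\kappa,\xi}:\xi<\zeta\}$ is a $\cprec$-increasing chain, so the quoted fact gives $P_{\kappa,\xi}\cprec P_{\kappa,\zeta}$ for all $\xi<\zeta$; with the case $\xi<\eta<\zeta$ already done, this is conditions~(2) and~(\ref{top}) and the equation defining $P_{\kappa,\zeta}$. The step I expect to require the most care is showing $P_{\kappa,\zeta}$ is ccc, and I would split on $\cf(\zeta)$. When $\cf(\zeta)\ge\omega_2$, an antichain of size $\aleph_1$ is bounded below $\zeta$ and one reduces to the ccc of a single $P_{\kappa,\xi}$. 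When $\cf(\zeta)=\omega$, writing $P_{\kappa,\zeta}=\bigcup_n P_{\kappa,\xi_n}$ along a cofinal $\omega$-sequence, any antichain $A$ of size $\aleph_1$ satisfies $A=\bigcup_n(A\cap P_{\kappa,\xi_n})$, so by regularity of $\aleph_1$ some $A\cap P_{\kappa,\xi_n}$ is uncountable, contradicting the ccc of $P_{\kappa,\xi_n}$. The residual case $\cf(\zeta)=\omega_1$ is the genuine obstacle, since a $\cprec$-increasing $\omega_1$-chain of ccc posets need not have ccc union without some finiteness-of-supports input; here one either restricts attention to $\zeta$ of cofinality $\ne\omega_1$ — which is all that is needed in the applications, where $\zeta$ is a regular cardinal above $\aleph_1$ — or uses that the columns of the matrices actually constructed are finite-support iterations and applies the usual $\Delta$-system argument on supports.
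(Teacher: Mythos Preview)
Your argument is correct and considerably more thorough than the paper's, which occupies three sentences: item~(3) is read off from clause~(1) of Definition~\ref{matrix} (as you also do), and for item~(4) with $\xi<\eta<\zeta$ the paper simply asserts that it suffices to check $P_{\alpha,\xi}\cprec P_{\kappa,\eta}$ for every $\alpha<\kappa$, and then chains $P_{\alpha,\xi}\cprec P_{\gamma,\xi}\cprec P_{\gamma,\eta}\cprec P_{\kappa,\eta}$ using transitivity of $\cprec$ and the $\gamma$ from clause~(3) of Definition~\ref{matrix}. The case $\eta=\zeta$ then follows from the quoted fact about unions of $\cprec$-chains. The paper does not verify the ccc claims at all.

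Your route to~(4) is genuinely different: rather than invoking the (unstated) general principle behind ``it suffices to check $P_{\alpha,\xi}\cprec P_{\kappa,\eta}$,'' you first prove that $P_{\kappa,\xi}$ is ccc and use this to push an arbitrary maximal antichain of $P_{\kappa,\xi}$ down into a single $P_{\beta,\xi}$ with $\beta\ge\gamma$, after which $P_{\beta,\xi}\cprec P_{\beta,\eta}\cprec P_{\kappa,\eta}$ finishes the predensity check directly. This costs a few more lines but is entirely self-contained. Your diagnosis of the ccc of $P_{\kappa,\zeta}$ is also accurate: the case $\operatorname{cf}(\zeta)=\omega_1$ is a real obstacle for a bare $\cprec$-chain of ccc posets, the paper glosses over it, and---as you note---it is harmless in the applications, where $\zeta$ is always a regular cardinal above $\aleph_1$.
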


\begin{proof} Item (\ref{chain}) follows  
immediately from item (1) of Definition \ref{matrix}. 
  To  prove (\ref{top}) 
  it suffices to check 
  that $P_{\alpha,\xi} \cprec P_{\kappa,\eta}$ for all 
   $\alpha <\kappa$ and $\xi <\eta<\zeta$.
  Let $\alpha<\kappa$ and $\xi<\eta<\zeta$. 
  Choose 
$\gamma<\kappa$ as in property (3) of Definition \ref{matrix}.
Now
we have $P_{\alpha,\xi}\cprec P_{\gamma,\xi}\cprec  P_{\gamma,\eta}\cprec P_{\kappa,\eta}$. 
Since $\cprec$ is a transitive relation, the proof is complete.
\end{proof}

The terminology ``matrix iterations'' is used in  
 \cite{BrendleFischer}, see also forthcoming preprint 
(F1222) from the second author. 

\begin{definition}
For an infinite cardinal $\kappa$ with
uncountable cofinality, and an ordinal $\zeta$,
a $\kappa\ttimes\zeta$-matrix iteration is a family
$$\langle\langle \mathbb P_{\alpha,\xi} : \alpha\leq \kappa,
 \xi \leq \zeta\rangle,
  \langle \dot{\mathbb Q}_{\alpha,\xi} : 
  \alpha \leq \kappa,\xi<\zeta\rangle\rangle$$
  where, for each $\alpha<\beta\leq \kappa$
  and $\xi <\eta \leq \zeta$:
 
\begin{enumerate}
 \item $\mathbb P_{\beta,\xi}$ is a ccc poset,
 \item $\mathbb P_{\alpha,\xi}\cprec \mathbb P_{\beta,\xi}\cprec
 \mathbb P_{\beta,\eta}$,
  \item  $\mathbb P_{\kappa,\xi}$ is the union of the chain
   $\{ \mathbb P_{\gamma,\xi} : \gamma <\kappa\}$,
 \item $\dot{\mathbb Q}_{\alpha,\xi}$ is a $\mathbb P_{\alpha,\xi}$-name of a 
  ccc poset and $\mathbb P_{\alpha,\xi+1} = \mathbb P_{\alpha,\xi}*
   \dot{\mathbb Q}_{\alpha,\xi}$,
   \item if $\eta$ is a limit, then $\mathbb P_{\beta,\eta} =
    \bigcup\{ \mathbb P_{\beta, \gamma  } : \gamma <\eta\}$.
   
\end{enumerate}
   \end{definition}

One  constructs $\kappa\ttimes\zeta$-iterations by recursion
on $\zeta$ and, for successor steps,
 by careful choice of the
   component sequence
  $\{\dot{\mathbb Q}_{\alpha,\xi} : \alpha \leq\kappa\}$. 
The first important result is that all the work is in the successor
steps.  
 The following is from \cite{BrendleFischer}*{Lemma 3.10}

\begin{lemma} If $\zeta $ is a limit\label{limitMatrix}
ordinal then a family
 $$\langle\langle \mathbb P_{\alpha,\xi} : \alpha\leq\kappa,
 \xi \leq \zeta \rangle,\langle \dot{\mathbb Q}_{\alpha,\xi} : 
 \alpha\leq\kappa,
  \xi <\zeta \rangle \rangle$$     is a 
  $\kappa\ttimes\zeta$-matrix iteration providing
   for all $\eta<\zeta$ and $\beta \leq\kappa$:
   
\begin{enumerate}
\item $\langle\langle \mathbb P_{\alpha,\xi} : \alpha\leq\kappa,
 \xi \leq \eta \rangle,\langle \dot{\mathbb Q}_{\alpha,\xi} : \alpha\leq
 \kappa,
  \xi <\eta \rangle \rangle$  is
  a $\kappa\ttimes\eta$-matrix iteration,  and
  \item $\mathbb P_{\beta,\zeta} = \bigcup\{
   \mathbb P_{\beta,\xi} : \xi<\zeta\}$.
\end{enumerate}
\end{lemma}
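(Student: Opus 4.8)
\textbf{Proof plan for Lemma \ref{limitMatrix}.}

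The claim is that, at a limit ordinal $\zeta$, the matrix-iteration axioms need only be verified ``below $\zeta$'' together with the unions being taken at $\zeta$; the limit stage imposes no genuinely new obligations. So the plan is to go through conditions (1)--(5) of the definition of a $\kappa\ttimes\zeta$-matrix iteration one at a time, and check that each instance reduces either to an instance of a $\kappa\ttimes\eta$-matrix iteration for some $\eta<\zeta$ (handled by hypothesis (1) of the lemma), or to the union equations, one of which is hypothesis (2) of the lemma and the other is already part of being a $\kappa\ttimes\eta$-iteration for all $\eta<\zeta$. Conditions (1), (4), and (5) involve only ordinals $\xi<\zeta$ (condition (5) is vacuous at $\eta=\zeta$ only when $\zeta$ is a limit, in which case it becomes precisely hypothesis (2)), so they are immediate. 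The only conditions that can reach ``up to $\zeta$'' in the second coordinate are (2) --- the $\cprec$-coherence $\mathbb P_{\alpha,\xi}\cprec\mathbb P_{\beta,\xi}\cprec\mathbb P_{\beta,\eta}$ with possibly $\eta=\zeta$ --- and (3), the chain-union equation, which concerns only the first coordinate and so follows for each fixed $\xi<\zeta$ from the hypotheses and is vacuous in content at $\xi=\zeta$ beyond what (2) of the lemma already gives.

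Thus the heart of the matter is to show: for $\alpha<\beta\le\kappa$ and $\xi<\zeta$, we have $\mathbb P_{\beta,\xi}\cprec\mathbb P_{\beta,\zeta}$, and that $\mathbb P_{\beta,\zeta}$ is ccc. For the $\cprec$ statement, I would argue exactly as in the proof of Lemma \ref{limit}: by hypothesis (2) of the lemma, $\mathbb P_{\beta,\zeta}=\bigcup_{\gamma<\zeta}\mathbb P_{\beta,\gamma}$ is a union of a $\cprec$-increasing chain (the chain is $\cprec$-increasing because for $\gamma'<\gamma<\zeta$, the family restricted to $\xi\le\gamma$ is a $\kappa\ttimes\gamma$-iteration by hypothesis (1), giving $\mathbb P_{\beta,\gamma'}\cprec\mathbb P_{\beta,\gamma}$ from condition (2) of that iteration); and a union of a $\cprec$-increasing chain of posets has each member of the chain as a complete suborder, a fact recalled explicitly in the excerpt just before Definition \ref{matrix}. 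Hence $\mathbb P_{\beta,\gamma}\cprec\mathbb P_{\beta,\zeta}$ for all $\gamma<\zeta$, and in particular for $\gamma=\xi$, $\mathbb P_{\beta,\xi}\cprec\mathbb P_{\beta,\zeta}$; composing with $\mathbb P_{\alpha,\xi}\cprec\mathbb P_{\beta,\xi}$ (from the $\kappa\ttimes(\xi{+}1)$-iteration hypothesis) and transitivity of $\cprec$ delivers condition (2) in full.

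For the ccc of $\mathbb P_{\beta,\zeta}$, the standard point is that a finite-support-style increasing union of ccc posets along a chain of $\cprec$-embeddings is ccc: given an uncountable antichain $A\subseteq\mathbb P_{\beta,\zeta}$, each element lies in some $\mathbb P_{\beta,\gamma_p}$ with $\gamma_p<\zeta$; if $\cf(\zeta)>\omega$ one immediately confines uncountably many elements of $A$ to a single $\mathbb P_{\beta,\gamma}$, contradicting its ccc since $\cprec$ preserves incompatibility, while if $\cf(\zeta)=\omega$ one first passes to a countable cofinal sequence and applies the usual $\Delta$-system / reflection argument showing that an uncountable antichain would reflect to an uncountable antichain in one $\mathbb P_{\beta,\gamma_n}$ (this is exactly where one uses that each finite initial piece is already a matrix iteration and that the embeddings are complete, so reducts exist). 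I do not expect a genuine obstacle here --- the argument is routine and is precisely the reason the lemma is stated --- but the one place to be careful is the $\cf(\zeta)=\omega$ case of ccc, where one must invoke the standard preservation-of-ccc-under-finite-support-limits lemma rather than a one-line confinement argument; I would either cite \cite{BrendleFischer}*{Lemma 3.10} for this or note that it follows from the corresponding classical fact about finite support iterations applied coordinatewise in the $\xi$-direction.
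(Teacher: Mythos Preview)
The paper does not actually supply a proof of this lemma; it is stated with attribution to \cite{BrendleFischer}*{Lemma 3.10} and no argument is given. Your plan is correct and fills in exactly the routine verification one expects: the only conditions with new content at the limit stage are the ccc of $\mathbb P_{\beta,\zeta}$ and the relation $\mathbb P_{\beta,\xi}\cprec\mathbb P_{\beta,\zeta}$, both of which follow from the fact (recalled just before Definition~\ref{matrix}) that the union of a $\cprec$-increasing chain sits above each member as a complete suborder.

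Two small remarks. First, your handling of condition~(3) at $\xi=\zeta$ is a touch glib: the equation $\mathbb P_{\kappa,\zeta}=\bigcup_{\gamma<\kappa}\mathbb P_{\gamma,\zeta}$ is not literally hypothesis~(2) of the lemma, but follows from it by interchanging the two unions, $\bigcup_{\xi<\zeta}\bigcup_{\gamma<\kappa}\mathbb P_{\gamma,\xi}=\bigcup_{\gamma<\kappa}\bigcup_{\xi<\zeta}\mathbb P_{\gamma,\xi}$. Second, the ccc argument in the $\cf(\zeta)=\omega$ case is simpler than you make it sound: fix a countable cofinal sequence $(\gamma_n)_{n<\omega}$ in $\zeta$; any uncountable antichain in $\mathbb P_{\beta,\zeta}$ must, by straight pigeonhole, place uncountably many elements in a single $\mathbb P_{\beta,\gamma_n}$, and since compatibility in $\mathbb P_{\beta,\gamma_n}$ implies compatibility in $\mathbb P_{\beta,\zeta}$, these remain an antichain there, contradicting ccc. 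No $\Delta$-system or reduct argument is needed.
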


The following is well-known, see for example
\cite{Diego13}*{Section 5}
and \cite{SouslinForcing}.

\begin{proposition}
For any  $\zeta $\label{easyStep} 
and $\kappa\ttimes\zeta$-matrix iteration
$$\langle\langle \mathbb P_{\alpha,\xi} : \alpha\leq\kappa,
 \xi \leq \zeta \rangle,\langle \dot{\mathbb Q}_{\alpha,\xi} : \alpha\leq\kappa,
  \xi <\zeta \rangle \rangle$$
  the extension  
 $$\langle\langle \mathbb P_{\alpha,\xi} : \alpha\leq\kappa,
 \xi \leq \zeta{+}1 \rangle,\langle \dot{\mathbb Q}_{\alpha,\xi} : \alpha\leq
 \kappa,
  \xi <\zeta{+}1 \rangle \rangle$$     is a 
  $\kappa\ttimes(\zeta{+}1)$-matrix iteration if either 
  the following holds:
 
\begin{enumerate}

 \item [(1)${}_{\mathbb Q}$]
 for all
  $\alpha\leq\kappa$,
  $\dot {\mathbb Q}_{\alpha,\zeta}$ is the $\mathbb P_{\alpha,\zeta}$-name 
 for $\mathbb D$, 
 \item [(2)${}_{\mathbb Q}$]
 there is an $\alpha<\kappa$ such that $\dot{\mathbb Q}_{\beta,\zeta}$
 is the trivial poset for $\beta <\alpha$, $\dot {\mathbb Q}_{\alpha,\zeta}$
 is a $\mathbb P_{\alpha,\zeta}$-name of a $\sigma$-centered  poset, and
  $\dot {\mathbb Q}_{\beta,\zeta} = \dot {\mathbb Q}_{\alpha,\zeta}$ for all
   $\alpha <\beta\leq \kappa$.
\end{enumerate}
 
\end{proposition}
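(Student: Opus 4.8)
The plan is to verify the five clauses of the matrix-iteration definition for the extended family, in each case reducing to what is already known for the given $\kappa\ttimes\zeta$-matrix iteration together with the two well-known facts about finite support iteration of ccc (indeed $\sigma$-centered) posets: that a finite support product/iteration of ccc posets is ccc, and that if $\Poset\cprec\Qposet$ and $\dot{\mathbb R}$ is a $\Poset$-name for a ccc poset which, via the same name, is also a $\Qposet$-name for a ccc poset, then $\Poset*\dot{\mathbb R}\cprec\Qposet*\dot{\mathbb R}$. Clauses (3) and (5) are immediate: at the new stage $\zeta+1$ clause (5) is vacuous since $\zeta+1$ is a successor, and clause (3) for $\xi=\zeta+1$ asks only that $\mathbb P_{\kappa,\zeta+1}$ be the union of the chain $\{\mathbb P_{\gamma,\zeta+1}:\gamma<\kappa\}$, which holds because $\mathbb P_{\gamma,\zeta+1}=\mathbb P_{\gamma,\zeta}*\dot{\mathbb Q}_{\gamma,\zeta}$, the posets $\mathbb P_{\gamma,\zeta}$ form a chain with union $\mathbb P_{\kappa,\zeta}$, and the names $\dot{\mathbb Q}_{\gamma,\zeta}$ are coherent (either all equal to the canonical name for $\mathbb D$, or all equal to a fixed $\mathbb P_{\alpha,\zeta}$-name for $\beta\ge\alpha$ and trivial below $\alpha$).

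Next I would treat clause (4), which is a definitional identity $\mathbb P_{\alpha,\zeta+1}=\mathbb P_{\alpha,\zeta}*\dot{\mathbb Q}_{\alpha,\zeta}$, together with the requirement that $\dot{\mathbb Q}_{\alpha,\zeta}$ be a $\mathbb P_{\alpha,\zeta}$-name for a ccc poset. In case $(1)_{\mathbb Q}$ this is the Hechler poset $\mathbb D$, which is $\sigma$-centered hence ccc in any extension; in case $(2)_{\mathbb Q}$ it is the trivial poset for $\beta<\alpha$ (clearly ccc) and otherwise a single $\mathbb P_{\alpha,\zeta}$-name $\dot{\mathbb Q}_{\alpha,\zeta}$ for a $\sigma$-centered poset, which must be interpreted correctly as a $\mathbb P_{\beta,\zeta}$-name for $\beta>\alpha$: since $\mathbb P_{\alpha,\zeta}\cprec\mathbb P_{\beta,\zeta}$ by clause (2) of the matrix iteration, every $\mathbb P_{\alpha,\zeta}$-name is canonically a $\mathbb P_{\beta,\zeta}$-name, and $\sigma$-centeredness of the interpretation is preserved because a maximal antichain witnessing a counterexample to ccc would reflect into the smaller model. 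Then clause (1), that each $\mathbb P_{\beta,\zeta+1}$ is ccc, follows from clause (4) plus the ccc-ness of $\mathbb P_{\beta,\zeta}$ (given) and the fact that a two-step iteration of ccc posets is ccc.

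The substantive clause is (2): for $\alpha<\beta\le\kappa$ and $\xi<\eta\le\zeta+1$ we need $\mathbb P_{\alpha,\xi}\cprec\mathbb P_{\beta,\xi}\cprec\mathbb P_{\beta,\eta}$. When $\eta\le\zeta$ this is already part of the hypothesis, so the only new content is $\mathbb P_{\beta,\zeta}\cprec\mathbb P_{\beta,\zeta+1}$ for all $\beta\le\kappa$, and the horizontal relation $\mathbb P_{\alpha,\zeta+1}\cprec\mathbb P_{\beta,\zeta+1}$ for $\alpha<\beta$. The vertical relation is the standard fact that $\mathbb P\cprec\mathbb P*\dot{\mathbb R}$. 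For the horizontal relation I would argue: we know $\mathbb P_{\alpha,\zeta}\cprec\mathbb P_{\beta,\zeta}$, and the component name $\dot{\mathbb Q}_{\alpha,\zeta}$ agrees with $\dot{\mathbb Q}_{\beta,\zeta}$ as names over the smaller poset (in case $(1)_{\mathbb Q}$ both are the canonical name for the ground-model poset $\mathbb D$, which is absolute; in case $(2)_{\mathbb Q}$, for $\alpha<\beta$ both equal the same $\mathbb P_{\alpha,\zeta}$-name, and for the trivial-poset range the claim is trivial), and then invoke the preservation-of-$\cprec$-under-two-step-iteration lemma to conclude $\mathbb P_{\alpha,\zeta}*\dot{\mathbb Q}_{\alpha,\zeta}\cprec\mathbb P_{\beta,\zeta}*\dot{\mathbb Q}_{\beta,\zeta}$. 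I expect the main obstacle to be purely bookkeeping: making precise the sense in which the $\dot{\mathbb Q}$-names ``are the same'' across different rows so that the iteration-preserves-$\cprec$ lemma applies cleanly — in particular handling case $(1)_{\mathbb Q}$, where $\mathbb D$ is a fixed ground model poset and the issue is only that its canonical name is recognized identically over every $\mathbb P_{\beta,\zeta}$, and case $(2)_{\mathbb Q}$ at the boundary index $\alpha$ where the name passes from trivial to nontrivial. None of this is deep, which is exactly why the proposition is labelled ``well-known,'' so the write-up should simply cite \cite{Diego13}*{Section 5} and \cite{SouslinForcing} for the $\cprec$-preservation machinery and note that the Hechler and $\sigma$-centered cases are the two instances where the component name is guaranteed to be re-interpreted coherently up the $\kappa$-chain.
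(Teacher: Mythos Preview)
The paper gives no proof of this proposition; it is simply labelled ``well-known'' and the reader is referred to \cite{Diego13}*{Section~5} and \cite{SouslinForcing}. Your clause-by-clause verification is the right plan and your treatment of case $(2)_{\mathbb Q}$ is correct.

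There is, however, a genuine misconception in your handling of case $(1)_{\mathbb Q}$. You write that ``$\mathbb D$ is a fixed ground model poset'' whose ``canonical name is recognized identically over every $\mathbb P_{\beta,\zeta}$.'' This is false: the Hechler poset is an ordering on $\omega^{<\omega}\times\omega^\omega$, and $\omega^\omega$ grows in forcing extensions. Thus the $\mathbb P_{\alpha,\zeta}$-name for $\mathbb D$ and the $\mathbb P_{\beta,\zeta}$-name for $\mathbb D$ evaluate to \emph{different} posets, the first a proper subset of the second. The lemma ``$\Poset\cprec\Qposet$ and $\dot{\mathbb R}$ a $\Poset$-name implies $\Poset*\dot{\mathbb R}\cprec\Qposet*\dot{\mathbb R}$'' that you invoke therefore does not apply here as stated: the two iterands are not the same name.

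What actually makes case $(1)_{\mathbb Q}$ work is that Hechler forcing is \emph{Suslin ccc}: its set of conditions, its ordering, and its incompatibility relation are all given by absolute (indeed arithmetic) formulas. The key consequence, established in \cite{SouslinForcing}, is that whenever $M\subseteq N$ are transitive models then $\mathbb D^M\cprec\mathbb D^N$; combined with $\mathbb P_{\alpha,\zeta}\cprec\mathbb P_{\beta,\zeta}$ this yields $\mathbb P_{\alpha,\zeta}*\dot{\mathbb D}\cprec\mathbb P_{\beta,\zeta}*\dot{\mathbb D}$. So the citation you append at the end is exactly the right one, but the mechanism is not ``the name is literally the same''; it is ``the definable poset grows, and Suslin-ness guarantees the growth is a complete embedding.'' Once you correct this point your argument goes through.
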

  
  Notice that if we define the extension as in (1)${}_{
  \mathbb Q}$  then we will be adding a 
  dominating real, but even if $\dot{\mathbb Q}_{\alpha,\zeta}$
  is forced to equal $\mathbb D$ in 
   (2)${}_{
  \mathbb Q}$, the real added will only dominate
  the reals added by $\mathbb P_{\alpha,\zeta}$.

\begin{proposition}\cite{BlassShelah}
 Let $M$ be a    model of (a sufficient amount of) set-theory
 and $P\in M$  be a poset that is also contained in $M$.
  Then for any $f\in \omega^\omega$ that is not dominated
 by any $g\in M\cap \omega^\omega$, 
  $P$ forces that $f\not\leq \dot g$  
  for all\label{nobound}  
   $P$-names $\dot g\in M$ of elements of $\omega^\omega$.
 \end{proposition}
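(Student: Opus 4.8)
The plan is to argue by contradiction, extracting from any condition forcing ``$f\le^* \dot g$'' an actual member of $M\cap\omega^\omega$ dominating $f$. Note that no chain condition or properness of $P$ is needed.

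Suppose some $p\in P$ forces ``$f\le^*\dot g$''. Since the modulus witnessing $\le^*$ is decided by some extension of $p$, I would first strengthen $p$ (still called $p$) and fix $m\in\omega$ so that $p$ forces ``$\dot g(\check n)\ge\check{f(n)}$'' for every $n\ge m$. The point to observe is that, although $f\notin M$, each assertion ``$p\Vdash_P\dot g(\check n)\ge\check k$'' is a legitimate statement of $M$, its parameters $p,\dot g,P$ and the natural number $k$ all lying in $M$; so the family of these assertions with $k=f(n)$, $n\ge m$, is available to $M$ even though no name for $f$ is.

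The heart of the argument is then to define, inside $M$, a function $g^\ast\in\omega^\omega$ by letting $g^\ast(n)$ be the least $k\in\omega$ such that some $q\le p$ forces ``$\dot g(\check n)=\check k$''. One checks that this is well-defined and total: since $p$ forces $\dot g$ to be a function into $\omega$, the displayed set is nonempty for each $n$, so $g^\ast\in M\cap\omega^\omega$. On the other hand, for $n\ge m$ every value that $\dot g(\check n)$ can be forced below $p$ to equal is, by the previous paragraph, at least $f(n)$; hence $g^\ast(n)\ge f(n)$. Thus $f\le^* g^\ast$ with $g^\ast\in M\cap\omega^\omega$, contradicting the hypothesis.

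The only real bookkeeping obstacle is the one just flagged: because $f$ is not an element of $M$, one must phrase matters coordinatewise, exploiting that each $f(n)$ is a genuine natural number and so belongs to $M$, rather than through a single name for $f$. Granting that, the reduction to a fixed modulus $m$, the totality of $g^\ast$, and the identification of ``$P$ forces $f\not\le^*\dot g$'' with the nonexistence of such a pair $(p,m)$ are all routine.
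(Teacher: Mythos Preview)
Your argument is correct and is essentially the same as the paper's: both define, inside $M$, a function recording a possible value of $\dot g(n)$ below $p$ (you take the least such value, the paper takes an arbitrary one), and then use that $f$ is not dominated by this function to find a coordinate where some $q\le p$ forces $\dot g(k)<f(k)$. The only cosmetic differences are that you frame it as a proof by contradiction and handle the $\le^*$ modulus explicitly, whereas the paper argues directly that for every $p$ and every $n$ there is such a $k>n$; both in fact establish the stronger conclusion that $P$ forces $f\not\le^*\dot g$.
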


\begin{proof}
 Let $p\in P$ and $n\in\omega$. It suffices to prove
 that there is a $q<p$ in $P$ and a $k>n$ and $m < f(k)$
 such
 that $q\Vdash \dot g(k)=m$. Since $p\in M$, we can work
  in $M$
and  define a function $h\in\omega^\omega$ by the rule
  that, for all $k\in\omega$, there is a $q_k<p$ such
  that $q_k\Vdash \dot g(k) = h(k)$. Choose any $k>n$
  so that $h(k) < f(k)$. Then $q_k\Vdash \dot g(k)<f(k)$
  and proves that $p\not\Vdash f\leq \dot g$.
\end{proof}
  
  An analogous result, with the same proof, holds for splitting.

\begin{proposition}
Let $M$ be a    model of (a sufficient amount of) set-theory
 and $P\in M$  be a poset that is also contained in $M$.
  If   $x\in  [\omega]^\omega$ satisfies\label{nosplit}
that $  y\not\subset x$ for all $y\in M\cap [\omega]^\omega$,
 then $P$ forces that $ \dot y\not\subset x$  
 for all $P$-names $\dot y\in M$ for elements of
  $[\omega]^\omega$.
\end{proposition}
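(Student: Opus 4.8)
The plan is to follow the proof of Proposition~\ref{nobound} almost word for word, replacing ``domination'' by ``containment''. To show that $1_P$ forces $\dot y\not\subset x$ it suffices to prove that the set $D$ of conditions forcing $\dot y\not\subset x$ is dense below every condition; since $D$ is also downward closed — once $q\Vdash k\in\dot y$ for some $k\notin x$, every extension of $q$ still forces $k\in\dot y$, and $k\notin x$ is simply a fact about $x$ — it will then be dense open, hence met by every generic filter, which gives $1_P\Vdash\dot y\not\subset x$. Concretely, I would fix an arbitrary $p\in P$ and $n\in\omega$ and aim to produce $q<p$ and $k>n$ with $k\notin x$ and $q\Vdash k\in\dot y$.

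To find them, observe that $p$, $P$ and $\dot y$ all lie in $M$, so the construction can be carried out inside $M$, using that $M$ models enough set theory and that the relation ``$q\Vdash k\in\dot y$'' is absolute between $M$ and $V$, since it mentions only the poset and the name, both of which are in $M$. Because $P$ forces $\dot y\in[\omega]^\omega$, for every integer $m$ the condition $p$ has an extension forcing some specific integer greater than $m$ into $\dot y$; iterating this recursion in $M$ produces an infinite set $y^*\in M\cap[\omega]^\omega$, which we may arrange to consist only of integers greater than $n$, together with conditions $q_k<p$ for $k\in y^*$ such that $q_k\Vdash k\in\dot y$. Now apply the hypothesis to $y^*$: as $y^*$ is an infinite subset of $\omega$ lying in $M$, we have $y^*\not\subset x$, so we may pick $k\in y^*\setminus x$, and $k>n$ by construction. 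Setting $q=q_k<p$ we get $q\Vdash k\in\dot y$ while $k\notin x$, so $q\Vdash\dot y\not\subset x$, which is what the density argument requires.

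I do not expect a genuine obstacle; the statement is essentially a relativised density computation, structurally identical to that of Proposition~\ref{nobound}. The only points needing a little care are that the set $y^*$ constructed inside $M$ really belongs to $M$ and really is infinite — both of which follow from $p,P,\dot y\in M$ together with the absoluteness of the forcing relation for objects in $M$ — and, implicitly, that $\omega\setminus x$ is infinite so that a suitable $k$ can exist at all; the latter is automatic, since if $\omega\setminus x$ were finite then $x$, being the complement of a finite set, would lie in $M$, contradicting the hypothesis applied to $y=x$. With these remarks in place the proof goes through exactly in parallel with Proposition~\ref{nobound}.
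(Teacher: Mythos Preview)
Your argument is correct and is exactly what the paper intends: it states only that ``an analogous result, with the same proof, holds for splitting'' and gives no separate proof, so your line-by-line adaptation of the proof of Proposition~\ref{nobound} is precisely the paper's approach. The extra remarks you make (downward closure of $D$, infinitude of $\omega\setminus x$) are fine but not needed for the argument to go through.
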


 We also use the main construction from \cite{BlassShelah}.

\begin{proposition}
 Suppose that $$\langle\langle \mathbb P_{\alpha,\xi} : \alpha\leq\kappa,
 \xi \leq \zeta \rangle,\langle \dot{\mathbb Q}_{\alpha,\xi} : \alpha \leq\kappa,
  \xi <\zeta \rangle \rangle$$ is a $\kappa\times\zeta$-matrix iteration
  and that $\{\dot f_\alpha : \alpha<\kappa\}$ is a\label{bsplit}
  sequence satisfying
  that, for all $\alpha<\kappa$
\begin{enumerate}
 \item $\dot f_\alpha$ is a $\mathbb P_{\alpha,\zeta}$-name that is forced to be 
 in $\omega^\omega$,
 \item for all $\beta<\alpha$ and $\mathbb P_{\beta,\zeta}$-name
 $\dot g$ of a member of 
  $\omega^\omega$, $\mathbb P_{\alpha,\zeta}$ forces that $\dot f_\alpha \not< \dot g$.
\end{enumerate}
Then there is a sequence $\{ \dot {\mathcal U}_{\alpha,\zeta}
 : \alpha\leq \kappa\}$ such
that, for all $\alpha<\kappa$:
\begin{enumerate}
\setcounter{enumi}{2}
 \item $\dot{\mathcal U}_{\alpha,\zeta}$ is a $\mathbb P_{\alpha,\zeta}$-name of
 an ultrafilter on $\omega$,
 \item for $\beta<\alpha$, $\dot{\mathcal U}_{\beta,\zeta}$ is a subset
 of $\dot{\mathcal U}_{\alpha,\zeta}$  
 \item for each $\beta<\alpha$ and each $\mathbb P_{\beta,\zeta}*
 \mathbb M(\dot{\mathcal U}_{\beta,\zeta})$-name $\dot g$ of an element of
  $\omega^\omega$, $\mathbb P_{\alpha,\zeta}*
  \mathbb M(\dot{\mathcal U}_{\alpha,\zeta})$ 
  forces that $\dot f_{\alpha}\not< \dot g$, and
  \item $\langle\langle \mathbb P_{\alpha,\xi} : \alpha\leq\kappa,
 \xi \leq \zeta{+}1 \rangle,\langle \dot{\mathbb Q}_{\alpha,\xi} : \alpha\leq\kappa,
  \xi <\zeta{+}1 \rangle \rangle$ is a $\kappa\ttimes(\zeta{+}1)$-matrix iteration, 
  where, for each $\alpha\leq\kappa$, $\mathbb P_{\alpha,\zeta{+}1}
   = \mathbb P_{\alpha,\zeta}*
 \dot {\mathbb Q}_{\alpha,\zeta}$ and  $\dot {\mathbb Q}_{\alpha,\zeta}$ 
 is the $\mathbb P_{\alpha,\zeta}$-name for 
 $\mathbb M(\dot{\mathcal U}_{\alpha,\zeta})$. 
\end{enumerate}
\end{proposition}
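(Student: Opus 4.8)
The statement is the standard ``matrix-iteration of Mathias forcing with a coherent sequence of ultrafilters'' construction from \cite{BlassShelah}, and the proof is a recursion along $\alpha<\kappa$, building $\dot{\mathcal U}_{\alpha,\zeta}$ while preserving the non-domination property (2). First I would set up the recursion: at stage $\alpha$ I am given $\{\dot{\mathcal U}_{\beta,\zeta}:\beta<\alpha\}$ satisfying (3)--(5) for all pairs below $\alpha$, and I must produce $\dot{\mathcal U}_{\alpha,\zeta}$. The base of the ultrafilter should include $\bigcup_{\beta<\alpha}\dot{\mathcal U}_{\beta,\zeta}$ (this is forced to be a filter base since the $\dot{\mathcal U}_{\beta,\zeta}$ form a $\subseteq$-chain, using item (3) of Definition \ref{matrix} / Lemma \ref{limit} to see that at limit $\alpha$ of uncountable cofinality $\mathbb P_{\alpha,\zeta}$ is the union of the $\mathbb P_{\beta,\zeta}$'s so every $\mathbb P_{\alpha,\zeta}$-name for a subset of $\omega$ already lives at some $\beta<\alpha$). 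Then I would enumerate, in the extension $V[\mathbb P_{\alpha,\zeta}]$, all subsets of $\omega$ in order type $\cee$ (or use a book-keeping over names) and run an internal recursion of length $\cee$ deciding, for each set $A\subseteq\omega$, whether to throw $A$ or $\omega\setminus A$ into the ultrafilter, always keeping the strong finite intersection property.

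The real content is that each such decision can be made so as not to destroy (5) — i.e. so that $\mathbb P_{\beta,\zeta}*\mathbb M(\dot{\mathcal U}_{\beta,\zeta})$-names for reals are still not eventually dominating $\dot f_\alpha$ after we further force with $\mathbb M(\dot{\mathcal U}_{\alpha,\zeta})$. The key point here is the ``diagonalization preserves unbounded reals'' phenomenon: one shows that if $\mathcal U$ is a filter (base) over which $\dot f_\alpha$ remains unbounded in the appropriate sense, and $A\subseteq\omega$, then at least one of $\mathcal U\cup\{A\}$, $\mathcal U\cup\{\omega\setminus A\}$ still has that property — the proof being a fusion/rank argument in $\mathbb M(\cdot)$ essentially as in Proposition \ref{nobound}, but applied to the two-step iteration $\mathbb P_{\beta,\zeta}*\mathbb M(\dot{\mathcal U}_{\beta,\zeta})$ viewed inside $M=V[\mathbb P_{\beta,\zeta}*\mathbb M(\dot{\mathcal U}_{\beta,\zeta})]$ against the real $\dot f_\alpha$. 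Concretely: given a condition $(p,(a,\dot B))$ and $n$, one uses the fact that Mathias conditions can be ``pure-decided'' — shrinking the infinite side $\dot B$ without changing the finite stem $a$ — to read off, inside $M$, a candidate value for $\dot g(k)$ below $f_\alpha(k)$ for some $k>n$; because $\dot f_\alpha$ is unbounded over $M$ this is always available, and the two-sides-of-$A$ splitting never blocks it because one of the two halves of $\dot B\cap A$, $\dot B\setminus A$ still meets every element of the filter.

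At successor stages of $\alpha$ this preservation argument is the whole game; at limit stages of $\alpha$ one must additionally check that $\dot{\mathcal U}_{\alpha,\zeta}:=\bigcup_{\beta<\alpha}\dot{\mathcal U}_{\beta,\zeta}$ is already an ultrafilter (not merely a filter) — this is where $\cf(\alpha)$ matters: if $\cf(\alpha)>\omega$ every real appears at some earlier $\beta$ and was decided there, so $\dot{\mathcal U}_{\alpha,\zeta}$ is automatically an ultrafilter and (5) is inherited; if $\cf(\alpha)=\omega$ (or $\alpha$ successor) one still has to run the length-$\cee$ internal recursion to make it an ultrafilter, again using the splitting lemma above at each step. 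Finally, item (6) — that the enlarged family is a $\kappa\ttimes(\zeta{+}1)$-matrix iteration — is exactly the situation of clause (2)$_{\mathbb Q}$ of Proposition \ref{easyStep} (here with $\alpha=0$, all $\dot{\mathbb Q}_{\beta,\zeta}=\mathbb M(\dot{\mathcal U}_{\beta,\zeta})$, $\sigma$-centered), together with the commutativity/coherence needed for $\mathbb P_{\alpha,\zeta}*\mathbb M(\dot{\mathcal U}_{\alpha,\zeta})\cprec\mathbb P_{\beta,\zeta}*\mathbb M(\dot{\mathcal U}_{\beta,\zeta})$, which follows from (4) ($\dot{\mathcal U}_{\beta,\zeta}$ extends $\dot{\mathcal U}_{\alpha,\zeta}$) plus $\mathbb P_{\alpha,\zeta}\cprec\mathbb P_{\beta,\zeta}$; one checks directly that a $\mathbb P_{\alpha,\zeta}$-reduct of a condition, paired with the same Mathias second coordinate, is a reduct in the product. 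I expect the main obstacle to be the bookkeeping in the length-$\cee$ internal recursion needed to turn the coherent filter into a coherent \emph{ultra}filter while simultaneously preserving non-domination across all the two-step iterations $\mathbb P_{\beta,\zeta}*\mathbb M(\dot{\mathcal U}_{\beta,\zeta})$ for $\beta<\alpha$ — i.e. making the splitting/preservation lemma uniform enough in the names $\dot g$ that a single diagonalization handles all $\beta<\alpha$ at once.
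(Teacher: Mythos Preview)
The paper does not prove this proposition; it is stated with attribution to \cite{BlassShelah} and no argument is given. So there is no ``paper's own proof'' to compare against, and the question is only whether your outline is a correct reconstruction of the Blass--Shelah construction.

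It is, in essentials: the double recursion (outer on $\alpha<\kappa$, inner of length $\mathfrak c$ to extend the coherent filter to an ultrafilter), the treatment of limits of uncountable cofinality via condition (2) of Definition~\ref{matrix}, and above all the ``one side preserves non-domination'' splitting lemma are exactly the ingredients. Two small corrections. First, your appeal to clause (2)${}_{\mathbb Q}$ of Proposition~\ref{easyStep} for item (6) is misplaced: that clause is about a \emph{single} $\sigma$-centered poset used from some level $\alpha$ upward, whereas here the $\mathbb M(\dot{\mathcal U}_{\beta,\zeta})$ vary with $\beta$. The correct verification of $\mathbb P_{\alpha,\zeta}\ast\mathbb M(\dot{\mathcal U}_{\alpha,\zeta})\cprec\mathbb P_{\beta,\zeta}\ast\mathbb M(\dot{\mathcal U}_{\beta,\zeta})$ is the direct reduct argument you sketch afterward (and is the Mathias analogue of the paper's Proposition~5.1 for $\mathbb L$). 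Second, ``fusion/rank'' is not quite the right picture for the preservation step in $\mathbb M(\mathcal U)$: there is no fusion here, only the pure-extension property (shrink the second coordinate without touching the stem) which lets you define, inside $V[\mathbb P_{\beta,\zeta}]$, a function $h$ with $h(k)$ a possible value of $\dot g(k)$ below a fixed condition; unboundedness of $\dot f_\alpha$ over that model then gives the witness, exactly as in Proposition~\ref{nobound}. Your identification of the genuine difficulty---making the one-side-works lemma uniform over all $\beta<\alpha$ and all names $\dot g$ so that a single length-$\mathfrak c$ diagonalization suffices---is accurate.
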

 
We record two more well-known preparatory preservation results.

\begin{proposition}[\cite{BaumDordal}]
Suppose that $M\subset N$ are models of
(a sufficient amount of) set-theory\label{Dsplit}
and that $G$ is $\mathbb D$-generic over $N$. If
$x\in N\cap[\omega]^\omega$ does not include
any $y\in M\cap [\omega]^\omega$, it will
not include any $y\in M[G]\cap [\omega]^\omega$.
 \end{proposition}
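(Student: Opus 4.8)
The plan is to follow the standard argument of \cite{BaumDordal}, which runs in the same spirit as the proofs of Propositions~\ref{nobound} and \ref{nosplit}: reason inside $M$ about what a name could be, while coping with the fact that the Hechler poset of $N$ is strictly larger than that of $M$. Write $\mathbb D^M$ and $\mathbb D^N$ for the versions of $\mathbb D$ computed in the two models. Since $\mathbb D$ has Borel order and incompatibility relations and is provably $\sigma$-centered, every maximal antichain of $\mathbb D^M$ is countable, and the statement that a given countable set is a maximal antichain of $\mathbb D$ is $\Pi^1_1$ in a real parameter coding that set; hence maximal antichains of $\mathbb D^M$ remain maximal in $\mathbb D^N$, so $\mathbb D^M\cprec\mathbb D^N$, the filter $G\cap\mathbb D^M$ is $\mathbb D^M$-generic over $M$, and $M[G]$ abbreviates $M[G\cap\mathbb D^M]$. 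Every infinite member of $M[G]$ is the interpretation of some $\mathbb D^M$-name in $M$, so it suffices to fix such a name $\dot y$ and show that $\mathbb D^N$ forces ``$\dot y\in[\omega]^\omega$ implies $\dot y\not\subseteq\check x$''. Suppose not, and let $(s,f)\in\mathbb D^N$ force ``$\dot y\in[\omega]^\omega$ and $\dot y\subseteq\check x$''; note that $\check x$ is a legitimate $\mathbb D^N$-name since $x\in N$, but it is \emph{not} a $\mathbb D^M$-name, which is precisely why one cannot argue directly inside $M$.

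Next I would pass to a $\mathbb D^M$-reduct $p\in\mathbb D^M$ of $(s,f)$, which exists because $\mathbb D^M\cprec\mathbb D^N$. A reduct inherits every statement about a $\mathbb D^M$-name forced by $(s,f)$: if some $r\le_{\mathbb D^M}p$ forced the negation of such a statement, then $r$ would force that negation over $\mathbb D^N$ as well, while being compatible with $(s,f)$ by the definition of a reduct --- a contradiction. In particular $p\Vdash_{\mathbb D^M}\dot y\in[\omega]^\omega$. Working inside $M$, set $T_p=\{t\in\omega^{<\omega}:(t,g)\le_{\mathbb D^M}p\text{ for some }g\}$ and, for $t\in T_p$,
$$Y_t=\{m\in\omega:\text{ for some }g,\ (t,g)\le_{\mathbb D^M}p\text{ and }(t,g)\Vdash_{\mathbb D^M}m\in\dot y\},$$
and let $y^*=\bigcup\{Y_t:t\in T_p\}$. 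All of these are definable in $M$, so $y^*\in M$. Because $p$ forces $\dot y$ infinite, for every $n$ there are $(t,g)\le_{\mathbb D^M}p$ and $m>n$ with $(t,g)\Vdash_{\mathbb D^M}m\in\dot y$; then $t\in T_p$ and $m\in Y_t\subseteq y^*$, so $y^*$ is infinite.

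Now the hypothesis on $x$ applies: there is $k\in y^*\setminus x$, say $k\in Y_t$ as witnessed by some $(t,g)\le_{\mathbb D^M}p$ with $(t,g)\Vdash_{\mathbb D^M}k\in\dot y$. Since $p$ is a $\mathbb D^M$-reduct of $(s,f)$ and $(t,g)\le_{\mathbb D^M}p$, the conditions $(t,g)$ and $(s,f)$ are compatible in $\mathbb D^N$; pick $(v,e)\le_{\mathbb D^N}(t,g),(s,f)$. As $\dot y$ is a $\mathbb D^M$-name we have $(t,g)\Vdash_{\mathbb D^N}k\in\dot y$, hence $(v,e)\Vdash_{\mathbb D^N}k\in\dot y$; but $(v,e)\le_{\mathbb D^N}(s,f)$ forces $\dot y\subseteq\check x$, so $(v,e)\Vdash_{\mathbb D^N}k\in\check x$, i.e.\ $k\in x$ --- contradicting $k\in y^*\setminus x$. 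The one genuinely delicate point is the passage across $\mathbb D^M\cprec\mathbb D^N$: it furnishes the reduct $p$ and lets forcing facts about the $M$-name $\dot y$ move between the two posets, and it is the reason the (Borel) Hechler poset sits inside matrix iterations at all. Everything else is bookkeeping about stems.
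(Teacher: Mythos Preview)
The paper does not supply its own proof of this proposition; it is simply recorded with a citation to \cite{BaumDordal}. Your argument is correct. The essential points --- that $\mathbb D^M\cprec\mathbb D^N$ because $\mathbb D$ is Suslin ccc (so maximality of a countable antichain is $\Pi^1_1$ and hence upward absolute), that a $\mathbb D^M$-reduct $p$ of $(s,f)$ inherits over $\mathbb D^M$ any statement about the $\mathbb D^M$-name $\dot y$ that $(s,f)$ forces over $\mathbb D^N$, and that the set $y^*$ of all integers forced into $\dot y$ by some extension of $p$ is an infinite member of $M$ --- are all handled properly, and the contradiction via a common extension of $(t,g)$ and $(s,f)$ is sound. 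Your proof is in fact the natural two-model elaboration of the paper's Propositions~\ref{nobound} and~\ref{nosplit}: those handle posets contained in $M$, and the reduct across the complete embedding is exactly what is needed to cope with $\mathbb D^N\not\subseteq M$.
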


\begin{proposition}
 Assume that $\{ P_\alpha : \alpha \leq \delta\}$ is a $\cprec$-increasing
 chain of ccc posets with $P_\delta = \bigcup \{ P_\alpha
  :\alpha < \delta\}$.
Let $G_\delta$ be $P_\delta$-generic.
 Let $x\in [\omega]^\omega$ and $f\in \omega^\omega$. Then each
 of the\label{FSpreserve} following hold:
 
\begin{enumerate}
 \item If $f \not\leq g$ for each $g\in V[G_\alpha]$ for all $\alpha<\delta$,
  then $f\not \leq g$ for each $g\in V[G_\delta]$.
  \item If $x$ does not contain any $y\in [\omega]^\omega\cap V[G_\alpha]$
  for all $\alpha<\kappa$, then $x$ does not contain any
   $y\in [\omega]^\omega\cap V[G_\delta]$.
\end{enumerate}
\end{proposition}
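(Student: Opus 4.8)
The plan is to obtain both parts as the ``union of a chain'' forms of the single-step preservation results already in hand --- Proposition~\ref{nobound} for (1) and Proposition~\ref{nosplit} for (2). I would first dispose of the case $\cf(\delta)\neq\omega$: since $P_\delta$ is the union of a $\cprec$-increasing chain of ccc posets it is ccc, so a nice name for any $g\in V[G_\delta]\cap\omega^\omega$ uses only countably many antichains, each countable, all of whose conditions therefore lie in a single $P_\alpha$ with $\alpha<\delta$; hence $g\in V[G_\alpha]$, and symmetrically every $y\in V[G_\delta]\cap[\omega]^\omega$ lies in some $V[G_\alpha]$. In that case (1) and (2) are immediate from the hypotheses, so the content is the case $\cf(\delta)=\omega$.

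So assume $\cf(\delta)=\omega$. Choosing an increasing cofinal $\langle\delta_n:n<\omega\rangle$ in $\delta$ and using transitivity of $\cprec$ together with $P_\delta=\bigcup_n P_{\delta_n}$, I would replace $\delta$ by $\omega$. For (1), fix $g\in V[G_\omega]\cap\omega^\omega$ with nice $P_\omega$-name $\dot g$, and fix $n<\omega$. The key move is to pass to $V[G_n]$: there the quotient poset $P_\omega/G_n$ and the induced quotient name $\dot g_n$ (whose interpretation by $G_\omega/G_n$ is $g$) both belong to $V[G_n]$, and $G_\omega/G_n$ is $(P_\omega/G_n)$-generic over $V[G_n]$ with $V[G_n][G_\omega/G_n]=V[G_\omega]$. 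Since $f$ is not dominated by any member of $V[G_n]\cap\omega^\omega$, Proposition~\ref{nobound}, applied with the model $V[G_n]$, the poset $P_\omega/G_n$ and the name $\dot g_n$, yields that $P_\omega/G_n$ forces $f\not\leq\dot g_n$; interpreting by $G_\omega/G_n$ gives $f\not\leq g$. As $g$ was arbitrary, $f$ is undominated over $V[G_\omega]=V[G_\delta]$, which is (1). Part (2) is the same argument with $x$, with ``$y\not\subseteq x$'' in place of ``$f\not\leq g$'', and with Proposition~\ref{nosplit} in place of Proposition~\ref{nobound} --- this is precisely the ``same proof for splitting'' already signalled in the text.

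I expect the limit case just treated to be the only real obstacle: when $\cf(\delta)=\omega$ a real of $V[G_\delta]$ need not be added at any intermediate stage, so one cannot simply invoke Proposition~\ref{nobound}/\ref{nosplit} at some single $P_\alpha$. The device that circumvents this is to absorb the entire tail forcing $P_\delta/G_n$ into $V[G_n]$ and apply the single-step result there; an equivalent route is to re-run the density argument from the proof of Proposition~\ref{nobound} directly at the level of $P_\delta=\bigcup_n P_n$, the point being that the dense sets witnessing ``$\dot g$ does not dominate $f$'' are definable from the name $\dot g$ and from $f$, hence are met by the generic $G_\delta$. In writing this up I would take care to check that the reinterpretation step preserves the exact hypotheses of Proposition~\ref{nobound}: that $P_\delta/G_n$ and $\dot g_n$ genuinely lie in $V[G_n]$, and that ``$f$ is undominated by the reals of $V[G_n]$'' is literally the standing hypothesis for each $n<\delta$ --- and likewise for the splitting statement with Proposition~\ref{nosplit}.
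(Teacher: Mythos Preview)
Your proposal is correct and essentially the same as the paper's argument. The only difference is packaging: you pass to the quotient $P_\delta/G_\alpha$ in $V[G_\alpha]$ and invoke Proposition~\ref{nobound} (resp.\ \ref{nosplit}) as a black box, whereas the paper takes the ``equivalent route'' you yourself describe---it fixes a condition $p$ and a canonical name $\dot g$, picks $\alpha$ with $p\in P_\alpha$, and in $V[G_\alpha]$ directly builds a function $h$ using $P_\alpha$-reducts of the antichains in $\dot g$, then uses that $f\not\le h$ to find a coordinate where $p$ cannot force $f(n)\le\dot g(n)$. Both arguments amount to the same density computation carried out in an intermediate model.
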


\begin{proof} 
We prove only (1) since the proof of (2) is similar.
If $\delta$ has uncountable cofinality, then there
is nothing to prove since $V[G_\delta]\cap \omega^\omega$
would then equal $\bigcup \{ V[G_\alpha]\cap \omega^\omega :
 \alpha < \delta\}$. Otherwise, consider
 any     $P_\delta$-name $\dot g$ 
 and condition 
  $p\in P_\delta $ forcing
  that $\dot g\in \omega^\omega$.
    We prove that $p$ does not force
 that $\dot g(n)>f(n)$ for all $n>k$. We may assume
 that $\dot g$ is a canonical name, 
 so 
let $\dot g  =
 \bigcup\{ (\widecheck{n,m}) \ttimes A_{n,m} : n,m\in\omega\ttimes
 \omega\}$.  
 Choose any $\alpha<\delta$
 so that $p\in P_\alpha$ and work in $V[G_\alpha]$.  We
 define a function $h\in \omega^\omega\cap V[G_\alpha]$. 
 For each $n\in \omega$, we set $h(n)$ to be the minimum
 $m$ such that there is $q_{n,m}\in A_{n,m}$ 
 having a $P_\alpha$-reduct
 $p_{n,m}\in 
 G_\alpha$.  Since $A_n  = \bigcup\{ A_{n,m} : m\in\omega\}$ is predense
 in $P_\kappa$, the set of $P_\alpha$-reducts of members of $A_n$
 is predense in $P_\alpha$. By hypothesis, there is a $k<n$
 such that $h(n)<f(n)$. Since $q_{n,h(n)}$ is compatible with
  $p$, this prove that $p\not\Vdash \dot g(n)>f(n)$.    
\end{proof}

\section{Building the models to
  distinguish $\mathfrak h, \mathfrak b , \mathfrak s$}

For simplicity we assume GCH.
Let $\aleph_1 \leq \mu< \kappa < \lambda$ be regular cardinals and
assume that $\theta>\lambda$ is a cardinal with cofinality $\mu$.
We will 
 need to enumerate
 names in order to force that $\mathfrak p \geq\mu$. For
 each ccc poset $\tilde P\in H(\theta^+)$ let 
 $\{ \dot Y(\tilde P,\xi) : \xi < \theta\}$ be an enumeration of the
 set of all canonical $\tilde P$-names of subsets of $\omega$. Also
 let $\{ S_\xi : \xi < \theta\}$ be an enumeration of
  all subsets of $\theta$ that have cardinality less than $\mu$. For
  each $\eta <\lambda$, let $\zeta_\eta$ denote the ordinal product
   $\theta\cdot  \eta$. 
 
\begin{theorem}
 There\label{main1}
  is a ccc
  poset  that forces $\mathfrak p = \mathfrak h =\mu$, 
  $\mathfrak b =\kappa$, $\mathfrak s = \lambda$ and $\mathfrak c = \theta$.
\end{theorem}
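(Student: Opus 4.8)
The plan is to build a $\kappa\ttimes\zeta_\lambda$-matrix iteration, where $\zeta_\lambda:=\theta\cdot\lambda=\sup_{\eta<\lambda}\zeta_\eta$, by recursion on the column $\zeta\le\zeta_\lambda$, and to let the required poset be $\mathbb P_{\kappa,\zeta_\lambda}$ (a ccc poset by Lemma~\ref{limit}, hence cardinal- and cofinality-preserving). Simultaneously I build a sequence $\langle\dot f_\alpha:\alpha<\kappa\rangle$, each $\dot f_\alpha$ a $\mathbb P_{\alpha,\zeta_\lambda}$-name forced into $\omega^\omega$, maintaining throughout the invariant $(\star)$: for $\beta<\alpha\le\kappa$ and $\xi\le\zeta_\lambda$, $\dot f_\alpha$ is not $<^*$-dominated by any element of $V[\mathbb P_{\beta,\xi}]\cap\omega^\omega$. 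A bookkeeping function splits the columns into four kinds of tasks. \emph{Type (i):} at column $\alpha$ (for $\alpha<\kappa$) apply option (2)$_{\mathbb Q}$ of Proposition~\ref{easyStep}, inserting Hechler forcing $\mathbb D$ from row $\alpha$ upwards, and take $\dot f_\alpha$ to be the generic at row $\alpha$; since rows below $\alpha$ are untouched, $(\star)$ holds for this $\alpha$ at that column. \emph{Type (ii):} for each fixed $\alpha<\kappa$, at a set of columns cofinal in $\zeta_\lambda$, again insert $\mathbb D$ from row $\alpha$ upwards — these will bound the small unbounded candidates. \emph{Type (iii):} at each column $\zeta_\eta$ ($\eta<\lambda$) apply Proposition~\ref{bsplit} — legitimate precisely because $(\star)$ supplies its hypothesis~(2) — to adjoin the coherent Mathias towers $\mathbb M(\dot{\mathcal U}_{\alpha,\zeta_\eta})$, and write $\dot x_\eta$ for the Mathias real at the top row $\kappa$, whose ultrafilter $\dot{\mathcal U}_{\kappa,\zeta_\eta}=\bigcup_{\alpha<\kappa}\dot{\mathcal U}_{\alpha,\zeta_\eta}$ is an ultrafilter of $V[\mathbb P_{\kappa,\zeta_\eta}]$; then at column $\zeta_\eta{+}1$ insert a Cohen real $\dot c_\eta$ at all rows via option (2)$_{\mathbb Q}$. \emph{Type (iv):} at the remaining columns run the $\sigma$-centered Mathias--Prikry posets $\mathbb M(\dot{\mathcal F})$ (inserted from a suitable row upwards) used to force $\mathfrak p\ge\mu$, with $\dot{\mathcal F}$ built from the enumerations $\langle\dot Y(\tilde P,\xi)\rangle$ of canonical names and $\langle S_\xi\rangle$ of $[\theta]^{<\mu}$. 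Limit columns are handled by Lemma~\ref{limitMatrix} and the top column by Lemma~\ref{limit}.

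The invariant $(\star)$ survives every step. Across successor columns of types (i), (ii), (iv) and the Cohen insertions, the forcing used at row $\beta<\alpha$ is either trivial (so $(\star)$ is literally unchanged there) or the same poset, up to $\cprec$, as the one used at row $\alpha$; then the generic for it is generic over $V[\mathbb P_{\alpha,\xi}]\ni\dot f_\alpha$, so the argument of Proposition~\ref{nobound} — which uses only that the poset lies in the model, not that it is $\mathbb D$ — keeps $\dot f_\alpha$ undominated over the row-$\beta$ extension. Across the Mathias-tower columns this is item~(5) of Proposition~\ref{bsplit}, and across limit columns it is Proposition~\ref{FSpreserve}(1). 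Granting $(\star)$ the cardinal computations are cofinality arguments. For $\mathfrak c$: since GCH and $\cf\theta=\mu>\omega$ give $\theta^{\aleph_0}=\theta^{<\mu}=\theta$, one has $|\mathbb P_{\kappa,\zeta_\lambda}|=\theta$, so $2^{\aleph_0}\le\theta$, while the $\theta$ distinct generic reals of the first block force $2^{\aleph_0}\ge\theta$. For $\mathfrak b$: $\{\dot f_\alpha:\alpha<\kappa\}$ is $<^*$-unbounded, since any $g\in V[\mathbb P_{\kappa,\zeta_\lambda}]\cap\omega^\omega$ lies in some $V[\mathbb P_{\alpha_0,\zeta_\lambda}]$ (as $\cf\kappa>\omega$) and then $\dot f_{\alpha_0+1}\not<^*g$ by $(\star)$; conversely, the members of any family $B$ of fewer than $\kappa$ reals all lie in some $V[\mathbb P_{\alpha_0,\xi_0}]$ with $\xi_0<\zeta_\lambda$ (using $\cf\kappa=\kappa>|B|$ and $\cf\zeta_\lambda=\lambda>|B|$), and a type-(ii) Hechler real added from row $\alpha_0$ at a later column dominates all of $V[\mathbb P_{\alpha_0,\xi_0}]$, hence bounds $B$. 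Thus $\mathfrak b=\kappa$.

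For $\mathfrak s$: given a family $X$ of fewer than $\lambda$ reals, its members all lie in some $V[\mathbb P_{\kappa,\xi_0}]$ ($\cf\zeta_\lambda=\lambda>|X|$); pick $\eta<\lambda$ with $\zeta_\eta\ge\xi_0$. Then $\dot x_\eta$ is a pseudo-intersection of the ultrafilter $\dot{\mathcal U}_{\kappa,\zeta_\eta}$ of $V[\mathbb P_{\kappa,\zeta_\eta}]\supseteq V[\mathbb P_{\kappa,\xi_0}]$, hence unsplit by every member of $X$, so $X$ is not splitting and $\mathfrak s\ge\lambda$. On the other hand $\{\dot c_\eta:\eta<\lambda\}$ is a splitting family of size $\lambda$: a Cohen real over a model splits every infinite set in that model, every infinite $b\in V[\mathbb P_{\kappa,\zeta_\lambda}]$ lies in some $V[\mathbb P_{\kappa,\zeta_\eta}]$, and $\dot c_\eta$, being Cohen over $V[\mathbb P_{\kappa,\zeta_\eta+1}]\supseteq V[\mathbb P_{\kappa,\zeta_\eta}]$, then splits $b$; so $\mathfrak s\le\lambda$, giving $\mathfrak s=\lambda$. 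Finally $\mathfrak h=\mathfrak p=\mu$. The type-(iv) steps force $\mathfrak p\ge\mu$: every sfip family of size less than $\mu$ in the final model already occurs, since $\cf\theta=\mu$ and $\mu<\kappa<\lambda=\cf\zeta_\lambda$, in some $V[\mathbb P_{\gamma_0,\xi_0}]$ with $\gamma_0<\kappa$, $\xi_0<\zeta_\lambda$, and the bookkeeping then adjoins a pseudo-intersection to it via $\mathbb M(\mathcal F)$ at a later column — this is Bell's theorem transplanted to the matrix, which is exactly what the two enumerations in the hypotheses are for. Since also $\mathfrak p\le\mathfrak h\le\cf(\mathfrak c)=\cf(\theta)=\mu$ by Corollary~\ref{2.2}(1), equality holds throughout.

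The real obstacle I expect is running all four task streams inside one recursion while keeping $(\star)$ intact — above all, checking that the type-(iii) columns genuinely extend the matrix iteration, which is the content of Proposition~\ref{bsplit}: the ultrafilter names $\dot{\mathcal U}_{\alpha,\zeta_\eta}$ must be constructed coherently ($\dot{\mathcal U}_{\beta,\zeta_\eta}\subseteq\dot{\mathcal U}_{\alpha,\zeta_\eta}$) so that $\mathbb P_{\beta,\zeta_\eta}*\mathbb M(\dot{\mathcal U}_{\beta,\zeta_\eta})\cprec\mathbb P_{\alpha,\zeta_\eta}*\mathbb M(\dot{\mathcal U}_{\alpha,\zeta_\eta})$, \emph{and} (item~(5)) so that the Mathias towers add no real dominating any $\dot f_\alpha$ over lower rows; the preparatory preservation facts (Propositions~\ref{nosplit}, \ref{Dsplit}, \ref{FSpreserve}) feed into this and into verifying that no splitting family of size less than $\lambda$ is created en route. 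The secondary delicate point is the $\mathfrak p\ge\mu$ bookkeeping — one must show that every sfip family of size less than $\mu$ in the final model is, up to a tail of the iteration, of the form $\{\dot Y(\tilde P,\xi):\xi\in S_j\}$ for an intermediate poset $\tilde P$, so that it is eventually caught by a $\sigma$-centered step — but here the configuration $\cf\theta=\mu$ is precisely what makes the standard argument apply.
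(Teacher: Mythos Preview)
Your proposal is correct and follows essentially the same approach as the paper: a $\kappa\times(\theta\cdot\lambda)$-matrix iteration with diagonal Hechler reals $\dot f_\alpha$ maintaining the unbounded invariant $(\star)$, cofinally repeated Hechler steps for $\mathfrak b\ge\kappa$, Mathias-with-ultrafilter columns at each $\zeta_\eta$ via Proposition~\ref{bsplit} for $\mathfrak s\ge\lambda$, and $\sigma$-centered Mathias--Prikry bookkeeping for $\mathfrak p\ge\mu$, with $\mathfrak h\le\mu$ coming from $\cf(\mathfrak c)=\mu$. The only cosmetic differences are that the paper places the type-(ii) Hechler steps at the specific columns $\zeta_\eta+\alpha$ rather than at an unspecified cofinal set, and that the paper does not explicitly insert Cohen reals for $\mathfrak s\le\lambda$ but instead observes that finite-support iteration already adds them between $\zeta_\eta$ and $\zeta_{\eta+1}$.
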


\begin{proof}
The poset  will be obtained by constructing a
 $\kappa\ttimes\zeta$-matrix iteration where $\zeta$ is the
ordinal product $\theta\cdot \lambda
 =\sup\{ \zeta_\eta : \eta < \lambda\}$.
We begin with the $\kappa\ttimes\kappa$-matrix iteration
   $$\langle\langle \mathbb P_{\alpha,\xi} : \alpha\leq\kappa,\xi\leq\kappa\rangle,
   \langle \dot{\mathbb Q}_{\alpha,\xi} : \alpha\leq
   \kappa,\xi<\kappa\rangle
   \rangle$$
   where, for each $\alpha<\kappa$, $\mathbb P_{\alpha,\alpha}$
   forces that $\dot{\mathbb Q}_{\alpha,\alpha}$ is $\mathbb D$,
   for $\beta<\alpha$, $\dot{\mathbb Q}_{\beta,\alpha}$ is the trivial poset,
   and for $\alpha\leq\beta\leq\kappa$, $\dot{\mathbb Q}_{\beta,\alpha}$
   equals $\dot{\mathbb Q}_{\alpha,\alpha}$. By Proposition
   \ref{easyStep}, there is such a matrix. For each $\alpha<\kappa$,
    let $\dot f_\alpha$ be the canonical name for the dominating
    real added by $\mathbb P_{\alpha,\alpha+1}$. 
    By Propositions \ref{nobound}
    and \ref{FSpreserve}, it follows that for all $\beta<\alpha<\kappa$,
     $\mathbb P_{\alpha,\kappa}$ forces that $\dot f_\alpha\not\leq\dot g$
     for all $\mathbb P_{\beta,\kappa}$-names $\dot g$ of elements
     of $\omega^\omega$.
     
   We omit the routine enumeration details involved
   in the recursive construction and state the properties we require
   of our $\kappa\ttimes\zeta$-matrix iteration. 
  Each step of the construction uses either (2) of
  Proposition \ref{easyStep}
  or Proposition \ref{bsplit}
  to choose the next sequence
   $\{\dot{\mathbb Q}_{\alpha,\xi} : \alpha \leq\kappa\}$.
    In the case of Proposition \ref{easyStep} (2),
   the preservation of inductive condition (1) follows from Proposition
    \ref{nobound}. The preservation through limit steps follows
    from Proposition \ref{FSpreserve}.
     
   There is a matrix-iteration
   sequence
   $$\langle\langle \mathbb P_{\alpha,\xi} : \alpha\leq\kappa,\xi\leq\zeta\rangle,
   \langle \dot{\mathbb Q}_{\alpha,\xi} : \alpha\leq\kappa,\xi<\zeta\rangle
   \rangle$$
   satisfying each of the following for each $\xi<\zeta$:
   
\begin{enumerate}
\item for each $\beta<\alpha<\kappa$ and each $\mathbb P_{\beta,\xi}$-name
 $\dot g$ for\label{b} an element of $\omega^\omega$, $\mathbb P_{\alpha,\xi}$
 forces that $\dot f_\alpha\not\leq \dot g$,
 \item for each $\beta<\lambda$ with $\zeta_{\beta+1}\leq\xi$ 
 and each $\eta<\theta$,
 if $\mathbb P_{\kappa,\zeta_\beta}$ forces\label{p} that the family
    $
  \mathcal F_{\beta,\eta} =   \{ \dot Y(\mathbb P_{\kappa,\zeta_\beta},\gamma) : \gamma \in S_\eta\}$    
    has the sfip, then there is a $\bar\eta<\zeta_{\beta+1}$ and an $\alpha<\kappa$
    such that $\dot{\mathbb Q}_{\beta,\bar\eta}$ equals the
     $\mathbb P_{\alpha,\bar\eta}$-name for 
     $\mathbb M(\mathcal F_{\beta,\eta})$
     for all $\alpha\leq\beta\leq\kappa$,
     \item for each $\beta<\lambda$ such that\label{s} $\zeta_\beta<\xi$,
   $\mathbb P_{\kappa,\zeta_\beta+1}$ equals
      $\mathbb P_{\kappa,\zeta_\beta}*\mathbb 
      M(\dot{\mathcal U}_{\kappa,\zeta_\beta})$
      and $\dot{\mathcal U}_{\kappa,\zeta_\beta}$ is a
      $\mathbb P_{\kappa,\zeta_\beta}$-name of an ultrafilter on $\omega$,
      \item for each $\eta<\lambda$ and\label{bb} each $\alpha<\kappa$ such
      that $\zeta_\eta <\xi$, then $\dot {\mathbb Q}_{\alpha,\zeta_\eta+\alpha}$
      is the $\mathbb P_{\alpha,\zeta_\eta+\alpha}$-name for $\mathbb D$,
      and $\dot {\mathbb Q}_{\beta,\zeta_\eta+\alpha} = 
      \dot {\mathbb Q}_{\alpha,\zeta_\eta+\alpha}$ for all $\alpha\leq\beta\leq \kappa$. 
 \end{enumerate}
 Now we verify that $P = \mathbb P_{\kappa,\zeta}$ has the desired properties. 
 Since $P$ is ccc, it preserves cardinals and clearly forces that
  $\mathfrak c = \theta$. It thus follows from Corollary \ref{2.2} that
   $\mathfrak p\leq \mathfrak h\leq \mu  =\operatorname{cf}(\mathfrak c)$. 
 If $\mathcal Y$ is a family of fewer than $\mu$ many canonical $P$-names
 of subsets of $\omega$, then there is an $\alpha<\kappa$ and $\eta<\lambda$
 such that $\mathcal Y$ is a family of $\mathbb P_{\alpha,\zeta_\eta}$-names.
 It follows that there is a $\beta<\theta$ such that $\mathcal Y$ 
 is equal to the set $\{ \dot Y(\mathbb P_{\kappa,\zeta_\beta},\gamma) : 
  \gamma\in S_\eta\}$. If $\mathbb P_{\kappa,\zeta_\beta}$ forces
  that $\mathcal Y$ has the sfip, then  
  inductive condition \ref{p}  ensures that there is a $P$-name 
  for a pseudo-intersection for $\mathcal Y$. This shows that $P$
  forces 
  that $\mathfrak p\geq\mu$.
 It is clear that inductive condition \ref{b} ensures that $\mathfrak b\leq\kappa$. 
 We check that condition \ref{bb} ensure that $\mathfrak b\geq\kappa$. 
 Suppose that $\mathcal G$ is a family of fewer than $\kappa$
 many canonical $P$-names
 of members of $\omega^\omega$. We again find
  $\eta<\lambda$ and $\alpha<\kappa$ such that $\mathcal G$
  is a family of $\mathbb P_{\alpha,\zeta_\eta}$-names. Condition
  \ref{bb} forces there is a function that dominates $\mathcal G$. 
  Finally we verify that condition \ref{s} ensures that $P$ forces
  that $\mathfrak s = \lambda$. If $\mathcal S$ is any family of
  fewer than $\lambda$-many canonical $P$-names of subsets of
   $\omega$, then there is an $\eta<\lambda$ such that
   $\mathcal S$ is a family of $\mathbb P_{\kappa,\zeta_\eta}$-names.
    Evidently, $\mathbb P_{\kappa,\zeta_\eta+1}$ adds a subset of
    $\omega$ that is not split by $\mathcal S$. There are a number
    of ways to observe that for each $\eta<\lambda$, $\mathbb P_{\kappa,
    \zeta_{\eta+1}}$ adds a real that is Cohen over the extension
    by $\mathbb P_{\kappa,\zeta_\eta}$. This ensures that 
    $P$ forces that $\mathfrak s\leq\lambda$.
\end{proof}

In the next result we proceed similarly except that we first
add $\kappa$ many Cohen reals and preserve that they are splitting.
We then cofinally add dominating reals with Hechler's $\mathbb D$
and again use small posets to ensure $\mathfrak p\geq \mu$. 

\begin{theorem}
 There\label{main2}
  is a ccc
  poset  that forces $\mathfrak p = \mathfrak h =\mu$, 
  $\mathfrak s =\kappa$, $\mathfrak b = \lambda$ and $\mathfrak c = \theta$.
\end{theorem}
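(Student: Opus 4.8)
The plan is to carry out the construction of Theorem~\ref{main1} with the roles of dominating and unsplitting reals interchanged, as the remark preceding this theorem indicates. Keep $\zeta=\theta\cdot\lambda=\sup\{\zeta_\eta:\eta<\lambda\}$ and build a $\kappa\ttimes\zeta$-matrix iteration whose $0$-th column is the finite-support product adding $\kappa$ Cohen reals, i.e.\ $\mathbb P_{\alpha,0}=\mathcal C_\alpha$ for $\alpha\le\kappa$; write $c_\alpha$ for the Cohen real gained in passing from row $\alpha$ to row $\alpha+1$. Recursing on $\xi<\zeta$ exactly as in Theorem~\ref{main1} (Proposition~\ref{easyStep} at successors, Lemmas~\ref{limitMatrix} and~\ref{limit} at limits), we arrange with appropriate bookkeeping that:
\begin{enumerate}
 \item[(a)] for every $\eta<\lambda$, step $\zeta_\eta$ is taken by $(1)_{\mathbb{Q}}$ of Proposition~\ref{easyStep}, so $\dot{\mathbb Q}_{\alpha,\zeta_\eta}=\mathbb D$ for all $\alpha\le\kappa$; let $\dot d_\eta$ be the Hechler real it adds;
 \item[(b)] inside each block $[\zeta_\eta,\zeta_{\eta+1})$ and for each $\alpha^*<\kappa$ there is a step $\xi$ taken by $(2)_{\mathbb{Q}}$ of Proposition~\ref{easyStep} with $\dot{\mathbb Q}_{\beta,\xi}$ trivial for $\beta<\alpha^*$ and $\dot{\mathbb Q}_{\beta,\xi}=\mathbb M(\dot{\mathcal U}_{\alpha^*,\xi})$ for $\alpha^*\le\beta\le\kappa$, where $\dot{\mathcal U}_{\alpha^*,\xi}$ is \emph{any} $\mathbb P_{\alpha^*,\xi}$-name of an ultrafilter on $\omega$;
 \item[(c)] exactly as in Theorem~\ref{main1}, a cofinal set of steps is taken by $(2)_{\mathbb{Q}}$ with the $\sigma$-centered posets $\mathbb M(\mathcal F)$ for the fewer-than-$\mu$-sized s.f.i.p.\ families $\mathcal F$ supplied by the enumerations $\{\dot Y(\tilde P,\xi)\}$ and $\{S_\xi\}$.
\end{enumerate}
Thus (b) plays the role that condition~\ref{bb} of Theorem~\ref{main1} played for $\mathfrak b$, but with the unsplitting posets $\mathbb M(\dot{\mathcal U})$ replacing Hechler posets; since $\mathfrak b$ is no longer to be kept small, no analogue of Proposition~\ref{bsplit} is needed and the ultrafilters may be chosen arbitrarily. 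As all three kinds of posets are $\sigma$-centered, every step is a legitimate matrix extension. Put $P=\mathbb P_{\kappa,\zeta}$; by GCH and the ccc $|P|=\theta$, so $\mathfrak c=\theta$ and hence $\mathfrak p\le\mathfrak h\le\cf(\mathfrak c)=\mu$ by Corollary~\ref{2.2}(1); step~(c) forces $\mathfrak p\ge\mu$ (a family of fewer than $\mu$ canonical names is in fact a family of $\mathbb P_{\alpha,\zeta_\eta}$-names for some $\alpha<\kappa$, $\eta<\lambda$, just as in Theorem~\ref{main1}), so $\mathfrak p=\mathfrak h=\mu$.

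For $\mathfrak b=\lambda$: since $\cf(\zeta)=\lambda>\omega$ and $P$ is ccc, every real of the extension is a $\mathbb P_{\kappa,\xi}$-name for some $\xi<\zeta$, so any fewer-than-$\lambda$ family of reals lies in $V[\mathbb P_{\kappa,\zeta_\eta}]$ for some $\eta<\lambda$ and is dominated by $\dot d_\eta$; thus $\mathfrak b\ge\lambda$. As $\{\dot d_\eta:\eta<\lambda\}$ is $\le^*$-increasing and cofinal, $\mathfrak d\le\lambda$, so $\mathfrak b=\lambda$; the unbounded reals added by the posets of (b) are irrelevant, since fewer than $\lambda$ of them already appear before some $\zeta_\eta$.

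For $\mathfrak s=\kappa$: one arranges (as is possible) that no name used at row $\alpha$ mentions the Cohen coordinate $\alpha$, so $c_\alpha$ remains Cohen-generic over $V[\mathbb P_{\alpha,\xi}]$ for every $\xi\le\zeta$; this is the sense in which the Cohen reals are ``preserved splitting''. Given an infinite $x$ in the extension, its name uses only countably many conditions of $P=\bigcup_{\alpha<\kappa}\mathbb P_{\alpha,\zeta}$, so $x\in V[\mathbb P_{\alpha^*,\xi^*}]$ for some $\alpha^*<\kappa$ and $\xi^*<\zeta$, whence $c_{\alpha^*}$ splits $x$; thus $\{c_\alpha:\alpha<\kappa\}$ is splitting and $\mathfrak s\le\kappa$. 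For $\mathfrak s\ge\kappa$, let $\mathcal S$ be a family of fewer than $\kappa$ infinite subsets of $\omega$; as above $\mathcal S\subseteq V[\mathbb P_{\alpha^*,\xi^*}]$ for some $\alpha^*<\kappa$, $\xi^*<\zeta$. Pick $\eta$ with $\zeta_\eta>\xi^*$ and let $\xi\in[\zeta_\eta,\zeta_{\eta+1})$ be the step furnished by (b) for the parameter $\alpha^*$. Forcing with $\dot{\mathbb Q}_{\kappa,\xi}=\mathbb M(\dot{\mathcal U}_{\alpha^*,\xi})$ adds a pseudo-intersection $x$ of the filter generated by $\dot{\mathcal U}_{\alpha^*,\xi}$; since that name is an ultrafilter in $V[\mathbb P_{\alpha^*,\xi}]\supseteq V[\mathbb P_{\alpha^*,\xi^*}]$, for every $Y\in[\omega]^\omega\cap V[\mathbb P_{\alpha^*,\xi^*}]$ one of $Y$, $\omega\setminus Y$ lies in the filter, so $x\subseteq^*Y$ or $x\subseteq^*\omega\setminus Y$. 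Hence $x$ is unsplit by $\mathcal S$, no fewer-than-$\kappa$ family is splitting, and $\mathfrak s=\kappa$.

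The main labour, as for Theorem~\ref{main1}, lies in the bookkeeping and in checking that interleaving the three kinds of steps---full Hechler from (a), the row-staggered unsplitting posets of (b), and the $\sigma$-centered pseudo-intersection posets of (c)---really yields a $\kappa\ttimes\zeta$-matrix iteration, together with the structural fact that each $c_\alpha$ is generic over all of row $\alpha$; with the machinery of Section~3 in hand this is routine. The one genuinely new point relative to Theorem~\ref{main1} is that the row-staggering in (b) is exactly what both forces $\mathfrak s\ge\kappa$ and prevents $\mathfrak s$ from exceeding $\kappa$, since the whole family of Cohen reals, spread over all $\kappa$ rows, is never caught at a single stage.
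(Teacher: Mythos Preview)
Your overall architecture matches the paper's: Cohen reals spread over the $\kappa$ rows to witness $\mathfrak s\le\kappa$, full Hechler steps at the $\zeta_\eta$'s for $\mathfrak b=\lambda$, row-staggered Mathias-with-ultrafilter steps for $\mathfrak s\ge\kappa$, and bookkept $\mathbb M(\mathcal F)$ steps for $\mathfrak p\ge\mu$. The difference between putting the Cohen reals all in column~$0$ versus along the diagonal is immaterial.

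There is, however, a real gap in your justification of $\mathfrak s\le\kappa$. You assert that one can arrange that $c_\alpha$ remains \emph{Cohen-generic} over $V[\mathbb P_{\alpha,\xi}]$ for every $\xi$, on the grounds that ``no name used at row $\alpha$ mentions the Cohen coordinate $\alpha$''. This is false once you perform a full Hechler step of type~$(1)_{\mathbb Q}$. At such a step the single Hechler real $d$ lies in $V[\mathbb P_{\beta,\xi+1}]$ for \emph{every} $\beta$ (the first coordinates of conditions are absolute), and since $d$ is generic for $\mathbb D^{V[\mathbb P_{\alpha+1,\xi}]}$ it dominates $c_\alpha$. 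So $V[\mathbb P_{\alpha,\xi+1}]$ already contains a function dominating $c_\alpha$, and $c_\alpha$ cannot be Cohen over that model. The inference from ``the iterand's name sits below row $\alpha+1$'' to ``$c_\alpha$ stays Cohen'' simply does not hold.

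What the paper actually maintains is the weaker inductive hypothesis (its condition~(\ref{s2})): for $\beta<\alpha$, $\mathbb P_{\alpha,\xi}$ forces that neither $\dot x_\alpha$ nor $\omega\setminus\dot x_\alpha$ contains any infinite set with a $\mathbb P_{\beta,\xi}$-name. This \emph{is} preserved step by step, but it needs three separate preservation lemmas rather than a single ``stays Cohen'' argument: Proposition~\ref{Dsplit} (from \cite{BaumDordal}) handles the full Hechler steps, Proposition~\ref{nosplit} handles the $(2)_{\mathbb Q}$ steps (the iterand lives entirely in one row, so it is a poset contained in the smaller model), and Proposition~\ref{FSpreserve} handles limits. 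Once you replace your Cohen-genericity claim by this splitting-preservation hypothesis and invoke those three results, the proof goes through exactly as you outline.
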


\begin{proof}
   We begin with the $\kappa\ttimes\kappa$-matrix iteration
 $$\langle\langle \mathbb P_{\alpha,\xi} : \alpha\leq\kappa,\xi\leq\kappa\rangle,
   \langle \dot{\mathbb Q}_{\alpha,\xi} : \alpha\leq\kappa,\xi<\kappa\rangle
   \rangle$$
   where
 $\mathbb P_{\alpha,\alpha}$
   forces that $\dot{\mathbb Q}_{\alpha,\alpha}$ is $\mathcal C_\omega$,
   for $\beta<\alpha$, $\dot{\mathbb Q}_{\beta,\alpha}$ is the trivial poset,
   and for $\alpha\leq\beta\leq\kappa$, $\dot{\mathbb Q}_{\beta,\alpha}$
   equals $\dot{\mathbb Q}_{\alpha,\alpha}$. We let
   $\dot x_\alpha$ denote the canonical Cohen real added by
    $\mathbb P_{\alpha,\alpha+1}$. Of course
     $\mathbb P_{\alpha,\alpha+1}$ forces that neither
      $\dot x_\alpha$ nor its complement include any infinite
      subsets of $\omega$ that have, for any $\beta<\alpha$,
       a $\mathbb P_{\beta,\alpha+1}$-name. 
      By Proposition \ref{FSpreserve}, the inductive
      condition \ref{s2} below holds for $\xi =\kappa$.

Then, proceeding as in the proof of Theorem \ref{main1},  
  we   just assert the existence of a $\kappa\ttimes\zeta$-matrix
   iteration    
   $$\langle\langle \mathbb P_{\alpha,\xi} : \alpha\leq\kappa,\xi\leq\zeta\rangle,
   \langle \dot{\mathbb Q}_{\alpha,\xi} : \alpha\leq\kappa,\xi<\zeta\rangle
   \rangle$$
   satisfying each of the following for each $\kappa\leq \xi<\zeta$:

\begin{enumerate}
 \item for each $\beta < \alpha<\kappa$, $\mathbb P_{\alpha,\xi}$
 forces that neither $\dot x_\alpha$ nor $\omega\setminus \dot x_\alpha$
 contains any infinite subset of $\omega$ that has a
  $\mathbb P_{\beta,\xi}$-name\label{s2},
 
 \item for each $\eta<\lambda$ with $\zeta_{\eta+1}\leq\xi$ 
 and each $\delta<\theta$,
 if $\mathbb P_{\kappa,\zeta_\eta}$ forces\label{p2} that the family
    $
  \mathcal F_{\eta,\delta} =   
  \{ \dot Y(\mathbb P_{\kappa,\zeta_\eta},\gamma) : \gamma \in S_\delta\}$    
    has the sfip, then there is a $\bar\delta<\zeta_{\eta+1}$ 
    and an $\alpha<\kappa$
    such that $\dot{\mathbb Q}_{\beta,\bar\delta}$ equals the
     $\mathbb P_{\alpha,\bar\delta}$-name for 
     $\mathbb M(\mathcal F_{\eta,\delta})$
     for all $\alpha\leq\beta\leq\kappa$,

      \item for each $\eta<\lambda$ and\label{ss2} each $\alpha<\kappa$ such
      that $\zeta_\eta <\xi$, then $\dot {\mathbb Q}_{\alpha,\zeta_\eta+\alpha}$
      is the $\mathbb P_{\alpha,\zeta_\eta+\alpha}$-name
      for      $\mathbb M(\dot{\mathcal U}_{\alpha,\zeta_\beta})$
           where 
       $\dot{\mathcal U}_{\alpha,\zeta_\beta}$ is a
      $\mathbb P_{\alpha,\zeta_\beta}$-name of an ultrafilter on $\omega$,
      and $\dot {\mathbb Q}_{\beta,\zeta_\eta+\alpha} = 
      \dot {\mathbb Q}_{\alpha,\zeta_\eta+\alpha}$ for all $\alpha\leq\beta\leq \kappa$. 

  \item for each $\eta<\lambda$ such that\label{b2} $\zeta_\eta<\xi$,
   $\mathbb P_{\kappa,\zeta_\eta+1}$ equals
      $\mathbb P_{\kappa,\zeta_\eta}*\mathbb D$,

\end{enumerate}
  Evidently conditions (\ref{p2}) and (\ref{ss2}) are similar and
  can be achieved while preserving condition (\ref{s2}) by
  Proposition \ref{easyStep} (2). The fact that $\mathbb P_{\kappa,
  \zeta_\eta}*\mathbb D$ preserves condition (\ref{s2}) follows
  from Proposition \ref{Dsplit}. Condition (\ref{s2}) ensures that
   $\mathfrak s\leq\kappa$, and by arguments similar to those
   in Theorem \ref{main1}, condition (\ref{ss2}) ensures that
    $\mathfrak s\geq \kappa$. The fact that $\mathfrak b=\lambda$
    (in fact $\mathfrak d =\lambda$) follows easily from 
    condition (\ref{b2}). 
 The facts that    that $\mathfrak c=\theta$,
  $\mathfrak p\geq \mu$ and $\mathfrak h =\mu$
 are proven exactly  as in Theorem \ref{main1}. 
\end{proof}

\section{On $(\kappa,\lambda)$-shattering}

In this section we prove, see Theorem 
\ref{noth}, that it is consistent
that strongly $(\kappa,\kappa^+)$-shattering families
exist.   We will use the method of matrix of posets
from Definition \ref{matrixposet} in which our
  main component   posets to raise
  the value of $\mathfrak s$ will be the Laver
style posets.
 We recall some notions and results
about these studied in  
\cites{Shelah2011,ssingular,Shelahpseudo}. Before proceeding 
we summarize
the rough idea of how we generalize the fundamental preservation technique
of a matrix iteration. In a $\kappa\times\kappa^+$-matrix iteration, one
may introduce a sequence
 $\{\dot a_\alpha : \alpha <\kappa\}$ of $P_{\kappa,1}$-names that
  have no infinite pseudointersection. With this fixed enumeration, one
  then recursively 
 ensures that, for $\gamma<\kappa^+$,
 no $P_{\alpha,\gamma}$-name will be a subset
  of $\dot a_\beta$ for any $\beta\geq\alpha$.
   In the construction introduced in \cite{Shelahpseudo},
   we instead continually add to the list a $P_{0,\gamma{+}1}$-name
    $\dot a_\gamma$ and at stage $\mu<\kappa^+$, we adopt
    a new enumeration of $\{ \dot a_\alpha : \alpha <\mu\}$ in
    order-type $\kappa$ (coherent with previous   enumerations) and again
    ensure that no $P_{\alpha,\mu+1}$-name is a subset of 
    any $\dot a_\beta$ for $\beta$ not listed before $\alpha$
    in this new $\mu$-th enumeration.
  We utilize a $\square$-principle to make
    these enumerations sufficiently coherent. The greater flexibility in
    the definition of $\kappa\times\kappa^+$-matrix of posets makes
    this possible.

\begin{proposition}[\cite{Shelah2011}*{1.9}] If $P\cprec P'$ are ccc posets,
and $\dot{\mathcal D}\subset \dot{\mathcal E}$ 
 are, respectively, a $P$-name and a $P'$-name, of 
 ultrafilters on $\omega$, then 
$P*\mathbb L(\dot{\mathcal D})\cprec P'*\mathbb L(\dot{\mathcal E})$.
\end{proposition}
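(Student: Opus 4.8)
Write $R_0 = P*\mathbb L(\dot{\mathcal D})$ and $R_1 = P'*\mathbb L(\dot{\mathcal E})$. The plan is to verify that the obvious inclusion $(p,\dot T)\mapsto(p,\dot T)$ exhibits $R_0\cprec R_1$. For this it is enough to check three things: that the inclusion is order preserving, that it preserves incompatibility, and that every element of $R_1$ has an $R_0$-reduct. Indeed, from these a maximal antichain of $R_0$ remains an antichain of $R_1$, and it stays maximal because any $r\in R_1$ is compatible with some member of $R_0$ below its reduct, hence with some member of the antichain; so every predense subset of $R_0$ is predense in $R_1$. The combinatorial backbone is the following feature of the posets $\mathbb L(\mathcal F)$, valid for \emph{any} proper filter $\mathcal F$ on $\omega$: two conditions $T_1,T_2\in\mathbb L(\mathcal F)$ are compatible precisely when $\sakne(T_1)$ and $\sakne(T_2)$ are comparable and the longer of them lies in the other tree, and in that case $T_1\cap T_2$ — with $\operatorname{Br}(T_1\cap T_2,t)=\operatorname{Br}(T_1,t)\cap\operatorname{Br}(T_2,t)\in\mathcal F$ — is their greatest lower bound. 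This criterion does not depend on which proper filter is used; and since $\dot{\mathcal D}\subset\dot{\mathcal E}$ forces the branch sets of an $\mathbb L(\dot{\mathcal D})$-condition to remain in the proper filter $\dot{\mathcal E}$, the inclusion $R_0\to R_1$ is well defined and clearly order preserving.

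To preserve incompatibility I would argue the contrapositive. Suppose $(q,\dot S)\le_{R_1}(p_1,\dot T_1),(p_2,\dot T_2)$ with $(p_i,\dot T_i)\in R_0$. Then $q$ forces that $\dot T_1\cap\dot T_2$ contains the $\mathbb L(\dot{\mathcal E})$-condition $\dot S$, so by the criterion it forces that the stems of $\dot T_1$ and $\dot T_2$ are comparable with the longer one in the other tree; as $q\le p_1,p_2$ also forces $\dot T_1,\dot T_2\in\mathbb L(\dot{\mathcal D})$, it forces $\dot T_1\cap\dot T_2\in\mathbb L(\dot{\mathcal D})$. Fix a $P$-reduct $q^\ast$ of $q$. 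No $p\le_P q^\ast$ can force over $P$ the negation of this stem condition, since such a $p$ is $P'$-compatible with $q$, which forces the condition; hence densely below $q^\ast$ the stem condition is forced. Since $q^\ast$ reduces $q$ and $q\le_{P'}p_1,p_2$, below any $P$-extension of $q^\ast$ there is a common $P$-lower bound of $p_1$ and $p_2$; combining these two facts yields $p\le_P q^\ast,p_1,p_2$ forcing $\dot T_1\cap\dot T_2\in\mathbb L(\dot{\mathcal D})$, so $(p,\dot T_1\cap\dot T_2)$ is a common lower bound of $(p_1,\dot T_1)$ and $(p_2,\dot T_2)$ in $R_0$.

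The heart of the matter, and the step I expect to be the main obstacle, is producing reducts. It suffices to do this for $(q,\dot S)\in R_1$ in the dense set on which $q$ decides $\sakne(\dot S)=t_0$. Let $q^\ast$ be a $P$-reduct of $q$, and let $\dot T^\ast$ be the $P$-name for the tree consisting of all $t\subseteq t_0$ together with all $t\supseteq t_0$ such that some $r\le_{P'}q$ with $r\Vdash_{P'}t\in\dot S$ has its $P$-reduct in the generic filter. One verifies that $q^\ast$ forces $\dot T^\ast\in\mathbb L(\dot{\mathcal D})$: the key input is that $\dot{\mathcal D}\subset\dot{\mathcal E}$ with $\dot{\mathcal D}$ an ultrafilter forces $\dot{\mathcal D}$ to be exactly the restriction of $\dot{\mathcal E}$ to the subsets of $\omega$ lying in the $P$-extension, so that for each node $t$ of $\dot T^\ast$ the set $\operatorname{Br}(\dot T^\ast,t)$ — which lies in the $P$-extension and, after passing to a $P'$-generic extending a witnessing $r$, is seen to contain the set $\operatorname{Br}(\dot S,t)\in\dot{\mathcal E}$ — cannot have its complement in $\dot{\mathcal D}$ and hence, $\dot{\mathcal D}$ being an ultrafilter, belongs to $\dot{\mathcal D}$. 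Finally $(q^\ast,\dot T^\ast)$ reduces $(q,\dot S)$: given $(p',\dot V)\le_{R_0}(q^\ast,\dot T^\ast)$ with $p'$ deciding $\sakne(\dot V)=s$, we have $t_0\subseteq s$ and $p'$ forces $s\in\dot T^\ast$, so the $P$-reducts of the conditions $r\le_{P'}q$ with $r\Vdash_{P'}s\in\dot S$ form a predense set below $p'$; choosing such an $r$ compatible with $p'$ gives $r''\le_{P'}p',q$ with $r''\Vdash_{P'}s\in\dot S$, and then $r''$ forces $\dot V\cap\dot S$ to be an $\mathbb L(\dot{\mathcal E})$-condition below both $\dot V$ and $\dot S$, witnessing compatibility in $R_1$. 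I do not expect a shortcut through the (false in general) statement $\Vdash_{P'}\mathbb L(\dot{\mathcal D})\cprec\mathbb L(\dot{\mathcal E})$; the argument must interlace $P\cprec P'$ — used to push $P$-conditions into the $P'$-generic part of $\dot S$ — with the ultrafilter hypothesis on $\dot{\mathcal D}$ — used to see that the shadow tree $\dot T^\ast$ is a legitimate $\mathbb L(\dot{\mathcal D})$-condition — and getting the name bookkeeping for $\dot T^\ast$ exactly right is where the real work lies.
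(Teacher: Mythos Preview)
The paper does not prove this proposition; it is quoted without proof from \cite{Shelah2011}*{1.9}. Your argument is correct and is essentially the standard one from that source: the inclusion is order-preserving, incompatibility is preserved (your step of repeatedly extending below a $P$-reduct of a common $P'$-lower bound to absorb $p_1$ and $p_2$ works because $\perp_P\subset\perp_{P'}$), and the reduct $(q^\ast,\dot T^\ast)$ is built exactly as you describe, with the ultrafilter hypothesis on $\dot{\mathcal D}$ together with $\dot{\mathcal D}\subset\dot{\mathcal E}$ doing the real work in showing the branching sets of the shadow tree land in $\dot{\mathcal D}$.
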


\begin{definition}
 A family $\mathcal A\subset[\omega]^\omega$ is thin over a
 model $M$ if for every $I$ in the ideal generated by $\mathcal A$
 and every  infinite family $\mathcal F\in M$ consisting of
 pairwise disjoint finite sets of bounded size, 
 $I$ is disjoint from some member of $\mathcal F$.
\end{definition}

   It is routine to prove that, for each limit ordinal $\delta$,
    $\mathcal C_\delta$ forces
   that the family $\{ \dot x_\alpha : \alpha\in  \delta\}$,
   as defined above,
   is thin over the ground model. In fact if $\mathcal A$ is 
   thin over some model $M$, then $\mathcal C_\delta$
   forces that $\mathcal A\cup \{\dot x_\alpha :\alpha\in \delta\}$
   is also thin over $M$.    This is the notion we use
   to control that property (1) of the definition of a
    $(\kappa,\kappa^+)$-shattering sequence 
   will be preserved while at the same time raising
   the value of $\mathfrak s$.

We first note that Proposition  \ref{nosplit} extends
to include this concept.

\begin{proposition}
Suppose that   $M $ is a model of
(a sufficient amount of) set-theory\label{nullstep}
and that $\mathcal A\subset[\omega]^\omega$ is
thin over $M$. Then for any poset $P$ such
that $P\in M$ and $P\subset M$, $\mathcal A$
is thin over the forcing extension by $P$.
   \end{proposition}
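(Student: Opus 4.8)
The plan is to follow the pattern of the proof of Proposition~\ref{nobound} (and its analogue Proposition~\ref{nosplit}): since the poset $P$ and all of its conditions lie in $M$, and since the family we must deal with is named by an element of $M$, the entire ``search'' can be carried out inside $M$, after which the hypothesis that $\mathcal A$ is thin over $M$ is applied once, to a family of finite sets that we have arranged to belong to $M$.

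So let $V\supseteq M$ be the universe, let $G$ be $P$-generic, and let $\mathcal F\in M[G]$ be an infinite family of pairwise disjoint finite subsets of $\omega$, all of size at most some $k\in\omega$. Pick a $P$-name $\dot{\mathcal F}\in M$ with $\dot{\mathcal F}^G=\mathcal F$ and, by genericity, a condition $p_0\in G$ forcing that $\dot{\mathcal F}$ is an infinite family of pairwise disjoint finite subsets of $\omega$ each of size at most $k$. On the other side, let $I$ lie in the ideal generated by $\mathcal A$. Every such $I$ is contained in a finite union $A_1\cup\dots\cup A_n$ of members of $\mathcal A$, and any set disjoint from that union is disjoint from $I$, so it suffices to treat $I=A_1\cup\dots\cup A_n$ with each $A_i\in\mathcal A$; in particular such an $I$ already lies in $V$. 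I will show that $p_0$ forces that some member of $\dot{\mathcal F}$ is disjoint from $\check I$; since $p_0\in G$, this places a member of $\mathcal F$ disjoint from $I$ into $M[G]$, which is exactly what ``$\mathcal A$ is thin over $M[G]$'' demands.

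For the density argument, fix $p_1\le p_0$. I then work inside $M$ --- legitimately, since $P$, $p_1$, $\dot{\mathcal F}$, $k$ all belong to $M$ and the forcing relation for $P$ is absolute between $M$ and $V$ --- to build by recursion on $m\in\omega$ integers $0=N_{-1}<N_0<N_1<\dots$, finite sets $F^*_m$ with $\min F^*_m>N_{m-1}$, $|F^*_m|\le k$ and $N_m=\max F^*_m+1$, together with conditions $q_m\le p_1$ satisfying $q_m\Vdash F^*_m\in\dot{\mathcal F}$. At stage $m$ this is possible because $p_0$ forces $\dot{\mathcal F}$ to be infinite with pairwise disjoint members, hence forces that at most $N_{m-1}+1$ of its members can meet $[0,N_{m-1}]$ and therefore that some member has minimum above $N_{m-1}$; one then drops to a condition below $p_1$ deciding a specific such member $F^*_m$. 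The resulting family $\mathcal F^*=\{F^*_m:m\in\omega\}$ lies in $M$ and is an infinite family of pairwise disjoint finite sets of size at most $k$. Applying that $\mathcal A$ is thin over $M$, there is an $m$ with $F^*_m\cap I=\emptyset$. Then $q_m\le p_1$, $q_m\Vdash F^*_m\in\dot{\mathcal F}$, and $q_m\Vdash\check F^*_m\cap\check I=\emptyset$ (the last statement being simply true in $V$). As $p_1\le p_0$ was arbitrary, $p_0$ forces that $\dot{\mathcal F}$ has a member disjoint from $\check I$, which completes the argument.

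The single point that requires care --- and it is the same one underlying Proposition~\ref{nobound} --- is that the recursion producing $\mathcal F^*$ must take place inside $M$: this is why we needed $\dot{\mathcal F}\in M$ (guaranteed by $\mathcal F\in M[G]$), $P\subseteq M$ so that the conditions $q_m$ are automatically in $M$, and the absoluteness of $\Vdash_P$ between $M$ and $V$. Granting that, the remaining ingredients --- the reduction of $I$ to a finite union of members of $\mathcal A$, the elementary observation that an infinite pairwise-disjoint family of uniformly bounded finite sets has members of arbitrarily large minimum, and the single invocation of thinness over $M$ --- are all routine.
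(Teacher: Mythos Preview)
Your proof is correct and follows essentially the same approach as the paper's: work inside $M$ to extract, below an arbitrary condition, an infinite pairwise disjoint family of finite sets of bounded size that are each forced to belong to $\dot{\mathcal F}$, then apply thinness over $M$ once. The only cosmetic difference is how disjointness of the auxiliary family is arranged: the paper picks each $H_j$ disjoint from $\bigcup_{i<j}H_i$ directly, while you force $\min F^*_m>N_{m-1}$; both devices do the same job.
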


\begin{proof}
 Let $\{ \dot F_\ell : \ell\in\omega\}$ be $P$-names and
 suppose that $p\in P$ forces that $\{\dot F_\ell : \ell\in\omega\}$
 are pairwise disjoint subsets of $  [\omega]^k$. Also let
 $I$ be any member of the ideal generated by $\mathcal A$.
 Working in $M$, 
recursively choose $q_j < p $ ($j\in\omega$) and $H_j,\ell_j$ 
 so that $q_j \Vdash \dot F_{\ell_j} = \check{H}_j$ 
 and $H_j\cap \bigcup\{ H_i : i<j\} =\emptyset$.  The
 sequence $\{ H_j : j\in \omega\}$ is a family in $M$
 of pairwise disjoint sets of cardinality $k$. Therefore
 there is a $j$ with $H_j\cap I=\emptyset$. This proves
 that $p$ does not force that $I$ meets every
 member of $\{\dot F_\ell :\ell\in\omega\}$. 
\end{proof}

\begin{lemma}[\cite{Shelahpseudo}*{3.8}]
Let $\kappa$ be a regular uncountable cardinal
and let
 $\{P_\beta :\beta\leq\kappa\}$
   be a $\cprec$-increasing chain of
    ccc posets  with $P_\kappa = \bigcup\{P_\alpha
     :\alpha < \kappa\}$. 
    Assume that, for each $\beta<\kappa$,\label{usingL}
$\dot{\mathcal A}_\beta$ is a $P_{\beta+1}$-name of
a subset of $ [\omega]^\omega$ that is forced to be thin over
the forcing extension by $P_\beta$.
 Also let 
   $\dot{\mathcal D}_0 $ be a $P_0*
   \mathcal C_{\{0\}\ttimes\mathfrak c}$-name that is forced
   to be 
     a Ramsey ultrafilter on $\omega$. 
     Then there is a sequence
      $\langle \dot{\mathcal D}_\beta : 0<\beta <\kappa\rangle$
      such that for all $\alpha < \beta <\kappa$:
\begin{enumerate}
\item $\dot {\mathcal D}_\beta$ is a $P_\beta*
\mathcal C_{(\beta{+}1)\ttimes\mathfrak c}$-name,
\item $\dot{\mathcal D}_\alpha$ is a subset of $\dot{\mathcal D}_\beta$,
 \item $P_\beta *\mathcal C_{(\beta{+}1)\ttimes
 \mathfrak c}$ forces that $\dot{\mathcal D}_\beta$
  is a Ramsey ultrafilter, 
  \item $P_\alpha *\mathcal C_{(\alpha{+}1)\ttimes\mathfrak c}
  *\mathbb L(\dot{\mathcal D}_\alpha)\cprec
   P_\beta*\mathcal C_{(\alpha{+}1)\ttimes\mathfrak c}
   * \mathbb L(\dot{\mathcal D}_\beta)$,    
  and
 \item  $P_\beta*\mathcal C_{(\beta{+}1)\ttimes
 \mathfrak c}*\mathbb L(\dot{\mathcal D}_\beta)$
 forces that $\dot{\mathcal A}_\beta$ is thin over the forcing extension
 by $P_\alpha*\mathcal C_{(\alpha{+}1) \ttimes\mathfrak c}*
 \mathbb L(\dot{\mathcal D}_\alpha)$.
\end{enumerate}
 \end{lemma}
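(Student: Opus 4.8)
The plan is to construct $\langle\dot{\mathcal D}_\beta : 0<\beta<\kappa\rangle$ by recursion on $\beta$, carrying all five clauses as a joint induction hypothesis over the pairs $\alpha<\beta$; the object $\dot{\mathcal D}_0$ is given, and both successor and limit stages are handled uniformly, the common task being to \emph{extend} a Ramsey ultrafilter along a chain to a larger forcing extension. At stage $\beta$ I work in a name for the model $N_\beta=V[G_{P_\beta}][c_\beta]$, where $c_\beta$ is the $\mathcal C_{(\beta+1)\ttimes\mathfrak c}$-generic. Two features of $N_\beta$ drive the construction: the filter $\mathcal E_\beta:=\langle\bigcup_{\gamma<\beta}\dot{\mathcal D}_\gamma\rangle$ (read $\dot{\mathcal D}_\gamma$ when $\beta=\gamma+1$) is a \emph{selective} filter, being an increasing union of selective ultrafilters; and, since the Cohen component of $N_\beta$ has length at least $\mathfrak c$, $N_\beta$ satisfies $\operatorname{cov}(\mathcal M)=\mathfrak c$.

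Using $\operatorname{cov}(\mathcal M)=\mathfrak c$, inside $N_\beta$ I extend the selective filter $\mathcal E_\beta$ to a Ramsey ultrafilter $\mathcal D_\beta$ on $\mathcal P(\omega)^{N_\beta}$ — this is the relativized form of the classical fact that $\operatorname{cov}(\mathcal M)=\mathfrak c$ produces Ramsey ultrafilters, carried out against the $\mathfrak c$ many demands by a transfinite recursion in which the level-$\beta$ Cohen reals of $N_\beta$ furnish the sets needed to diagonalize even though $\mathfrak p<\mathfrak c$ — and I let $\dot{\mathcal D}_\beta$ be the resulting $P_\beta*\mathcal C_{(\beta+1)\ttimes\mathfrak c}$-name; the cardinal bookkeeping (that the number of demands stays $\mathfrak c$) is exactly as for $\dot{\mathcal D}_0$. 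Clauses (1)--(3) are then immediate: (1) by the choice of name, (2) since $\mathcal D_\alpha\subseteq\mathcal E_\beta\subseteq\mathcal D_\beta$, and (3) by construction; moreover $\mathbb L(\dot{\mathcal D}_\beta)$ is $\sigma$-centered, so all composite posets stay ccc.

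Clause (4) is then purely formal: from $P_\alpha\cprec P_\beta$ and absoluteness of Cohen forcing one gets $P_\alpha*\mathcal C_{(\alpha+1)\ttimes\mathfrak c}\cprec P_\beta*\mathcal C_{(\beta+1)\ttimes\mathfrak c}$, clause (2) gives $\dot{\mathcal D}_\alpha\subseteq\dot{\mathcal D}_\beta$, and the proposition recalled from \cite{Shelah2011}*{1.9} then yields $P_\alpha*\mathcal C_{(\alpha+1)\ttimes\mathfrak c}*\mathbb L(\dot{\mathcal D}_\alpha)\cprec P_\beta*\mathcal C_{(\beta+1)\ttimes\mathfrak c}*\mathbb L(\dot{\mathcal D}_\beta)$, which is clause (4) up to the routine bookkeeping of Cohen coordinates; at limit $\beta$ this, like clause (2), also follows by transitivity of $\cprec$ from the earlier stages.

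The substance of the lemma is clause (5): that the Laver step preserves thinness of $\dot{\mathcal A}_\beta$ over the sub-extension by $Q_\alpha:=P_\alpha*\mathcal C_{(\alpha+1)\ttimes\mathfrak c}*\mathbb L(\dot{\mathcal D}_\alpha)$. By clause (4) we have $V[G_{Q_\alpha}]\subseteq V[G_{Q_\beta}]$, so every relevant family $\mathcal F$ of pairwise disjoint sets of bounded size already lies in the final extension. I would push the base model $V[G_{P_\beta}]$, over which $\dot{\mathcal A}_\beta$ is thin by hypothesis, up to $V[G_{Q_\beta}]$ in two moves: the Cohen move $V[G_{P_\beta}]\to V[G_{P_\beta}][c_\beta]$ preserves thinness by Proposition \ref{nullstep} (the Cohen poset lies in $V[G_{P_\beta}]$), so it remains to show that $\mathbb L(\dot{\mathcal D}_\beta)$, \emph{guided by the Ramsey ultrafilter} $\mathcal D_\beta$, forces $\dot{\mathcal A}_\beta$ to stay thin. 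For this I would run a fusion argument: given a Laver condition $T$, a name $\dot I$ for a member of the ideal generated by $\dot{\mathcal A}_\beta$, and a family $\mathcal F$ as above, use the Ramsey-ultrafilter analogue of pure decision / continuous reading of names to thin $T$ — without moving its stem — to a $T^\ast$ along whose branches $\dot I$ is the increasing union of canonically determined finite pieces; then, because $\dot{\mathcal A}_\beta$ is thin over $V[G_{P_\beta}]$ and $\mathcal F$ consists of pairwise disjoint sets of bounded size (so only finitely many of its members can meet any one finite set), prune $T^\ast$ once more to a condition forcing the finitely many ``fresh'' points of $\dot I$ to avoid, and the remaining part of $\dot I$ to be disjoint from, a single fixed member of $\mathcal F$. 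Reconciling a genuinely new ideal element (a set read off the Laver generic) with thinness computed back in $V[G_{P_\beta}]$ is the delicate point, and it is exactly here that Ramsey-ness of $\mathcal D_\beta$ rather than an arbitrary ultrafilter, and the bounded-size formulation of thinness, are used; this is the step I expect to be the main obstacle. The remaining cases of clause (5) at limit $\beta$ need no new idea, following by transitivity from the successor instances together with Proposition \ref{nullstep} and the fact, noted before the lemma, that Cohen forcing preserves and enlarges thin families.
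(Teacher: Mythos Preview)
The paper does not prove this lemma; it is quoted verbatim from \cite{Shelahpseudo}*{3.8} and used as a black box in the proof of Theorem~\ref{noth}. So there is no in-paper argument to compare against, and I can only assess your sketch on its own terms.

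Your recursion and the verification of clauses (1)--(4) are fine; in particular, clause (4) does reduce to \cite{Shelah2011}*{1.9} once (2) is in hand. Where your sketch goes astray is in the division of labour between the construction of $\dot{\mathcal D}_\beta$ and clause (5). You treat (5) as the hard part and set up a Ramsey-fusion argument for the Laver step, but in fact Proposition~\ref{nullstep} already handles that step: once the Cohen move gives you that $\dot{\mathcal A}_\beta$ is thin over $N_\beta=V[G_{P_\beta}][c_\beta]$, note that $\mathbb L(\dot{\mathcal D}_\beta)$ is a poset lying \emph{in} $N_\beta$ (it is defined from $\dot{\mathcal D}_\beta\in N_\beta$), so Proposition~\ref{nullstep} applies again with $M=N_\beta$ and $P=\mathbb L(\dot{\mathcal D}_\beta)$. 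This yields thinness over $V[G_{Q_\beta}]$, hence over the smaller $V[G_{Q_\alpha}]$. No fusion and no Ramsey-ness are needed for (5); the Ramsey conclusion in (3) is carried along for other reasons, not to make (5) go through.

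Conversely, the step you pass over lightly --- extending $\mathcal E_\beta=\bigcup_{\gamma<\beta}\dot{\mathcal D}_\gamma$ to a Ramsey ultrafilter in $N_\beta$ --- is where the actual work of \cite{Shelahpseudo}*{3.8} lies. Your claim that $\mathcal E_\beta$ is ``a selective filter, being an increasing union of selective ultrafilters'' is not correct as stated: each $\dot{\mathcal D}_\gamma$ is Ramsey only in its own model $N_\gamma$, and the union is merely a filter in $N_\beta$ with no selectivity guaranteed there. The role of the fresh block $\mathcal C_{\{\beta\}\times\mathfrak c}$ of Cohen reals is exactly to provide, generically, the diagonalising sets needed to extend $\mathcal E_\beta$ to a Ramsey ultrafilter while meeting the $\mathfrak c$ many demands; this interleaving of Cohen coordinates with partitions and functions is the substantive construction, and your sketch does not supply it.
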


\begin{lemma}[\cite{Shelahpseudo}*{2.7}]
Assume that $P_{0,0}\cprec P_{1,0}$ and 
that $\dot{\mathcal A}$ is a  $P_{1,0}$-name of a subset
of $[\omega]^\omega$.  Assume\label{Lchains}
 that $\langle
 P_{0,\xi} : \xi <\delta\rangle$ and $\langle P_{1,\xi} : \xi<\delta\rangle$
  are $\cprec$-chains such that $P_{0,\xi}\cprec P_{1,\xi}$ for all 
   $\xi<\delta$, and that $P_{1,\xi}$ forces that
    $\dot{\mathcal A}$ is thin over the forcing extension by
     $P_{0,\xi}$ for all $\xi< \delta$. Then $P_{1,\delta}=
     \bigcup\{P_{1,\xi} : \xi<\delta\}$ forces that 
      $\mathcal A$ is thin over the forcing extension by
       $P_{0,\delta}=\bigcup\{P_{0,\xi} : \xi <\delta\}$.
\end{lemma}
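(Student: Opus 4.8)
The plan is to split on $\cf(\delta)$, and in both cases to first replace ``thin'' by a more convenient form adapted to the observation that, since $\dot{\mathcal A}$ is a $P_{1,0}$-name, every set $J$ that is a finite union of members of $\mathcal A$ lives in the $P_{1,0}$-extension and so is named by a $P_{1,0}$-name. The elementary reformulation I would use is: for any model $M$, the family $\mathcal A$ is thin over $M$ if and only if for every finite union $J$ of members of $\mathcal A$ and every infinite pairwise-disjoint family $\mathcal F\in M$ of finite sets of bounded size, infinitely many members of $\mathcal F$ are disjoint from $J$. (Only the ``if'' needs argument: given $I$ in the ideal, take such a $J$ with $I\subseteq^* J$; since $\mathcal F$ is pairwise disjoint, only finitely many of its members meet the finite set $I\setminus J$, so cofinitely many of the infinitely many members missing $J$ also miss $I$; the ``infinitely many'' in the first place follows by feeding the cofinite subfamilies of $\mathcal F$, which again lie in $M$, to the definition of thin.) Granting that $P_{0,\delta}\cprec P_{1,\delta}$, as holds in the matrix-of-posets setting where this lemma is applied, so that $G\cap P_{0,\delta}$ is generic for any $P_{1,\delta}$-generic $G$, it thus suffices to show that $P_{1,\delta}$ forces: for every finite union $J$ of members of $\mathcal A$ and every such $\mathcal F$ in the $P_{0,\delta}$-extension, infinitely many members of $\mathcal F$ miss $J$.

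If $\cf(\delta)>\omega$ this is immediate. Any real in the $P_{0,\delta}$- (resp.\ $P_{1,\delta}$-) extension already lies in the $P_{0,\xi}$- (resp.\ $P_{1,\xi}$-) extension for some $\xi<\delta$, since by the ccc its nice name uses only countably many conditions. So $J$ and $\mathcal F$ appear by some common stage $\eta<\delta$, and I would apply the hypothesis that $P_{1,\eta}$ forces $\dot{\mathcal A}$ thin over the $P_{0,\eta}$-extension --- via the reformulation, in the $P_{1,\eta}$-extension, where $J$ is again a finite union of members of $\mathcal A$ --- to obtain infinitely many members of $\mathcal F$ disjoint from $J$; this conclusion is absolute up to the $P_{1,\delta}$-extension.

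The case $\cf(\delta)=\omega$ is where the real work lies. Fixing $\langle\xi_n:n\in\omega\rangle$ increasing and cofinal in $\delta$ (so $P_{1,\delta}=\bigcup_n P_{1,\xi_n}$ and $P_{0,\delta}=\bigcup_n P_{0,\xi_n}$), I would unwind the goal to the density statement: for every $p\in P_{1,\delta}$, every $P_{1,0}$-name $\dot J$ forced to be a finite union of members of $\dot{\mathcal A}$, every sequence $\langle\dot F_\ell:\ell\in\omega\rangle$ of canonical $P_{0,\delta}$-names forced by $p$ to enumerate an infinite pairwise-disjoint family of nonempty subsets of $[\omega]^{\le k}$, and every $j_0\in\omega$, there are $q\le p$ in $P_{1,\delta}$ and $\ell\ge j_0$ with $q\Vdash\dot F_\ell\cap\dot J=\emptyset$. (The empty set is disjoint from everything, so restricting to nonempty members is harmless.) This I would prove by imitating the proof of Proposition \ref{nullstep}, but threading through the chain $\langle P_{1,\xi_n}\rangle$ by means of $P_{1,0}$-reducts as in Proposition \ref{FSpreserve}, using two facts: (i) a condition of $P_{1,\delta}$ that forces a statement of the $P_{1,0}$-forcing language has its $P_{1,0}$-reduct forcing the same statement; and (ii) $P_{1,0}$ forces ``$\dot{\mathcal A}$ is thin over $V$'' (weaker than thinness over the $P_{0,0}$-extension), so that for any fixed infinite pairwise-disjoint family $\mathcal H\in V$ of size-$\le k$ sets, $P_{1,0}$ forces that infinitely many members of $\mathcal H$ miss $\dot J$. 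Working in $V$, I would recursively build a decreasing sequence $p=q_0\ge q_1\ge\cdots$ in $P_{1,\delta}$ (with $P_{1,0}$-reducts chosen decreasing), indices $j_0\le\ell_0<\ell_1<\cdots$, and sets $H_j\in[\omega]^{\le k}$, with $q_{j+1}\Vdash\dot F_{\ell_j}=\check H_j$ --- obtained by refining below the ($P_{1,\delta}$-predense) maximal antichain of $P_{0,\delta}$ that decides $\dot F_{\ell_j}$ --- while at each step testing whether the current reduct extends, inside $P_{1,0}$, to a condition forcing $\check H_j\cap\dot J=\emptyset$: if so, amalgamating that extension with $q_{j+1}$ produces the desired $q$, and if the test never succeeds I obtain an infinite pairwise-disjoint family $\{H_j:j\in\omega\}\in V$ of size-$\le k$ sets (pairwise disjointness is automatic, as $q_{j+1}$ refines every $q_{i+1}$ and forces the pairwise-disjoint $\dot F_{\ell_i}$ to equal $H_i$) all of whose members are forced by a corresponding $P_{1,0}$-reduct to meet $\dot J$; a diagonalization arranging those reducts to be sufficiently $P_{1,0}$-generic then contradicts fact (ii). The details of this last step are exactly those of \cite{Shelahpseudo}*{2.7}.

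The hard part is that final coordination in the case $\cf(\delta)=\omega$: the conditions deciding the values of the $P_{0,\delta}$-names $\dot F_\ell$ and the conditions (in $P_{1,0}$) witnessing thinness of $\dot{\mathcal A}$ relative to $\dot J$ sit on different rows of the matrix and need not be compatible, so the recursion must be organized --- via $P_{1,0}$-reducts and a bookkeeping of dense subsets of $P_{1,0}$ --- precisely so that the thinness of $\dot{\mathcal A}$ over $V$ applies to the family $\{H_j\}$ it produces. Everything else --- the reformulation of thin, the case of uncountable cofinality, and the pairwise-disjointness of the $H_j$ --- is routine.
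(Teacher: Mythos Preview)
The paper does not give its own proof of this lemma; it is simply quoted from \cite{Shelahpseudo}*{2.7}. So there is nothing to compare your argument against here, only the question of whether your outline is sound.

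Your reformulation of ``thin'' and the treatment of the case $\cf(\delta)>\omega$ are correct and standard. You are also right to flag that $P_{0,\delta}\cprec P_{1,\delta}$ is needed to make sense of the conclusion and is not literally among the hypotheses; in the applications in this paper it holds for structural reasons.

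For the case $\cf(\delta)=\omega$, your plan is the right shape, but the specific route you describe --- using only fact~(ii), thinness of $\dot{\mathcal A}$ over $V$, together with a descending sequence of $P_{1,0}$-reducts --- does not close by itself. The difficulty is exactly the one you identify as ``coordination'': from a decreasing sequence $r_{j+1}\Vdash_{P_{1,0}} H_j\cap\dot J\neq\emptyset$ you cannot extract a single $P_{1,0}$-condition forcing this for all $j$, and the density statement you get from thinness over $V$ (namely $r_0\Vdash\exists j\ H_j\cap\dot J=\emptyset$) only produces some $r'\le r_0$ and some $j^\ast$, with no guarantee that $r'$ is compatible with the particular $q_{j^\ast+1}$ that pinned down $H_{j^\ast}$ as a value of $\dot F_{\ell_{j^\ast}}$. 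Diagonalizing the $r_j$'s through the countably many dense sets deciding $n\in\dot J$ only manufactures a set $J^\ast\in V$, which is not in the ideal generated by $\mathcal A$ and so cannot be fed back into thinness. The actual argument in \cite{Shelahpseudo} uses the hypothesis at an intermediate stage $\xi<\delta$ (thinness over the $P_{0,\xi}$-extension, not merely over $V$) so that the family $\{H_j\}$ can be built inside $V[G_{0,\xi}]$ while the condition $p$ already lives in $P_{1,\xi}$; this is what makes the two pieces amalgamate. Since you explicitly defer the details to that reference, your outline is acceptable as a pointer, but the sentence suggesting that fact~(ii) alone suffices should be weakened.
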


Before proving this next result we recall the notion of
a $\square_\kappa$-sequence. For a set $C$ of ordinals,
let $\sup(C)$ be the supremum, $\bigcup C$, of $C$ and let
$\acc(C)$ denote the set of limit ordinals $\alpha<\sup(C)$ such that
$C\cap \alpha$ is cofinal in $\alpha$. For a limit ordinal
$\alpha$, a  set $C$  is a
 cub in   $\alpha$ if $C\subset\alpha=\sup(C)$ 
 and $\acc(C)\subset C$.

\begin{definition}[\cite{Jech2}]
For a cardinal 
 $\kappa$, the family $\{ C_\alpha : \alpha \in 
\acc(\kappa^+)\}$
 is a $\square_\kappa$-sequence if, for each $\alpha\in
\acc(\kappa^+)$:
 
\begin{enumerate}
 \item $C_\alpha$ is a cub in $\alpha$,
 \item if $\operatorname{cf}(\alpha)<\kappa$, then $|C_\alpha|<\kappa$,
 \item if $\beta\in \acc(C_\alpha)$, then $C_\beta = C_\alpha\cap \beta$.
\end{enumerate}
If there is a 
$\square_\kappa$-sequence, then $\square_\kappa$ is said to hold. 
\end{definition}

\begin{theorem}  It is consistent\label{noth} with $\aleph_1 < 
\mathfrak h<\mathfrak s <\operatorname{cf}(\mathfrak c )=\mathfrak c $
that there is a 
 $(\mathfrak h,\mathfrak s )$-shattering family. 
\end{theorem}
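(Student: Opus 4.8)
The plan is to build a $\kappa \times \kappa^+$-matrix of posets in the sense of Definition \ref{matrixposet}, where $\kappa = \mathfrak h$ will be a regular uncountable cardinal and $\kappa^+ = \mathfrak s$, while arranging that $\mathfrak c$ is large and regular (say $\mathfrak c = \theta$ with $\mathrm{cf}(\theta) = \theta > \kappa^+$). Starting in a model of GCH (plus $\square_\kappa$, which holds), I would first add $\kappa$ many Cohen reals $\{\dot x_\alpha : \alpha < \kappa\}$ via $\mathcal C_{\kappa \times \theta}$ at the bottom level $P_{\kappa,1}$; by the remark following Definition \ref{nullstep}, the family $\mathcal A_0 = \{\dot x_\alpha : \alpha < \kappa\}$ is thin over the ground model, and more generally Cohen forcing preserves thinness. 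This family will be the candidate strongly $(\kappa,\kappa^+)$-shattering family: condition (1) of Definition \ref{kappalambda} — that no infinite $b$ is $\subseteq^*$ more than $<\kappa$ of the $\dot x_\alpha$ — is exactly guaranteed by maintaining, throughout the $\kappa^+$-length iteration, that each $\dot x_\alpha$ remains thin over the extension by the posets indexed below $\alpha$; and thinness implies that no infinite set can be almost-contained in $\dot x_\alpha$ except in a bounded-in-$\kappa$ way, via the argument of Proposition \ref{nosplit}/Lemma \ref{usingL}.

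The second ingredient is to raise $\mathfrak s$ to $\kappa^+$ while keeping $\mathfrak h \le \kappa$ and $\mathfrak h \ge \kappa$ (hence $\mathfrak h = \kappa$). For the lower bound on $\mathfrak s$ and the upper bound $\mathfrak s \le \kappa^+$, I would use the Laver-style posets $\mathbb L(\dot{\mathcal D})$ over Ramsey ultrafilters as the iterands along the second coordinate, cofinally in $\kappa^+$, invoking Lemma \ref{usingL} and the chain-coherence Lemma \ref{Lchains} to carry the construction through limits; each $\mathbb L(\dot{\mathcal D})$ adds a real unsplit over the relevant inner model, so after $\kappa^+$ steps no family of size $<\kappa^+$ splits, giving $\mathfrak s \ge \kappa^+$, while standard bookkeeping over the $\theta\cdot\kappa^+$-length second coordinate and the existence of Cohen reals at cofinalities handles $\mathfrak s \le \kappa^+$. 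The point of the $\square_\kappa$-coherent re-enumerations described in the paragraph preceding Proposition \ref{Shelah2011 1.9} is precisely that, as we add new names $\dot a_\gamma$ (equivalently, as we keep track of the growing thin family along the first coordinate), we can at each limit $\mu < \kappa^+$ re-index in order-type $\kappa$ so that the preservation hypothesis "thin over $P_\alpha$ for $\alpha$ below the current index" is inductively maintained; Lemma \ref{usingL} is stated exactly in the form needed to push this one step of the first coordinate. For $\mathfrak h \le \kappa$ I apply Corollary \ref{2.2}(2): the matrix gives $P_{\kappa,\zeta}$ as an increasing chain of length $\kappa$ of complete suborders $\{P_{\alpha,\zeta}\}_{\alpha<\kappa}$, each successor step adding a Cohen (or dominating/Laver) real, so $\mathfrak h \le \kappa$; and $\mathfrak h \ge \kappa$ follows because the matrix structure (Definition \ref{matrixposet}(2)) ensures no fewer than $\kappa$ complete suborders suffice to capture all reals, combined with the usual argument that the iteration is "sufficiently distributive" below $\kappa$ — concretely, any $<\kappa$ dense ideals appear in some $P_{\alpha,\zeta}$ and the tail of the construction adds a real avoiding them.

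The main obstacle I expect is the simultaneous preservation: we must keep the thin family $\{\dot x_\alpha : \alpha < \kappa\}$ thin over the appropriate inner models (so that $(\kappa,\kappa^+)$-shattering condition (1) survives) \emph{while} the Laver posets $\mathbb L(\dot{\mathcal D}_\alpha)$ are actively destroying splitting families — and a Laver real is a dominating real and a fast-growing pseudo-intersection, so one must check that adding it does not accidentally produce an infinite set almost-contained in too many of the $\dot x_\alpha$'s. This is exactly what clause (5) of Lemma \ref{usingL} is engineered to deliver: $P_\beta * \mathcal C * \mathbb L(\dot{\mathcal D}_\beta)$ forces $\dot{\mathcal A}_\beta$ thin over the extension by $P_\alpha * \mathcal C * \mathbb L(\dot{\mathcal D}_\alpha)$. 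So the real work is in verifying that the hypotheses of Lemmas \ref{usingL} and \ref{Lchains} are met at every node of the $\kappa \times \kappa^+$-matrix — i.e. that the $\cprec$-commutativity of Proposition \ref{Shelah2011 1.9} (for the Laver posets over nested ultrafilters) meshes with the matrix axioms of Definition \ref{matrixposet}, especially clause (3) which only demands complete embedding along the second coordinate from some $\gamma < \kappa$ onward. Once the matrix is shown to exist, reading off $\mathfrak p = \mathfrak h = \kappa$ (via the Mathias iterands $\mathbb M(\mathcal F)$ interspersed as in Theorems \ref{main1}, \ref{main2}), $\mathfrak s = \kappa^+$, and $\mathfrak c = \theta$ is routine bookkeeping of the kind already carried out in Section 4, so I would only sketch it. The strong $(\kappa,\kappa^+)$-shattering conclusion then follows because $\kappa^+ = \mathfrak s$ in the extension, as noted right after Definition \ref{kappalambda}.
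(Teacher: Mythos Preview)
Your overall architecture---a $\kappa\times\kappa^+$-matrix of posets, $\square_\kappa$ for coherence, Laver posets $\mathbb L(\dot{\mathcal D})$ over nested Ramsey ultrafilters to push $\mathfrak s\ge\kappa^+$, Mathias iterands for $\mathfrak p\ge\kappa$, and Lemmas~\ref{usingL} and~\ref{Lchains} for preservation of thinness---matches the paper. But there is a genuine gap in your identification of the shattering family.

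A $(\kappa,\kappa^+)$-shattering family is, by Definition~\ref{kappalambda}, indexed by $\lambda=\kappa^+$, not by $\kappa$; and since the target model has $\mathfrak s=\kappa^+$, any family of size $<\kappa^+$ there fails to be splitting and hence cannot be shattering at all (a $(\kappa,\lambda)$-shattering family is always splitting, as in the proof that such a family bounds $\mathfrak s$). So your candidate $\{\dot x_\alpha:\alpha<\kappa\}$, added all at once at stage~$1$, is simply too small; with it, condition~(2) of Definition~\ref{kappalambda} is vacuous and the family is not splitting in the final model. In the paper the shattering family is $\{\dot a_\gamma:\gamma\in E\}$ with $E\subset\kappa^+$ of size $\kappa^+$: each $\dot a_\gamma$ is a Cohen real added at stage $\gamma{+}1$ along the \emph{second} (length-$\kappa^+$) coordinate, not the first. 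This cofinal placement is exactly what makes condition~(2) nontrivial and true---$\dot a_\gamma$ is Cohen over the extension by $P_{\kappa,\gamma}$, so any $b$ in the final model has $b\cap\dot a_\gamma$ infinite for all $\gamma$ beyond the stage where $b$ appears. Your reading of the motivational paragraph (``along the first coordinate'') is therefore backwards: the $\dot a_\gamma$ are $P_{0,\gamma{+}1}$-names with $\gamma$ ranging through $\kappa^+$; the $\square_\kappa$-sequence is used to re-enumerate each initial segment $\{\dot a_\alpha:\alpha<\mu\}$ in order-type $\kappa$ so that it can be matched against the first-coordinate height and fed into the hypothesis of Lemma~\ref{usingL}. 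The delicate work---and the reason for the elaborate $\operatorname{supp}(P_{\alpha,\xi})$ bookkeeping in the paper's proof---is tracking which of the $\kappa^+$ many $\dot a_\gamma$'s are visible at each node $(\alpha,\xi)$ and proving that the invisible ones remain thin over $P_{\alpha,\xi}$; your plan does not yet engage with this, and without it condition~(1) of Definition~\ref{kappalambda} cannot be verified for a family of the correct size.
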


\begin{proof}
 We start in a model of GCH satisfying $\square_\kappa$ 
 for some   regular cardinal
  $\kappa>\aleph_1$.  Choose any regular $\lambda>\kappa^+$.  
  Fix a $\square_\kappa$-sequence
   $\{ C_\alpha : \alpha \in \acc
(\kappa^+)\}$. We may
   assume that $C_\alpha = \alpha$ for all $\alpha\in
\acc(\kappa)$.
   For each $\alpha\in \acc(\kappa^+)$, let $o(C_\alpha)$ denote
   the order-type of $C_\alpha$.
    When 
$\acc(C_\alpha)$ is bounded in $\alpha$ with $\eta=\max(\acc(C_\alpha))$,
    then let $\{\varphi^\alpha_\ell:\ell\in\omega\}$ enumerate
  $C_\alpha\setminus\eta $ in increasing order.
  
   We will construct
   a $\kappa\ttimes\kappa^+$-matrix of posets, 
    $\langle P_{\alpha,\xi} : \alpha<\kappa,\xi<\kappa^+ \rangle
    \in H(\lambda^+)$
    and prove that the 
    poset $P_{\kappa,\kappa^+}$ as in 
    Lemma \ref{limit} has the desired properties.  
    For
    each $\eta<\xi<\kappa^+$, we will also
    choose an     
    $\iota({\eta,\xi})\in \kappa$ 
  satisfying, as in (3)
  of the definition of $\kappa\ttimes(\xi{+}1)$-matrix,
   that $P_{\alpha,\eta}\cprec P_{\alpha,\xi}$
    for all $\iota({\eta,\xi})\leq \alpha <\kappa$.
    We construct this family by recursion on $\xi<\kappa^+$,
    and, 
   for each $\xi<\kappa^+$, we let $P_{\kappa,\xi}$ 
   denote the poset $\bigcup\{ P_{\alpha,\xi} : \alpha < \kappa\}$
   as in Lemma \ref{limit}. 
   
   We will recursively define two other families.
     For each $\alpha<\kappa$ and
     $\xi<\kappa^+$, we will define
      a set $\operatorname{supp}(P_{\alpha,\xi})\subset\xi$ that can
     be viewed as the union of the supports of the elements
     of $P_{\alpha,\xi}$ and will satisfy that
      $ \{\operatorname{supp}(P_{\alpha , \xi}) : \alpha <\kappa\}$ is increasing
      and covers $\xi$.
     For each  limit $\eta<\kappa^+$ of cofinality less than $\kappa$
     and each $n\in\omega$,
      we will select a canonical $P_{\kappa,\eta{+}n{+}1}$-name,
       $\dot a_{\eta{+}n} $ 
       of 
       a subset $\omega$ that is forced to be Cohen over
       the forcing extension by $P_{\kappa,\eta}$.  While
       this condition looks awkward, we simply want to avoid
       this task at limits of cofinality $\kappa$.
       Needing notation for this, let $E=  
        \kappa^+\setminus \bigcup\{ [\eta,\eta+\omega) 
        : \operatorname{cf}(\eta)=\kappa\}$.
       
       For each $\alpha<\kappa$
       and $\xi<\eta<\kappa^+$, 
        we define   $\mathcal A_{\alpha,\xi,\eta}$
        to be the family $\{ \dot a_{\gamma} : \gamma
        \in E\cap \eta\setminus
        \operatorname{supp}(P_{\alpha,\xi})\}$. The 
        intention is that for all $\alpha<\kappa$
        and $\xi\leq \eta<\kappa^+$, 
        $\mathcal A_{\alpha,\xi,\eta}$ is a family of
       $P_{\kappa,\eta}$-names which is forced by
        the poset
         $P_{\kappa,\eta}$ to be
         thin over the forcing extension by $P_{\alpha,\xi}$. 
         Let us note that if $\alpha<\beta$ and $\xi\leq\eta<\kappa^+$,
         then $\mathcal A_{\alpha,\xi,\eta}\setminus
          \mathcal A_{\beta,\eta,\eta}$ should then be  a set of
           $P_{\beta,\eta}$-names.
By ensuring that $\operatorname{supp}(P_{\alpha,\xi})$ has cardinality
less than $\kappa$ for all $\alpha<\kappa$ and $\xi<\kappa^+$,
 this will ensure that the family
  $\{ \dot a_{\eta} : \eta \in E\}$ is $(\kappa,\kappa^+)$-shattering. 
  For each $\eta<\kappa^+$ with cofinality $\kappa$ we will
  ensure that $P_{\kappa,\eta+1}$ has the form 
   $P_{\kappa,\eta}*\mathcal C_{\kappa\ttimes\lambda}$
   and that $P_{\kappa,\eta+2} = P_{\kappa,\eta+1}*
   \mathbb L(\dot{\mathcal D}_{\kappa,\eta} )$ for
   a $P_{\kappa,\eta+1}$-name $\dot{\mathcal D}_{\kappa,\eta}$ of
   an ultrafilter on $\omega$. This will ensure that $\mathfrak c\geq\lambda$
   and
   $\mathfrak s=\kappa^+$. 
   The sequence defining 
   $P_{\kappa,\eta+3}$ will be devoted to ensuring
   that $\mathfrak p\geq \kappa$.
   
     We start the recursion in a rather trivial fashion.
    For each $\alpha< \kappa$, 
    $P_{\alpha,0} = \mathcal C_\omega$ and, for each $n\in\omega$,
 $P_{\alpha,n+1} = P_{\alpha,n}*\mathcal C_\omega$.  
      We may also let $\iota({n,m})= 0$ for all $n<m<\omega$. 
For each $n\in\omega$,  let $\dot a_n$ be the canonical
name of the Cohen real added by the second coordinate
of 
$P_{\kappa,n+1} = P_{\kappa,n}*\mathcal C_\omega$.
         For each $\alpha<\kappa$ and $n\in \omega$, define
      $\operatorname{supp}(P_{\alpha,n})$ to be $n$. 
It should be clear that $P_{\kappa,\omega}$ forces
that, for each $\alpha<\kappa$ and $n\in\omega$,
the family $\{\dot a_m : n\leq m\in\omega\}$ is thin
over the forcing extension by $P_{\alpha,n}$.     
Assume that $P$ is a poset whose elements are functions
with domain a subset of an ordinal $\xi$.  
 We adopt the notational convention
 that   for a $P$-name $\dot Q$ for  a poset, 
 $P*_\xi \dot Q$  will denote the representation
 of $P*\dot Q$ whose elements have
 the form $p\cup \{(\xi,\dot q)\}$ for $(p,\dot q)\in P*\dot Q$.

   We will prove,
   by induction on limit $\zeta<\kappa^+$,
   there is a $\kappa\ttimes \zeta$-matrix 
    $\{ P_{\alpha,\xi} : \alpha<\kappa, \xi < \zeta\}$ satisfying 
    conditions (1)-(10):
\begin{enumerate}
\item for all $\alpha < \beta <  \kappa$ and $\xi<\eta <\zeta $,
if $P_{\alpha,\xi}\cprec P_{\beta,\eta}$, then
the poset $P_{ \beta,\eta}$ forces that the 
family $\mathcal A_{\alpha,\xi ,\eta}\setminus
\mathcal A_{\beta,\eta,\eta}$ is thin over
the forcing extension by $P_{\alpha,\xi}$,
\item for all $\alpha<\kappa$ and $\xi < \zeta$, the elements
$p$ of the poset $P_{\alpha,\xi}$ are functions that have
 a finite domain, $\dom(p)$, contained in $\xi$, 
 \item if  $\acc(C_\zeta)$
 is cub in $\zeta$ and $\eta\in \acc(C_\zeta)$,
  then 
\begin{enumerate}
\item $P_{n,\zeta}$ is the trivial poset and
  $\operatorname{supp}(P_{n,\zeta})=\emptyset$   
for $n\in\omega$,
\item $P_{\alpha,\zeta} = P_{\alpha,\eta}$ and $\operatorname{supp}(P_{\alpha,\zeta})
= \operatorname{supp}(P_{\alpha,\eta})$ 
for all $o(C_\eta)\leq
  \alpha < o(C_\eta)+\omega$, and 
  \item $P_{\alpha,\zeta} =
   \bigcup\{ P_{\alpha,\eta} : \eta\in 
\acc(C_\zeta)\}$ and $\operatorname{supp}(
   P_{\alpha,\zeta}) = \bigcup\{\operatorname{supp}(P_{\alpha,\eta}) :
   \eta\in \acc(C_\zeta)\}$, 
   for all
    $o(C_\zeta)\leq \alpha < \kappa$,
\end{enumerate}
also, let $\iota(\eta,\zeta)=o(C_\eta)$ for all $\eta\in 
\acc(C_\zeta)$
and, 
for all $\gamma<\zeta\setminus \acc(C_\zeta)$,  
let $\iota(\gamma,\zeta)
=  \iota
(\gamma, \eta)$ where $\eta=\min(\acc(C_\zeta)\setminus \gamma)$,
 
  \item if $\max(\acc(C_\zeta)){<}\zeta$ then
  let\\
   \centerline{$\iota_\zeta=\max(
 o(C_\zeta),  \sup\{\iota(\varphi^\zeta_{\ell},\varphi^\zeta_{\ell'}+n)
  : \ell
  \leq\ell' < n<\omega\})$ {and}}  
  
\begin{enumerate}
\item  
  set $P_{\alpha,\zeta} = P_{\alpha,\varphi^\zeta_0 }$
  and $\operatorname{supp}(P_{\alpha,\zeta})=
  \operatorname{supp}(P_{\alpha,\varphi^\zeta_0})$
   for 
  all $\alpha <\iota_\zeta$,
  \item set, for 
  $\iota_\zeta\leq\alpha<\kappa$, 
    $P_{\alpha,\zeta} = \bigcup\{
  P_{\alpha,\varphi^\zeta_\ell+n} :  \ell,n\in\omega\}$
  and 
 $\operatorname{supp}( P_{\alpha,\zeta}) = \bigcup\{
  \operatorname{supp}(P_{\alpha,\varphi^\zeta_\ell+n}) :  \ell,n\in\omega\}$
   
  \item for each $\gamma\in \varphi^\zeta_0$
    let $\iota(\gamma,\zeta) = 
    \iota(\gamma,\varphi^\zeta_0)$, 
     let $\iota(\varphi^\zeta_0,\zeta) = o(C_\gamma)$, and
  for each $\varphi^\zeta_0<\gamma<\zeta$,     
  $\iota(\gamma,\zeta) $ is the maximum of $   \iota_\zeta$
  and $\min\{ 
  \iota(\gamma,\varphi^\zeta_\ell{+}n)  : \ell,n\in\omega
  \ \mbox{and}\  \gamma<
  \varphi^\zeta_\ell{+}n \}$
 
\end{enumerate}
  
  \item if $o(C_\zeta)<\kappa$, then for all $\alpha<\kappa$
  and $n\in\omega$\label{aalpha}
  \begin{enumerate}
  \item $P_{\alpha,\zeta{+}n{+}1} = 
  P_{\alpha,\zeta{+}n}*_{\zeta+n}\mathcal C_\omega$,
  \item  $\dot a_{\zeta{+}n}$ in the canonical $P_{0,\zeta{+}n}
  *_{\zeta+n}\mathcal C_\omega$-name for
   the Cohen real added by the second coordinate copy of
   $\mathcal C_\omega$,
  \item $\operatorname{supp}(P_{\alpha,\zeta{+}n{+}1}) =
  \operatorname{supp}
  (P_{\alpha,\zeta})\cup [\zeta,\zeta{+}n]$,
  and 
  \item $\iota(\zeta{+}k,\zeta{+}n{+}1) = 0$ for all $k\leq n$,
  and,
  for all $\gamma<\zeta$, $\iota(\gamma,
  \zeta{+}{n}{+}1) = \iota(\gamma,\zeta)$,
  \end{enumerate}
  \item if $o(C_\zeta)=\kappa$, then for all 
   $\alpha<\kappa$, $P_{\alpha,\zeta+1} = P_{\alpha,\zeta} 
    *_{\zeta} \mathcal C_{\alpha+1\ttimes\lambda}$, 
    \item if $o(C_\zeta)=\kappa$, then for all $n\in\omega$
    and all $\alpha<\kappa$, $P_{\alpha,\zeta{+}3{+}n} = P_{\alpha,\zeta{+}3}$,
    \item if $o(C_\zeta)=\kappa$, then 
     there is an $\iota_\zeta<\kappa$ such
     that 
     $P_{\beta,\zeta+2} = P_{\beta,\zeta+1}$ for all $\beta <\iota_\zeta$,
    and there is\label{zeta2}
 a sequence $\langle \dot{\mathcal D}_{\alpha, \zeta} : \iota_\zeta\leq
 \alpha < \kappa\rangle$ such that, for each $\iota_\zeta\leq
 \alpha<\kappa$:
     
\begin{enumerate}  
 \item   $\dot{\mathcal D}_{\alpha,\zeta}$
 is a $P_{\alpha,\kappa+1}$-name of a Ramsey ultrafilter on $\omega$,
 \item for each $ \iota_\zeta\leq \beta < \alpha $,
  $\dot{\mathcal D}_{\beta,\zeta}\subset \dot{\mathcal D}_{\alpha,\zeta}$,

  \item  
     $P_{\alpha,\zeta+2} = P_{\alpha,\zeta+1}*_{\zeta{+}1}\mathbb L(
     \dot{\mathcal D}_{\alpha,\kappa})$,
\end{enumerate}
 \item if $o(C_\zeta)=\kappa$, then for   $\iota_\zeta$ chosen as in 
 (\ref{zeta2})
 
\begin{enumerate}
 \item for each $\alpha < \iota_\zeta$, 
 $P_{\alpha,\kappa+3} = P_{\alpha,\kappa+2}$, 
 \item $P_{\iota_\zeta,\zeta{+}3}  = 
 P_{\iota_\zeta, \zeta{+}2}*_{\zeta{+}2}\dot Q_{\iota_\zeta,\zeta{+}2}$
for some  $P_{\iota_\zeta,\zeta}$-name, $\dot Q_{\iota_\zeta,\zeta{+}2}$
in $ H(\lambda^+)$
  of a finite support
 product of $\sigma$-centered posets,
  \item for each $\iota_\zeta<\alpha<\kappa$, 
      $P_{\alpha,\zeta+3} = P_{\alpha,\zeta+2}*_{\zeta{+}2}
      \dot Q_{\iota_\zeta,\zeta{+}2}$,
 \end{enumerate}
\item if $o(C_\zeta)=\kappa$, then for all $\alpha<\kappa$,
$n\in\omega$, and $\gamma<\zeta$,\\
   $\operatorname{supp}(P_{\alpha,\zeta{+}n{+}1}) = \operatorname{supp}(
   P_{\alpha,\zeta})\cup [\zeta,\zeta+n]$, 
    $\iota(\gamma,\zeta{+}n)=\iota(\gamma,\zeta)$, and
     $\iota(\zeta{+}k,\zeta{+}n) = \iota_\zeta$ for all $k<n\in\omega$,
\end{enumerate}
 
    It should be clear from the properties, and by induction
    on $\zeta$,
    that  for 
 all $\alpha<\kappa$ and  $\xi<\zeta$,
 each 
   $p\in P_{\alpha,\xi}$  is a function 
  with finite domain contained in $\operatorname{supp}(P_{\alpha,\xi})$. 
Similarly, it is immediate from the  hypotheses
that $\operatorname{supp}(P_{\alpha,\xi})$ has cardinality less
than $\kappa$ for all $(\alpha,\xi)\in \kappa\ttimes\kappa^+$.

 Before verifying the construction, we first prove,
by induction on $\zeta$,
  that, the conditions (2)-(10) ensure that 
  for all $ \xi\leq \zeta$ and $\eta\in 
\acc(C_\xi)$,

\medskip

 \noindent Claim (a):
  $P_{\alpha,\eta}\cprec P_{\alpha,\xi}$ for all $o(C_\eta)+\omega
  \leq 
 \alpha\in \kappa$,
 
 \noindent Claim (b): $P_{\alpha,\eta} = P_{\alpha,\xi}$ for all 
  $\alpha < o(C_\eta)+
  \omega$
 \medskip
 
\noindent If $o(C_\xi)\leq \alpha$, then  
 $P_{\alpha,\eta}\cprec P_{\alpha,\xi}$
 follows immediately from
 clause 2(c) and, by induction, clauses 3(a). 
   Now
 assume $\alpha <o(C_\xi)+\omega$.
 If $\acc(C_\xi)$ is not cofinal in $\xi$, then, by induction,
  $P_{\alpha,\eta} = P_{\alpha,\varphi^\xi_0}$
  and by clause 3(a) $P_{\alpha,\varphi^\xi_0} = P_{\alpha,\xi}$. 
  If $\acc(C_\xi)$ is cofinal in $\xi$, then choose $\bar\eta\in 
\acc(C_\xi)$
  so that $o(C_{\bar\eta})\leq \alpha < o(C_{\bar\eta})+\omega$.
  By clause 2(b), $P_{\alpha,\xi} = P_{\alpha,\bar\eta}$. By
  the inductive assumption, $P_{\alpha,\eta} = P_{\alpha,\bar\eta}$
  since one of $\eta=\bar\eta, \eta\in 
\acc(C_{\bar\eta}) $ or $\bar\eta\in
  \acc(C_{\eta})$ must hold.

        The second thing we check is that the conditions 
        (2)-(10) also ensure
     that, for each $\zeta <\kappa^+$,
      $\langle P_{\alpha,\eta} : \alpha <\kappa, \eta<\zeta\rangle$
     is a $\kappa\ttimes\zeta$-matrix. 
  We assume, by induction on limit
  $\zeta$, that for $\gamma<\eta<\zeta$,
  $\{ P_{\alpha,\gamma} : \alpha<\kappa\}$ is a $\cprec$-chain
  and that 
   $P_{\alpha,\gamma}\cprec P_{\alpha,\eta}$ for all $\eta$
 with $\iota(\gamma,\eta)\leq \alpha < \kappa$.       
 We check the details for $\zeta+1$ and skip the easy subsequent
 verification for $\zeta+n$ ($n\in\omega)$.    
 Suppose first that 
$\acc(C_\zeta)$ is cofinal in $\zeta$ and let
 $\iota(\gamma,\zeta) \leq \alpha < \kappa$ for some $\gamma<\zeta$.
 Of course we may assume that $\gamma\notin 
\acc(C_\zeta)$. Since
 $\acc(C_\zeta)$
 is cofinal in $\zeta$, let $\eta = \min(\acc(C_\zeta)\setminus \gamma)$. 
 By induction, $P_{\alpha,\gamma}\cprec P_{\alpha,\eta}\cprec P_{\alpha,\zeta}$. 
Now assume that 
$\acc(C_\zeta)$ is not cofinal in $\zeta$. 
If $\gamma\leq \varphi^\zeta_0$, then $\iota(\gamma,\zeta)=
\iota(\gamma,\varphi^\zeta_0)$, and so we have
that $P_{\alpha,\gamma}\cprec P_{\alpha,\varphi^\zeta_0}\cprec
 P_{\alpha,\zeta}$. If $\varphi^\zeta_0<\gamma$, then choose
 any $\ell\in\omega$ so that $\gamma<\varphi^\zeta_\ell$. 
By construction, $\iota(\gamma,\zeta) \geq
 \iota(\gamma,\varphi^\zeta_\ell)$ and so, for
  $\iota(\gamma,\zeta)\leq \alpha<\kappa$,
   $P_{\alpha,\gamma}\cprec P_{\alpha,\varphi^\zeta_\ell} \cprec
    P_{\alpha,\zeta}$.

     Now we consider the values of $\mathcal A_{\alpha,\xi,\eta}$
     for $\alpha<\kappa$ and $\omega\leq\xi\leq\eta$
     by examining the names $\dot a_{\gamma}$ for 
  $\gamma\in E $.

     By clause (\ref{aalpha}), $\dot a_{\gamma}$ 
     is a $P_{0,\gamma{+}1}$-name and 
      $\gamma$ is in the domain of each $p
      \in P_{0,\gamma{+}1}$ appearing
      in the name. 
One direction of this next claim is then obvious  given
     that the domain of every element of $P_{\alpha,\xi}$
     is a subset of $\operatorname{supp}(P_{\alpha,\xi})$.
      
     \smallskip

 \noindent  Claim (c):    $\dot a_\gamma$ is a $P_{\alpha,\xi}$-name,
  if and only if
  $\gamma\in\operatorname{supp}(P_{\alpha,\xi})$.
 \smallskip
 
 Assume that $\gamma\in \operatorname{supp}(P_{\alpha,\xi})$. 
 We prove this by induction on $\xi$.
 If $\xi$ is a limit, then $\operatorname{supp}(P_{\alpha,\xi})$ 
 is defined as a union, hence there is an $\eta <\xi$ such
 that $\gamma\in \operatorname{supp}(P_{\alpha,\eta})$
 and $P_{\alpha,\eta}\cprec P_{\alpha,\xi}$.  If $\xi = \eta+n$
 for some limit $\eta$ and $n\in\omega$, then  
  $P_{\alpha,\eta}\cprec P_{\alpha,\xi}$ and so we may
  assume that $\eta \leq \gamma =\eta+k < \eta+n$ 
  and that $o(C_\eta)<\kappa$. Since $
  P_{0,\eta{+}k}\cprec P_{\alpha,\eta{+}k}
  \cprec P_{\alpha,\eta{+}n}=P_{\alpha,\xi}$, it follows that
  $\dot a_{\gamma}$ is a $P_{\alpha,\xi}$-name.
   \medskip
     
     We prove by induction on $\xi$ ($\xi$ a limit) that 
     for all $\gamma<\xi$:
     \medskip

\noindent  Claim (d):      for all  $\alpha < \iota(\gamma{+}1,\xi)$,
      $\gamma$ is 
       not  in  $\operatorname{supp}(P_{\alpha,\xi})$.
 \medskip

\noindent     First consider
the case      that  $\acc(
C_\xi)$ is cofinal in $\xi$ and let 
$\eta$ be the minimum element of $\acc(C_\xi)\setminus(\gamma{+}1)$.
 By definition
 $\iota(\gamma{+}1,\xi)$ is equal to $\iota(\gamma{+}1,\eta)$
 and the claim follows 
 since we have that 
 $\operatorname{supp}(P_{\iota(\gamma{+}1,\xi),\zeta}) = 
 \operatorname{supp}(P_{\iota(\gamma{+}1,\xi),\eta})$.
 Now assume that 
$\acc(C_\xi)$ is not cofinal in $\xi$
 and assume that $\alpha <\iota(\gamma{+}1,\xi)$.
We break into cases: $\gamma <\varphi^\xi_0$ and 
 $\varphi^\xi_0\leq \gamma<\xi$.        In
 the first case $\iota(\gamma,\xi) = \iota(\gamma,\varphi^\xi_0)$
 and the claim follows by induction and  the fact that 
  $\operatorname{supp}(P_{\alpha,\varphi^\xi_0})= 
  \operatorname{supp}(P_{\alpha,\xi})$ for
  all $\alpha <\iota(\gamma,\xi)$. 
  Now consider $\varphi^\xi_0\leq \gamma <\xi$.
  If $\alpha<\iota_\xi$,
  then $P_{\alpha,\xi}=P_{\alpha,\varphi^\xi_0}$ and,
  since $\iota_\xi\leq \iota(\gamma{+}1,\xi)$,   
  $ \gamma$ is not in  $\operatorname{supp}(P_{\alpha,\varphi^\xi_0})$. 
 Otherwise, 
  choose $\ell,n\in\omega$  so that $\iota_\xi 
  \leq \alpha < \iota(\gamma{+}1,\xi) = \iota(\gamma{+}1,
  \varphi^\xi_{\ell}+n)$ 
  as in the definition
  of $\iota(\gamma,\xi)$.  By the minimality in the choice
  of $\varphi^\xi_\ell+n$, it follows that
   $\gamma$ is not in
    $\operatorname{supp}(P_{\alpha,
   \varphi^\xi_{\ell'}+n})$ for all $\ell',n\in\omega$. 
Since $\operatorname{supp}(P_{\alpha,\xi})$ is 
the union of all such sets, it follows that
 $\gamma$ is not in $
  \operatorname{supp}(P_{\alpha,\xi})$. 
   
    \medskip
    
    Next we prove, by induction on $\zeta$, that the matrix
    so chosen will additionally satisfy condition (1).    We first
    find a reformulation of condition (1).
    Note that by Claim (c), 
    $\mathcal A_{\alpha,\xi,\eta} = \{
    \dot a_\gamma : \gamma\in 
    E\cap \eta\setminus
         \operatorname{supp}(P_{ \alpha,\xi}) \}$.
         \medskip

\noindent Claim (e):  For 
each  $\alpha <\kappa$ and $\xi<\eta<\zeta$ and
finite subset $\{\gamma_i : i<m\}$
  of $ E\cap \eta\setminus
         \operatorname{supp}(P_{ \alpha,\xi})$   
         there is a $\beta<\kappa$ such that $\iota(\xi,\eta)\leq\beta$,
         $\{\gamma_i : i<m\}\subset
         \operatorname{supp}(P_{\beta,\eta})$
         and $P_{\beta,\eta}$ forces that 
         $\{ \dot a_{\gamma_i} : i<m\}$ is thin over
          the forcing extension by $P_{\alpha,\xi}$.
          
          \medskip
          
         Let us verify that Claim (e) follows from 
  condition (1).
           Let
  $\alpha,\xi,\eta$ 
          and $\{ \gamma_i : i<m\}$
          be as in the statement of  Claim (e).
Choose $\beta<\kappa$ 
      so that $\iota(\xi,\eta)$ and each $\iota(\gamma_i{+}1,\eta)$ 
      is less than     $\beta$.  Then $P_{\alpha,\xi}\cprec 
      P_{\beta,\eta}$ and $\{\dot a_{\gamma_i} : i<m\}
      \subset \mathcal A_{\alpha,\xi,\eta}\setminus
      \mathcal A_{\beta,\eta,\eta}$. This value of $\beta$
      satisfies the conclusion of   the Claim. 
     \medskip
      
      Now assume that Claim (e) holds and we prove
      that condition (1) holds.
 Assume 
      that $P_{\alpha,\xi}\cprec P_{\delta,\eta}$. To prove
      that  $\mathcal A_{\alpha,\xi,\eta}
      \setminus \mathcal A_{\delta,\eta,\eta}$ is forced by
      $P_{\delta,\eta}$ to be
      thin over
      the forcing extension by $P_{\alpha,\xi}$, it suffices
      to prove this for any finite subset of
      $\mathcal A_{\alpha,\xi,\eta}
      \setminus \mathcal A_{\delta,\eta,\eta}$. 
      Thus, let $\{\gamma_i : i<m\}$ be any finite
      subset of $\operatorname{supp}(P_{\delta,\eta})\cap 
      E\cap \eta\setminus
         \operatorname{supp}(P_{ \alpha,\xi})$. Choose
         $\beta $
as in the conclusion of    the Claim.  If $\beta \leq \delta$,
 then $P_{\delta,\eta}$  forces that 
        $\{\dot a_{\gamma_i} : i <m\}$ is thin over the forcing extension
        because $P_{\beta,\eta}\cprec P_{\delta,\eta}$ does.
        Similarly, if $\delta<\beta$, then $P_{\delta,\eta}$
        being completely embedded in $P_{\beta,\eta}$
        can not force that 
      $\{\dot a_{\gamma_i} : i<m\}$  is
   not thin over the forcing extension by $P_{\alpha,\xi}$.
 
\bigskip

    We assume
    that $\omega \leq \zeta<\kappa^+$ is a limit and that 
     $\langle P_{\alpha,\xi} : \alpha<\kappa, \xi<\zeta\rangle$ have been 
     chosen so that conditions (1)-(10) are satisfied. We prove,
     by induction on $n\in\omega$,
     that there is an extension 
     $\langle P_{\alpha,\xi} : \alpha<\kappa, \xi<\zeta+n\rangle$ 
     that also satisfies conditions (1)-(10). 
\bigskip

     For $n=1$, 
 we define the sequence
 $\langle   P_{\alpha,\zeta} : \alpha<\kappa \rangle$ according
 to the requirement of (3) or (4) as appropriate. 
   It
 follows from Lemma \ref{Lchains} that   (2) 
 will hold for the extension
  $\langle P_{\alpha,\xi} : \alpha<\kappa, \xi<\zeta+1\rangle$.
Conditions (3)-(10) hold since there are no new 
  requirements.  We must verify that the condition
  in  Claim (e) holds for $\eta=\zeta$.
  Let
  $\alpha,\xi $ 
          and $\{ \gamma_i : i<m\}$
          be as in the statement of  Claim (e) with $\eta=\zeta$.
          Let $C_\zeta = \{ \eta_\beta : \beta < o(C_\zeta)\}$ 
          be an order-preserving enumeration. 
          We first deal with case that 
$\acc(C_\zeta)$ is cofinal in $\zeta$.
          Choose any $\beta_0<\kappa$  
 large enough so that $\gamma_i \in \operatorname{supp}(P_{
 \beta_0,\zeta})$ for all $i<m$. Choose $\beta_0<\beta$
 so that $\iota(\xi, \eta_{\beta_0})\leq \beta$. Now we
 have that $P_{\alpha,\xi}\cprec P_{\beta,\eta_{\beta_0}}$ 
 and $P_{\beta,\eta_{\beta_0}}\cprec P_{\beta,\zeta}$. Applying
 Claim (e) to $\eta_{\beta_0}$, we have that $P_{\beta,\eta_{\beta_0}}$
 forces that $\{ \dot a_{\gamma_i} : i<m\}$ is thin over
 the forcing extension by $P_{\alpha,\xi}$. As in the proof of Claim
 (e), this implies that $P_{\beta,\zeta}$ forces the same thing. 
 
 Now assume that $\acc(C_\zeta)$ is not cofinal in $\zeta$. 
 If $\alpha  <\iota_\zeta$, then  apply Claim (e) to
 choose $\beta$ so that 
  $
 P_{\beta,\iota_\zeta}$ forces that $\{\dot a_{\gamma_i} :
 i<m\}$ is not thin over the extension by $P_{\alpha,\xi}$.
 Since $P_{\beta,\iota_\zeta}\cprec P_{\beta,\zeta}$ holds
 for all $\beta$, $P_{\beta,\zeta}$ also
 forces that $\{\dot a_{\gamma_i}  : i<m\}$ is not thin
 over the extension by $P_{\alpha,\xi}$.
  If $\iota_\zeta \leq \alpha$,  first choose
  $\delta<\kappa$ large enough so that
  $\iota(\xi,\zeta)$ and each $\iota(\gamma_i{+}1,\zeta)$
  is less than $\delta$. Since $\{ \gamma_i : i<m\}$
  is a subset of $\operatorname{supp}(P_{\delta,\zeta})$,
   we can  choose $\ell<\omega$ large enough so that
  $\{\gamma_i : i<\omega\} \subset \operatorname
  {supp}(P_{\delta,\varphi^\zeta_\ell})$. Applying
  Claim (e) to $\eta=\varphi^\zeta_\ell$, we choose
  $\beta$ as in the Claim.  As we have seen, 
  there is no loss to assuming that $\delta\leq \beta$
  and, since $P_{\beta,\varphi^\zeta_\ell}\cprec
   P_{\beta,\zeta}$, this completes the proof.
  
          \medskip
    
    If $o(C_\zeta)<\kappa$, then the construction
    of $\langle P_{\alpha,\zeta+n} : n\in\omega, \alpha<\kappa\rangle$
    is canonical so that conditions (2)-(10) hold. We again verify
    that Claim (e) holds for all values of $\eta$ with $\zeta<\eta<\zeta{+}\omega$. 
    Let $\alpha,\xi$ and $\{ \gamma_i : i<m\}$ be as in Claim (e)
    for $\eta=\zeta{+}n$. 
    We may assume
    that assume that $\{\gamma_i : i<m\}\cap \zeta 
     = \{\gamma_i : i<\bar m\}$ for some $\bar m\leq m$.
 If $\xi<\zeta$, let $\bar\xi=\xi$, otherwise, choose any
  $\bar\xi<\zeta$ so that $P_{\alpha,\zeta} = P_{\alpha,\bar\xi}$. 
  Note that $\{\gamma_i : \bar m\leq i<m\}$ is disjoint from
  the interval $[\zeta,\xi)$.
   Choose
     $\beta<\kappa$ to be greater than $\iota(\bar\xi,\zeta)$ and 
     each $\iota(\gamma_i{+}1,\zeta)$ ($i<\bar m$), and so
     that $P_{\beta,\zeta}$ forces that $\{ \dot a_{\gamma_i} : i<\bar m\}$
     is thin over the extension by $P_{\alpha,\bar\xi}$. 
     If $\bar m= m$ we are done by the fact that
      $P_{\alpha,\xi} $ is isomorphic to
       $P_{\alpha,\bar\xi}*\mathcal C_\omega$.
       In fact, we similarly have that $P_{\beta,\xi}$ forces
       that $\{ \dot a_{\gamma_i}  : i<\bar m\}$ is thin over
       the forcing extension by $P_{\alpha,\xi}$. 
Since   $P_{\beta,\zeta{+}n}$ forces that $\bigcup
      \{ \dot a_{\gamma_i} : \bar m \leq i < m\}$ is a Cohen real
      over the forcing extension by $P_{\beta,\xi}$ 
      it also follows that $P_{\beta,\zeta{+}n}$  
      forces that $\{ \dot a_{\gamma_i} : i<m\}$
      is thin over the extension by $P_{\alpha,\xi}$.  
\bigskip

Now we come to the final case where $o(C_\zeta)=\kappa$
and the main step to the proof. The fact that Claim (e)
will hold for $\eta=\zeta+1$ is proven as above for
the case when $o(C_\zeta)<\kappa$ and 
$\acc(C_\zeta)$ is
cofinal in $\zeta$. For values of $n>3$, there is nothing
to prove since $P_{\alpha,\zeta{+}3{+}k} = P_{\alpha,\zeta{+}3}$
for all $k\in\omega$. We also note that $\zeta{+}n\notin
E$  for all $n\in\omega$.

At step $\eta=\zeta+2$ we         must
take great care to preserve Claim (e) and at step $\zeta+3$
we make a strategic choice towards ensuring that 
 $\mathfrak p$ will equal $\kappa$. Indeed, 
         we begin by choosing
         the lexicographic
         minimal pair,         $(\xi_\zeta,\alpha_\zeta) $,
 in         $ \zeta\ttimes \kappa$ with the property
 that there is a family of fewer than $\kappa$
         many canonical
         $P_{\alpha_\zeta,\xi_\zeta}$-names of subsets of $\omega$
         and a $p\in P_{\alpha_\zeta,\xi_\zeta}$ that 
         forces over $P_{\kappa,\zeta}$ 
         that there is no  
         pseudo-intersection. 
         If there is no such pair, then let $(\alpha_\zeta,
         \xi_\zeta) = (\omega,\zeta{+}1)$. 
         Choose $\iota_\zeta$ so that $P_{\alpha_\zeta,\xi_\zeta}
         \cprec P_{\iota_\zeta,\zeta{+}1}$.
           
         \medskip
    
    Assume that $\alpha,\xi, \{\gamma_i : i < m\}$ are as in Claim (e).    
    We first
    check that if $\xi <\zeta+2$, then there is nothing new to 
    prove. Indeed, simply choose
    $\beta<\kappa$ large enough
    so that $P_{\beta,\zeta+1}$ has the properties
    required in Claim (e) for $P_{\alpha,\xi}$. 
    Of course it follows that
     $P_{\beta,\zeta+2}$ forces that $\{
     \dot a_{\gamma_i} : i<m\}$ is
     thin over the extension by $P_{\alpha,\xi}$
 since $P_{\beta,\zeta+1}$ already forces this.
 
 This means that we need only consider instances
 of Claim (e) in which $\xi=\zeta+2$.  The analogous statement
 also holds when we move to $\zeta+3$.     
    For each $\beta<\kappa$,
   let $$T_\beta= 
   E\cap  \operatorname{supp}(P_{\beta+1,  \zeta}) 
    \setminus  \operatorname{supp}(P_{\beta,\zeta})~$$
    and note that $P_{\beta{+}1,\zeta{+}1}$ forces
    that $\{ \dot a_\gamma : \gamma\in T_\beta\}$ is
    thin over the extension by $P_{\beta,\zeta{+}1}$. 
    Most of the work has been done for us in Lemma \ref{usingL}. 
    Except for some minor re-indexing, we can assume
    that the sequence $\{P_\beta : \beta<\kappa\}$ in
    the statement of Lemma \ref{usingL} is the
    sequence $\{ P_{\beta,\zeta} :   \beta < \kappa\}$.
    We also have that $P_{\beta,\zeta}*\mathcal C_{(\beta{+}1)\ttimes 
    \mathfrak c}$ is isomorphic to $P_{\beta,\zeta{+}1}$. 
    We can choose any $P_{0,\zeta{+}1}$-name
     $\dot{\mathcal D}_{0,\zeta}$-name of a Ramsey
     ultrafilter on $\omega$. The family 
      $\{ \dot a_\gamma : \gamma \in T_\beta\}$ will play
      the role of $\dot{\mathcal A}_\beta$ in the statement
      of Lemma \ref{usingL}, and we let
       $\{ \dot{\mathcal D}_{\beta,\zeta} : 0<\beta < \kappa\}$
       be the sequence as supplied in Lemma \ref{usingL}.

       Now assume that $\alpha < \kappa$ and that
        $\{\gamma_i : i<m\} \subset E\cap \zeta \setminus
         \operatorname{supp}(P_{\alpha,\zeta{+}1})$.
         Let $\{ \dot F_\ell : \ell\in \omega\}$ be any
         sequence of $P_{\alpha,\zeta+2}$-names
         of pairwise disjoint elements of $[\omega]^k$ for
         some $k\in\omega$. We must find a sufficiently
         large $\beta<\kappa$ so that $P_{\beta,\zeta+2}$
         forces that $\dot a_{\gamma_0}\cup
         \cdots \cup \dot a_{\gamma_{m-1}}$ is disjoint
         from $\dot F_\ell$ for some $\ell\in\omega$. 
         Let $\{ \beta_j : j <\bar m\}$ be the set (listed
         in increasing order) of
          $\beta<\kappa$ such that $T_\beta\cap
          \{\gamma_i : i<m\}$ is not empty
          and let $\beta_m = \beta_{m{-}1}+1$.
          By re-indexing
          we can assume  there is a sequence
           $\{ m_j  : j\leq \bar m\}\subset m{+}1$ so that
            $\gamma_i \in T_{\beta_j}$ for $ m_j\leq i < m_{j+1}$.
    Although $P_{\beta,\zeta{+}2} = P_{\beta,\zeta{+}1}$ for
    values of $\beta < \iota_\zeta$,  
    we will let $\bar P_{\beta,\zeta{+}2} = P_{\beta,\zeta{+}1}
    *_{\zeta{+}1} \mathbb L(\dot{\mathcal D}_{\beta,\zeta})$ for
     $\beta < \iota_\zeta$, and for consistent notation,
      let $\bar P_{\beta,\zeta{+}2}  = P_{\beta,\zeta{+}2}$ for
       $\iota_\zeta\leq\beta <\kappa$. We note
       that  $\{ \dot F_\ell : \ell\in \omega\}$ is also  
         sequence of $\bar P_{\alpha,\zeta+2}$-names
         of pairwise disjoint elements of $[\omega]^k$. 
       
          For each $j<\bar m$, let $\dot L_{j+1}$
          be the $\bar P_{\beta_j{+}1,\zeta+2}$-name of
          those $\ell$ such that $\dot F_\ell$ is disjoint from
           $\bigcup \{ \dot a_{\gamma_i}  : i<m_{j{+}1}\}$.
           It follows, by induction on $j<\bar m$,
            that  $\bar P_{\beta_j{+}1,\zeta{+}2}$ forces
            that 
            $\dot L_{j+1}$ is infinite since $\bar P_{\beta_j{+}1,\zeta{+}2}$
            forces that $\{\dot a_{\gamma_i} : m_j\leq i < m_{j{+}1}\}$
            is thin over the forcing extension by
             $\bar P_{\beta_j,\zeta{+}2}$. It now follows
             $\bar P_{\beta_m, \zeta {+}2}$ forces
             that $\{ \dot a_{\gamma_i} : i< m\}$ is thin
             over the forcing extension by $\bar P_{\alpha,\zeta{+}2}$.
 If $\beta_m< \iota_\zeta$, let $\beta = \iota_\zeta$, otherwise,
  let $\beta = \beta_m$. It follows that $P_{\beta,\zeta{+}2}$
  forces that $\{ \dot a_{\gamma_i} : i<m\}$ is thin over
  the forcing extension by $ P_{\alpha,\zeta{+}2}\cprec
   \bar P_{\alpha,\zeta{+}2}$.
   This completes the verification of Claim (e) for the case
    $\eta = \zeta{+}2$ and we now turn to the final case
    of $\eta = \zeta{+}3$. 
    
    We have chosen the pair $(\alpha_\zeta,\xi_\zeta)$ when
     choosing $\iota_\zeta$.  Let $\dot Q_{\iota_\zeta,\zeta{+}2}$
     be the $P_{\iota_\zeta,\zeta{+}2}$-name of the 
     finite support product of
     all posets of the form $\mathbb M(\mathcal F)$ 
     where $\mathcal F$ is a family of fewer than 
     $\kappa$ canonical
      $P_{\alpha_\zeta,\xi_\zeta}$-names of subsets
      of $\omega$ that is forced to have the sfip. Since
       $P_{\alpha_\zeta,\xi_\zeta}\in H(\lambda^+)$ 
       the set of all such families $\mathcal F$ is an
       element of $H(\lambda^+)$.  This 
       is our value of $\dot Q_{\iota_\zeta,\zeta{+}2}$
       as in condition (9) for the
       definition of $P_{\beta,\zeta{+}3}$  
       for all $\beta<\kappa$. The fact that Claim (e)
       holds in this case follows immediately from
        the induction hypothesis and Proposition \ref{nullstep}. 
     We also note that $P_{\iota_\zeta,\zeta{+}3}$ forces
     that every family of fewer than $\kappa$
     many canonical $P_{\alpha_\zeta,\xi_\zeta}$-names
     that is forced to have the sfip  is also forced,
     by $P_{\kappa,\zeta{+}3}$ to have 
     a pseudo-intersection. This means that for values
     of $\zeta'>\zeta$ with $o(\acc(C_\zeta))=\kappa$,
       the pair $(\alpha_\zeta,\xi_\zeta)$
     will be lexicographically strictly smaller than the choice
     for $\zeta'$. In other words,
      the family $\{ (\xi_\zeta , \alpha_\zeta) : \zeta<\kappa^+,
       \operatorname{cf}(\zeta)=\kappa\}$ is 
       strictly increasing in the lexicographic ordering.
     
     Now we can verify that $P_{\kappa,\kappa^+}$ forces
     that $\mathfrak p \geq \kappa$.  
 If it does not, then there is  a $\delta<\kappa$
 and a family,   
   $\{ \dot y_\gamma : \gamma < \delta\}$ 
      of canonical $P_{\kappa,\kappa^+}$-names of subsets
      of $\omega$ with some $p\in P_{\kappa,\kappa^+}$
      forcing that the family has sfip
      but
      has no pseudo-intersection. 
      By an easy modification of the names, we
      can assume that every condition in $P_{\kappa,\kappa^+}$
      forces that 
      the family
       $\{ \dot y_\gamma : \gamma<\delta\}$ is forced to have sfip.
 Choose any $\xi<\kappa^+$ so that $p\in P_{\kappa,\xi}$
 and 
 every $\dot y_\gamma$ is a $P_{\kappa,\xi}$-name.
Choose 
 $\alpha <\kappa$ large enough 
 so that $p\in P_{\alpha,\xi}$,
 $\iota(\bar\zeta,\xi) $,
 and each $\alpha_\gamma$ ($\gamma<\delta$) is less
 than $\alpha$. It follows that $\dot y_\gamma$ is
 a $P_{\alpha,\xi}$-name for all $\gamma<\delta$. 
 Since the family $\{ (\xi_\zeta , \alpha_\zeta) : \zeta<\kappa^+,
       \operatorname{cf}(\zeta)=\kappa\}$ is 
       strictly increasing in the lexicographic ordering,
       and this ordering on $\kappa^+\ttimes\kappa$ has order
       type $\kappa^+$, there is a minimal
        $\zeta<\kappa^+$ (with $\operatorname{cf}(\zeta)=\kappa$)
        such that $(\xi,\alpha)\leq (\xi_\zeta,\alpha_\xi)$. 
        By the assumption on $(\alpha,\xi)$, $(\xi_\zeta,\alpha_\xi)$
      will be chosen to equal $(\xi,\alpha)$.   
 One of the factors of the poset $\dot Q_{\iota_\zeta,\zeta{+}2}$
 will be chosen to be $\mathbb M(\{\dot y_\gamma : \gamma<\delta\})$.
 This proves that $P_{\kappa,\zeta{+}3}$ forces 
  $\{ \dot y_\gamma : \gamma<\delta\}$ does have a 
  pseudo-intersection.
  
  It should be clear from condition (8) in the construction
  that $P_{\kappa,\kappa^+}$ forces that $\mathfrak s\geq\kappa^+$.
    To finish the proof we must show that $P_{\kappa,\kappa^+}$
    forces that $\{ \dot a_\gamma : \gamma\in E\}$ is
     $(\kappa,\kappa^+)$-shattering. Since $\dot a_{\gamma}$
     is forced to be a Cohen real over the extension by 
      $P_{\kappa,\gamma}$, condition (2) in the Definition
      \ref{kappalambda}
      of $(\kappa,\kappa^+)$-shattering holds. Finally,
       we verify condition (1) of Definition \ref{kappalambda}. Choose
       any $P_{\kappa,\kappa^+}$-name $\dot b$ of an infinite subset
       of $\omega$. Choose any $(\alpha,\xi)\in \kappa\ttimes\kappa^+$
       so that $\dot b$ is a $P_{\alpha,\xi}$-name. The
       set $E\cap \operatorname{supp}(P_{\alpha,\xi})$ 
       has cardinality less than $\kappa$. For any $\gamma\in E\setminus
       \operatorname{supp}(P_{\alpha,\xi})$, there is a $(\beta,\zeta)\in 
        \kappa\ttimes \kappa^+$ such that $\{\dot a_\gamma\}$
        is thin over the forcing extension by $P_{\alpha,\xi}$. 
        It follows trivially that $P_{\beta,\zeta}$ forces
        that $\dot b$ is not a (mod finite) subset of $\dot
         a_\gamma$. 
 \end{proof}

\section{Questions}

\begin{enumerate}
\item Is it consistent to have $\omega_1<\mathfrak h < \mathfrak b <
  \mathfrak s$ and $\mathfrak{c}$ regular? 
\item  Is it consistent to have $\omega_1 < \mathfrak h < \mathfrak s
  < \mathfrak b$ and $\mathfrak{c}$ regular?
\end{enumerate}

\begin{bibdiv}

\def\cprime{$'$} 

\begin{biblist}

\bib{BPS80}{article}{
   author={Balcar, Bohuslav},
   author={Pelant, Jan},
   author={Simon, Petr},
   title={The space of ultrafilters on ${\bf N}$ covered by nowhere dense
   sets},
   journal={Fund. Math.},
   volume={110},
   date={1980},
   number={1},
   pages={11--24},
   issn={0016-2736},
   review={\MR{600576}},
   doi={10.4064/fm-110-1-11-24},
}
	
\bib{BaumDordal}{article}{
   author={Baumgartner, James E.},
   author={Dordal, Peter},
   title={Adjoining dominating functions},
   journal={J. Symbolic Logic},
   volume={50},
   date={1985},
   number={1},
   pages={94--101},
   issn={0022-4812},
   review={\MR{780528 (86i:03064)}},
   doi={10.2307/2273792},
}
\bib{Bla89}{article}{
   author={Blass, Andreas},
   title={Applications of superperfect forcing and its relatives},
   conference={
      title={Set theory and its applications},
      address={Toronto, ON},
      date={1987},
   },
   book={
      series={Lecture Notes in Math.},
      volume={1401},
      publisher={Springer, Berlin},
   },
   date={1989},
   pages={18--40},
   review={\MR{1031763}},
   doi={10.1007/BFb0097329},
}
\bib{BlassShelah}{article}{
   author={Blass, Andreas},
   author={Shelah, Saharon},
   title={Ultrafilters with small generating sets},
   journal={Israel J. Math.},
   volume={65},
   date={1989},
   number={3},
   pages={259--271},
   issn={0021-2172},
   review={\MR{1005010 (90e:03057)}},
   doi={10.1007/BF02764864},
}	
	
\bib{BrendleFischer}{article}{
   author={Brendle, J{\"o}rg},
   author={Fischer, Vera},
   title={Mad families, splitting families and large continuum},
   journal={J. Symbolic Logic},
   volume={76},
   date={2011},
   number={1},
   pages={198--208},
   issn={0022-4812},
   review={\MR{2791343 (2012d:03113)}},
   doi={10.2178/jsl/1294170995},
}

\bib{BrendleRaghavan}{article}{
   author={Brendle, J{\"o}rg},
   author={Raghavan, Dilip},
   title={Bounding, splitting, and almost disjointness},
   journal={Ann. Pure Appl. Logic},
   volume={165},
   date={2014},
   number={2},
   pages={631--651},
   issn={0168-0072},
   review={\MR{3129732}},
   doi={10.1016/j.apal.2013.09.002},
}

\bib{vDHandbook}{collection}{
   title={Handbook of set-theoretic topology},
   editor={Kunen, Kenneth},
   editor={Vaughan, Jerry E.},
   publisher={North-Holland Publishing Co., Amsterdam},
   date={1984},
   pages={vii+1273},
   isbn={0-444-86580-2},
   review={\MR{776619 (85k:54001)}},
}
	\bib{ssingular}{article}{
   author={Dow, Alan},
   author={Shelah, Saharon},
   title={On the cofinality of the splitting number},
   journal={Indag. Math. (N.S.)},
   volume={29},
   date={2018},
   number={1},
   pages={382--395},
   issn={0019-3577},
   review={\MR{3739621}},
   doi={10.1016/j.indag.2017.01.010},
}
\bib{Shelahpseudo}{article}{
   author={Dow, Alan},
   author={Shelah, Saharon},
   title={Pseudo P-points and splitting number},
   journal={Arch. Math. Logic},
   volume={58},
   date={2019},
   number={7-8},
   pages={1005--1027},
   issn={0933-5846},
   review={\MR{4003647}},
   doi={10.1007/s00153-019-00674-x},
}

\bib{Fischer18}{article}{
   author={Fischer, Vera},
   author={Friedman, Sy D.},
   author={Mej\'{\i}a, Diego A.},
   author={Montoya, Diana C.},
   title={Coherent systems of finite support iterations},
   journal={J. Symb. Log.},
   volume={83},
   date={2018},
   number={1},
   pages={208--236},
   issn={0022-4812},
   review={\MR{3796283}},
   doi={10.1017/jsl.2017.20},
}
		
\bib{Fischer17}{article}{
   author={Fischer, Vera},
   author={Mejia, Diego Alejandro},
   title={Splitting, bounding, and almost disjointness can be quite
   different},
   journal={Canad. J. Math.},
   volume={69},
   date={2017},
   number={3},
   pages={502--531},
   issn={0008-414X},
   review={\MR{3679685}},
   doi={10.4153/CJM-2016-021-8},
}

\bib{FischerSteprans}{article}{
   author={Fischer, Vera},
   author={Stepr{\=a}ns, Juris},
   title={The consistency of $\germ b=\kappa$ and $\germ s=\kappa^+$},
   journal={Fund. Math.},
   volume={201},
   date={2008},
   number={3},
   pages={283--293},
   issn={0016-2736},
   review={\MR{2457482 (2009j:03078)}},
   doi={10.4064/fm201-3-5},
}
 \bib{SouslinForcing}{article}{
   author={Ihoda, Jaime I.},
   author={Shelah, Saharon},
   title={Souslin forcing},
   journal={J. Symbolic Logic},
   volume={53},
   date={1988},
   number={4},
   pages={1188--1207},
   issn={0022-4812},
   review={\MR{973109}},
   doi={10.2307/2274613},
}
 
 \bib{Jech2}{book}{
   author={Jech, Thomas},
   title={Set theory},
   series={Springer Monographs in Mathematics},
   note={The third millennium edition, revised and expanded},
   publisher={Springer-Verlag, Berlin},
   date={2003},
   pages={xiv+769},
   isbn={3-540-44085-2},
   review={\MR{1940513 (2004g:03071)}},
}

\bib{Diego13}{article}{
   author={Mej\'{\i}a, Diego Alejandro},
   title={Matrix iterations and Cichon's diagram},
   journal={Arch. Math. Logic},
   volume={52},
   date={2013},
   number={3-4},
   pages={261--278},
   issn={0933-5846},
   review={\MR{3047455}},
   doi={10.1007/s00153-012-0315-6},
}
 
\bib{Boulder}{article}{
   author={Shelah, Saharon},
   title={On cardinal invariants of the continuum},
   conference={
      title={Axiomatic set theory},
      address={Boulder, Colo.},
      date={1983},
   },
   book={
      series={Contemp. Math.},
      volume={31},
      publisher={Amer. Math. Soc., Providence, RI},
   },
   date={1984},
   pages={183--207},
   review={\MR{763901 (86b:03064)}},
   doi={10.1090/conm/031/763901},
}

\bib{Shelah2011}{article}{
   author={Shelah, Saharon},
   title={The character spectrum of $\beta(\Bbb N)$},
   journal={Topology Appl.},
   volume={158},
   date={2011},
   number={18},
   pages={2535--2555},
   issn={0166-8641},
   review={\MR{2847327}},
   doi={10.1016/j.topol.2011.08.014},
}

\end{biblist}
\end{bibdiv}

\end{document}